\documentclass[a4paper]{article}
\usepackage{amscd}
\usepackage{tikz}

\usepackage{amsfonts,amsmath,amsxtra,color,mathrsfs}
\input xy
\xyoption{all}
\usepackage[all,knot]{xy}

\newtheorem{theorem}{Theorem}[section]
\newtheorem{corollary}{Corollary}[section]
\newtheorem{lemma}{Lemma}[section]
\newtheorem{proposition}{Proposition}[section]
\newtheorem{problem}{Problem}[section]

\newenvironment{definition}
{\smallskip\noindent{\bf Definition\/}:}{\smallskip\par}
\newenvironment{proof}
{\noindent{\bf Proof\/}}{{ $\Box$}\smallskip\par}

\newenvironment{example}
{\smallskip\noindent{\bf Example\/}:}{\smallskip\par}
\newenvironment{remark}
{\smallskip\noindent{\bf Remark\/}:}{\smallskip\par}

\sloppy

\title{On Young diagrams, flips and cluster algebras of type A}
\author{Mikhail Gorsky}

\begin{document}
\maketitle

\begin{abstract}
We give a new simple description of the canonical bijection between the set of triangulations of $n$-gon and some set of Young diagrams. Using this description, we find flip transformations
on this set of Young diagrams which correspond to the edges of the
associahedron. This construction is generalized on the set of all Young diagrams and the corresponding infinite-dimensional associahedron is defined. We consider its relation to the properly defined
infinite-type version of the cluster algebras of type A and check some properties of these algebras inherited from their finite-type counterparts. We investigate links between these algebras and cluster categories of infinite Dynkin type $A_\infty$ introduced by Holm and Jorgensen.
\end{abstract}

\vspace{2cm}

\tableofcontents

\vspace{3cm}

\section {Introduction}
It is well-known (see, for instance, \cite{Stan}) that the Catalan numbers count both 
the triangulations of the (n+2)-gon  and the Dyck paths of the length 2n. The Dyck
paths can be considered as the paths from the point (0,-n) to the point (n,0) consisted
of the vectors (0,1) and (1.0) which never go below the line $y=x-n,$ hence they bound the Young diagrams
in the triangle formed by coordinate axes and the line $y=x-n$. Since there exists the canonical bijection \cite{Stan} between the triangulations and paths (which goes through the binary root trees with $(2n+1)$ vertices),
we can consider also the canonical bijection between the triangulations and this class of the Young diagrams (\cite{Stan}, Exercise 6.19, vv). In Section 2.1 we shall present the simple 
description of such bijection (without appealing to trees and Dyck paths) which shall be very helpful in the further analysis.

On the other hand (see \cite{Lee}) the triangulations of the (n+2)-gon enumerate the vertices of the
(n-1)-dimensional associahedron (Stasheff polytope) $As^{n-1}$ introduced in \cite{Stas}.
The edges of $As^{n-1}$ correspond to the \textit{flips} that is operation of  
the unique change of diagonal which  maps between two triangulations. Therefore
we can define the n-flips at Young diagram as follows: two Young diagrams
are related by n-flip if both of them do not intersect $y=x-n$ and 
corresponding two vertices of $As^{n-1}$ are connected by the edge. The main
result of the paper is 
 
\begin{theorem} \label{indep}
If there is the $n$-flip between two Young diagrams $A$ and $B$, there are 
also $k$-flips between them if both $A$ and $B$ are above  the line $y=x-k$,
in particular, for all $k>n$.
\end{theorem}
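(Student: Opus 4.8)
The plan is to show that, transported through the bijection of Section~2.1, a flip becomes a move between Young diagrams that is described entirely in terms of the two diagrams as partitions, the line $y=x-m$ entering only through the requirement that both diagrams lie above it; once this is made precise, the $n$-independence asserted by the theorem is immediate. Write $D(T)$ for the set of diagonals of a triangulation $T$, and, for a Young diagram $Y$ lying above the line $y=x-m$, let $T_m(Y)$ denote the triangulation of the associated polygon attached to $Y$ by the bijection.

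First I would isolate the combinatorial content of a flip on the polygon side: two triangulations $T,T'$ of a fixed polygon are related by a flip if and only if $|D(T)\triangle D(T')|=2$. The forward implication is clear, since a flip deletes one diagonal and inserts another. For the converse, if $D(T)$ and $D(T')$ coincide except in one diagonal, then their common diagonals cut the polygon into triangles together with exactly one quadrilateral, and the two remaining diagonals must be the two diagonals of that quadrilateral, so passing from $T$ to $T'$ is precisely the flip of that quadrilateral. This reduces the whole theorem to a statement about symmetric differences of diagonal sets.

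The main step is a stability property of the bijection under enlarging the staircase. Fix $m<m'$, so that the staircase above $y=x-m$ is contained in the one above $y=x-m'$ and every diagram above $y=x-m$ is also above $y=x-m'$. I claim that there is a natural inclusion of the vertices of the smaller polygon into those of the larger one under which no diagonal of $T_m(Y)$ is destroyed and
\[
D\bigl(T_{m'}(Y)\bigr) = D\bigl(T_m(Y)\bigr) \sqcup E_{m,m'},
\]
where $E_{m,m'}$ is a set of diagonals depending only on $m$ and $m'$ and not on $Y$; geometrically, $T_{m'}(Y)$ is obtained from $T_m(Y)$ by attaching a fixed chain of ears. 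This should be read off directly from the explicit description of Section~2.1: enlarging the staircase while keeping $Y$ fixed alters the boundary path of $Y$ only in a way that depends on $m$ and $m'$ and not on $Y$ (the new portions run along the fixed boundary of the staircase), and this alteration corresponds on the triangulation side to inserting a fixed chain of ears, none of which removes an old diagonal.

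Granting the stability property, the theorem follows by bookkeeping. Suppose $A$ and $B$ are related by an $n$-flip, and let $k$ be any index for which $A$ and $B$ both lie above $y=x-k$; put $m=\min(n,k)$ and $M=\max(n,k)$, so that $A$ and $B$ both lie above $y=x-m$. By stability, $D(T_M(A))=D(T_m(A))\sqcup E_{m,M}$ and $D(T_M(B))=D(T_m(B))\sqcup E_{m,M}$ with the same set $E_{m,M}$, hence
\[
\bigl|D(T_M(A))\triangle D(T_M(B))\bigr| = \bigl|D(T_m(A))\triangle D(T_m(B))\bigr| .
\]
Combined with the first step, this shows that $A$ and $B$ admit an $m$-flip if and only if they admit an $M$-flip; applied to the given $n$-flip it produces a $k$-flip between $A$ and $B$. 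Finally, when $k>n$ the requirement that $A$ and $B$ lie above $y=x-k$ is automatic, because the staircase for $k$ contains that for $n$, which gives the ``in particular'' clause. The one genuinely nontrivial point in this scheme is the stability property of the third paragraph, and within it the description of the vertex identification near the base edge of the polygon (equivalently, near the sides of the staircase); everything else is routine.
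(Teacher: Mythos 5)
Your proposal is correct and follows essentially the same route as the paper: the key ``stability property'' you isolate is exactly the paper's observation that $\Lambda_{n+2}^{-1}(A)=\Lambda_{l+2}^{-1}(A)\cup\{(0,l+1),\ldots,(0,n)\}$, i.e. enlarging the polygon adds a fixed fan of diagonals at vertex $0$ independent of the diagram, and your symmetric-difference bookkeeping is just a slightly more formal packaging of the paper's ``flip in the same diagonal'' conclusion.
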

 
 The Theorem \ref{indep} implies that the number $n$ can be omitted and one can say
 that two Young diagrams are connected by flip if the corresponding vertices
 of the associahedron of the large dimensions are connected by the edge. In Section
 2.2 we suggest the simple explicit flips between two diagrams
 without appealing to the bijection mentioned above which can be considered
 as the operation on the set of Young diagrams. It is known that $As^{n-1}$
 is embedded as a facet into $As^{n}$ hence we can define $As^{\infty}$ as the direct limit of a filtration
 $$As^0 \hookrightarrow As^1 \hookrightarrow As^2 \hookrightarrow \ldots As^{\infty}$$ 
 and describe it in terms of Young diagrams. 
The dihedral group $D_n$ acts naturally at the associahedron $As^n$ hence we can define its action commuting with flips at the correspondent set of Young diagrams which
 the Section 2.3 is devoted to. Actually the work on this paper have been started by attempts to understand this action.
 
 In Section 3 we consider the possible connections of our construction with the cluster algebras
 which were introduced by Fomin and Zelevinsky in \cite{FZ1} and have been investigated intensively last years. They are naturally related to the quiver representations, Poisson structures,
 integrable systems, e.t.c. (see, for example, surveys \cite{FR} and \cite{Kel}). We shall provide a description of the cluster algebras
 related to $As^{\infty}$ which shall be denoted as cluster algebras of type $A_{\infty}$. 
 
 The simplest consequence of our flip construction is the nontrivial symmetry on the
 variety of the vertices of $As^n$ corresponding to the transposition of Young
 diagrams. Being extended to the $As^{\infty}$ case it yields the unexpected involution which 
 interchanges in each seed of some cluster algebras $A_{\infty}$ the frozen and 
 cluster variables. The rigorous description of this involution deserves the
 additional investigation. It will be probably more natural to consider this involution for quantum cluster algebras introduced by Berenstein and Zelevinsky in \cite{BZ}. Moreover, in 3.4 we check some properties of algebras $A_{\infty}$
 inherited from algebras of type $A_n.$ This subsection is related to work \cite{JP} (see also \cite{HJ}) where analogous results are given (inter alia) in terms of cluster categories. Our approach to algebras of type $A_{\infty}$ can be naturally generalized to types $B_{\infty}, C_{\infty}$ and $D_{\infty}$ of cluster algebras with the same properties, while question of constructing correspondent cluster categories is yet open.
 
 In our paper we have found the relation between the Stasheff polytopes and Young diagrams.
 It is known that the generating function of the number of k-dimensional faces of the
 n-dimensional associahedron obeys the Hopf equation (see \cite{B}, \cite{BK}). On the other hand the Hopf
 equation emerges while considering the representations of the algebra $SU(N)$ at $N \rightarrow \infty$  
 in terms of Young diagrams (\cite{GG}, \cite{GM}). It would be interesting to clarify the relation between
 two appearances of this universal equation using our approach.
 
 Since the cluster algebras $B_n,C_n,D_n$ correspond to some polytopes  whose 
 combinatorial structures are described in terms of triangulations (\cite{FZ3}) as well, it would be interesting
 to extend our analysis to these series.
 
 The author is grateful to his scientific supervisor V. M. Buchstaber who has attracted his attention to the associahedra
 and to M.~Z.~Shapiro, G.~I.~Olshanskii, A.~V.~Zelevinsky, E.~A.~Gorsky, A.~A.~Gaifullin and A.~V.~Fonarev for useful remarks.

\section{Flips, triangulations and Young diagrams}

\subsection{Triangulations and Young diagrams}

It will be useful to identify partition $(d_1, d_2, d_3, \ldots),$ where 
\newline $d_1 \geq d_2 \geq d_3 \geq \ldots, \quad$ with Young diagram with rows of lengths $d_1, d_2, d_3, \ldots$.

\begin{definition}
We put each Young diagram into IV quadrant and define $\mathbb{Y}_n$ as the set of all Young diagrams lying inside triangle formed by coordinate axis and line $y = x - n.$
\end{definition}

\begin{tikzpicture}[line width=0.6pt]
  \draw [<->,thick] (1,4) node (yaxis) [above] {$y$}
     |- (5,3) node (xaxis) [right] {$x$};
  \draw [thick] (1,0) -- (1,3);
  \draw [thick] (0,3) -- (1,3);
  \draw (3,3) -- (3,2.5) -- (2.5,2.5) -- (2.5,2) -- (2,2) -- (2,1.5) -- (1,1.5);
  \draw (1.5,3) -- (1.5,1.5);
  \draw (2,3) --(2,2);
  \draw (2.5,3) -- (2.5,2.5);
  \draw (1,2.5) -- (2.5,2.5);
  \draw (1,2) -- (2.5,2);
  \draw (1,2.5) -- (1,2);
  \draw (2.5,2.5) -- (2.5,2);
  \draw [dashed] (3.5,3) -- node [below right] {$y = x - 5$}(1,0.5);
\end{tikzpicture}

Consider regular $n$-gon and enumerate its vertices by nonnegative numbers $\left\{ 0, 1, \ldots, (n - 1) \right\}$ in counter-clockwise order.

\begin{definition}
The \textit{triangulation} of polygon is set of its diagonals such that 
\begin{itemize}
\item[1)] Any two of them either have common vertex or do not intersect; 
\item[2)] These diagonals divide polygon into triangles.
\end{itemize}

Denote by $T_n$ the set of all triangulations of the convex $n$-gon. 

We denote by \textit{"tail"} and \textit{"head"} of every diagonal the smallest and the biggest of its ends respectively.
\end{definition}

\begin{tikzpicture}[line width=0.6pt]
  \draw (0,1) node [left] {$2$} -- 
        (0,2) node [left] {$1$} -- 
        (1,3) node [above] {$0$} -- 
        (2,3) node [above] {$7$} -- 
        (3,2) node [right] {$6$} -- 
        (3,1) node [right] {$5$} -- 
        (2,0) node [below] {$4$} -- 
        (1,0) node [below] {$3$} -- 
        (0,1) -- (2,0) -- (3,2) -- (1,3) -- (0,1) -- (3,2);
\end{tikzpicture}

\begin{definition}
We correspond to each triangulation $A \in T_{n+2}$ the collection of integer numbers $\Lambda_{n+2}(A) = (\lambda_1(A), \lambda_2(A), \ldots, \lambda_{n-1}(A)),$ where each $\lambda_i(A)$ is the tail of some diagonal and $\lambda_i(A)$ are decreasingly ordered. We will omit index ${n+2}$ when its value would be clear.
\end{definition}

\begin{lemma}
For every pair of distinct triangulations $A, B \in T_n$ $\Lambda(A) \neq \Lambda(B)$.
\end{lemma}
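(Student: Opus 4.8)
The plan is to show that the triangulation $A$ can be reconstructed from the tuple $\Lambda(A)$, which yields injectivity at once. I argue by induction on $n$; the base cases $n\le 1$ are trivial since then $|T_{n+2}|=1$ and $\Lambda$ is the empty tuple. For the inductive step, write $m=\lambda_1(A)$ for the largest tail appearing in $\Lambda(A)$ and let $\mu\ge 1$ be the number of times $m$ occurs in $\Lambda(A)$, i.e.\ the number of diagonals of $A$ with tail $m$; both $m$ and $\mu$ are read directly off the tuple $\Lambda(A)$.

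The heart of the argument is a ``fan lemma'': the diagonals of $A$ with tail $m$ are exactly $(m,m+2),(m,m+3),\dots,(m,m+\mu+1)$, and the triangles $(m,m+i,m+i+1)$ for $i=1,\dots,\mu$ all belong to $A$. To prove this, observe first that since $m$ is the maximal tail, no diagonal of $A$ joins two vertices of $\{m+1,\dots,n+1\}$. Let $m+1=q_0<q_1<\dots<q_\mu$ be the vertices of $\{m+1,\dots,n+1\}$ joined to $m$ by an edge or a diagonal of $A$, so that $q_1,\dots,q_\mu$ are the heads of the tail-$m$ diagonals. The edge $(m,q_0)$ together with the diagonals $(m,q_1),\dots,(m,q_\mu)$ cut off, for each $i=0,\dots,\mu-1$, a sub-polygon $S_i$ on the vertices $\{m\}\cup\{q_i,q_i+1,\dots,q_{i+1}\}$. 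Inside $S_i$ the vertex $m$ is joined only to $q_i$ and $q_{i+1}$ (all its other neighbours in $A$ lie outside $S_i$), so $m$ has degree two in the induced triangulation and hence lies in a single triangle, which must be $(m,q_i,q_{i+1})$. If $q_{i+1}>q_i+1$ this forces $(q_i,q_{i+1})$ to be a diagonal of $A$ with tail $q_i\ge m+1$, contradicting the maximality of $m$; therefore $q_{i+1}=q_i+1$ for every $i$, and the fan lemma follows.

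Granted the fan lemma, the vertices $m+1,\dots,m+\mu$ may be removed one at a time, each of them being an ear at the moment of its removal, leaving a triangulation of the $(n+2-\mu)$-gon on the vertex set $\{0,\dots,m\}\cup\{m+\mu+1,\dots,n+1\}$; relabelling these vertices monotonically onto $\{0,\dots,n+1-\mu\}$ produces $A'\in T_{n+2-\mu}$, whose diagonals are precisely those of $A$ other than the $\mu$ removed ones. Since $m$ is the maximal tail of $A$ and all tail-$m$ diagonals have been removed, every surviving diagonal has tail at most $m-1$, and the relabelling does not alter such tails; hence $\Lambda(A)$ is obtained from $\Lambda(A')$ by prepending $\mu$ copies of $m$. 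Thus $\Lambda(A)$ determines $m$, $\mu$, and $\Lambda(A')$; by the induction hypothesis $\Lambda(A')$ determines $A'$, and then $A$ is recovered uniquely from $A'$, $m$ and $\mu$ by re-inserting $\mu$ vertices immediately after $m$ and re-attaching the fan $(m,m+2),\dots,(m,m+\mu+1)$. Consequently $A\mapsto\Lambda(A)$ is injective, which is the assertion.

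I expect the fan lemma to be the only delicate point: specifically, the verification that inside each $S_i$ the vertex $m$ is joined only to $q_i$ and $q_{i+1}$ (so that it sits in a unique triangle) and the combined use of non-crossing and maximality of $m$ to exclude an interior diagonal $(q_i,q_{i+1})$. Everything after that is bookkeeping about ear removals and relabellings. One should also keep an eye on the degenerate extremes — e.g.\ $m=0$, where $(m,n+1)$ is a polygon edge rather than a diagonal, and the very small polygons — but these cause no difficulty.
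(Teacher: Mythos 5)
Your proof is correct and follows essentially the same route as the paper: induction on the number of vertices, using the maximality of the largest tail $m$ to force an ear at $m$, deleting it, and reducing to a smaller polygon whose tail sequence is read off from $\Lambda(A)$. The only difference is that you establish the full fan structure and remove all $\mu$ tail-$m$ diagonals at once, whereas the paper removes the single diagonal $(m,m+2)$ and re-applies the induction hypothesis; this is a cosmetic variation (your fan lemma is, in effect, a detailed justification of the paper's ``it is clear that there is a diagonal $(l,l+2)$'').
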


\begin{proof}
Prove this lemma by induction. For $n = 3,4$ the statement is obvious. Suppose that it holds for $n=k.$ Suppose that for $A, B \in T_{k+1}, A \neq B$ $\quad$ $\Lambda_{k+1}(A) = \Lambda_{k+1}(B).$ Let $\lambda_1(A) = \lambda_1(B) = l.$ It is clear that in both $A$ and $B$ there is a diagonal $(l, l + 2).$ Consider triangulations $A^{'} = A \backslash \left\{(l,l+2)\right\}, B^{'} = B \backslash \left\{(l,l+2)\right\}$ of $(k-1)-$gon with vertices $\left\{0, 1, \ldots, l, l+2, l+3, \ldots, k-1\right\}.$ Since $\Lambda_{k}(A^{'}) = (\lambda_2(A), \ldots, \lambda_{k-2}(A)) = \Lambda_{k}(B^{'}),$ then by assumption $A^{'} = B^{'},$ consequently $A = B,$ q.e.d.
\end{proof}

We will enumerate diagonals of triangulation as follows: for two diagonals $(a,b)$ and $(c,d)$ (here $a < b, c < d$) $\quad (a,b)$ has bigger number if and only if either $a < c$ or 
$\left\{ \begin{matrix}
a = c
\\ b > d
\end{matrix} \right.$. 
Diagonal $(l, l+2)$ from the last proof will have number $1.$ It is clear that the tail of diagonal with number $k$ will be equal to $\lambda_k.$

\begin{proposition}\label{bij}
$\Lambda_{n+2}$ defines a bijection between $T_{n+2}$ and $\mathbb{Y}_n$.
\end{proposition}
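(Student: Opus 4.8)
The plan is to establish that $\Lambda_{n+2}$ is a bijection by combining the injectivity already proved in the Lemma with a counting argument together with a verification that the image actually lands in $\mathbb{Y}_n$. Since the previous Lemma shows $\Lambda$ is injective on $T_{n+2}$, and since it is classical (Catalan) that $|T_{n+2}| = |\mathbb{Y}_n| = C_n$, it suffices to prove that $\Lambda_{n+2}(A) \in \mathbb{Y}_n$ for every triangulation $A$; equivalently, that $\Lambda_{n+2}(A)$ is a weakly decreasing sequence of length $n-1$ whose corresponding Young diagram fits inside the triangle bounded by the coordinate axes and the line $y = x - n$. The weak-decreasing property and the length $n-1$ are already built into the definition of $\Lambda_{n+2}$ (a triangulation of the $(n+2)$-gon has exactly $n-1$ diagonals). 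So the heart of the matter is the fit condition: writing $\Lambda_{n+2}(A) = (\lambda_1 \geq \lambda_2 \geq \cdots \geq \lambda_{n-1})$, one must show $\lambda_1 \leq n-1$ (width bound) and, more importantly, the staircase bound $\lambda_i \leq n - i$, or in the transposed/path language that the diagram stays above $y = x-n$.

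First I would set up the bound on an individual $\lambda_i$. Recall the diagonal-numbering convention: diagonal number $k$ is the $k$-th one in the order where $(a,b) > (c,d)$ iff $a < c$, or $a = c$ and $b > d$; and $\lambda_k$ is defined to be the tail of the diagonal with number $k$. The key combinatorial observation is: among the diagonals whose tail is a fixed vertex $a$, there can be at most $n - a - 1$ of them that are ``large'' in a suitable sense, and more to the point, a diagonal with tail $a$ must have head in $\{a+2, \dots, n+1\}$ (indices mod $n+2$, but with the canonical labelling $0,\dots,n+1$ the head is genuinely $> a+1$), and distinct diagonals in a triangulation sharing the tail $a$ have distinct heads. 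Using this, I would argue that once $\lambda_i = a$, there are at most $n+1-a-1 = n-a$ possible diagonals with tail $\geq$ the ones counted so far... — cleaner is to run the same induction as in the Lemma.

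So the main step, and the one I expect to be the real obstacle, is the inductive proof of the staircase inequality. Induct on $n$. Base cases $n = 1, 2$ are immediate. For the step, given $A \in T_{n+2}$, let $\lambda_1 = l$ be the tail of the ``number $1$'' diagonal; as in the Lemma this is an ear diagonal of the form $(l, l+2)$, and cutting it off yields $A' \in T_{n+1}$ on the vertex set $\{0,\dots,l,l+2,\dots,n+1\}$ with $\Lambda_{n+1}(A') = (\lambda_2, \dots, \lambda_{n-1})$ (after the obvious relabelling of vertices $l+2, \dots, n+1$ down by one). By the inductive hypothesis $\lambda_{i+1} \leq (n-1) - i = n - i - 1$ in the relabelled polygon; I then need to check that relabelling can only decrease the tail values by at most the right amount, so that back in the original polygon $\lambda_{i+1} \leq n - (i+1)$ still holds, and separately that $\lambda_1 = l \leq n-1$ (immediate, since the tail of any diagonal of the $(n+2)$-gon is at most $n-1$). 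The bookkeeping around how vertex relabelling interacts with the ordering of diagonals and hence with which $\lambda$ is which is the delicate part; I would handle it by noting that the relabelling $j \mapsto j$ for $j \leq l$ and $j \mapsto j-1$ for $j \geq l+2$ is order-preserving on vertices, hence preserves the numbering convention on the remaining diagonals, so that $\lambda_{i+1}^{A} \in \{\lambda_i^{A'}, \lambda_i^{A'}+1\}$ according to whether the relevant tail is $\leq l$ or $\geq l+2$ — and in the second case $\lambda_i^{A'} \leq n-i-1$ already forces $\lambda_i^{A'}+1 \leq n-i$, which is exactly $\lambda_{i+1}^A \leq n-(i+1)$. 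Once the image is shown to lie in $\mathbb{Y}_n$, injectivity (Lemma) plus $|T_{n+2}| = |\mathbb{Y}_n| = C_n$ gives surjectivity, completing the proof.
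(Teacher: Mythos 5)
Your overall route is sound but genuinely different from the paper's on both halves. For the containment $\Lambda_{n+2}(A)\in\mathbb{Y}_n$, the paper does \emph{not} induct: it proves the staircase bound $\lambda_k\le n-k$ directly, by noting that the diagonal numbered $k$ cuts off a sub-polygon containing vertex $0$ with at least $\lambda_k+2$ vertices, which must be triangulated by the at most $n-1-k$ remaining higher-numbered diagonals; your ear-cutting induction replaces that counting argument and is arguably cleaner. For surjectivity, the paper constructs the preimage explicitly (peel off the top row $b_1$ of the diagram, invert on $\mathbb{Y}_{k}$ by induction, and glue back the ear $(b_1,b_1+2)$), whereas you invoke injectivity plus $|T_{n+2}|=|\mathbb{Y}_n|=C_n$. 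That is legitimate (both counts are classical and the paper itself cites them), but it buys less: the explicit inverse $\Lambda^{-1}$ is used heavily in the rest of the paper, so the constructive proof is doing extra work you would eventually have to redo.

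There is one concrete slip in your ``delicate part.'' In the case where a remaining diagonal's tail is $\ge l+2$ you get $\lambda_{i+1}^A=\lambda_i^{A'}+1\le (n-1-i)+1=n-i$, and you assert this ``is exactly'' $\lambda_{i+1}^A\le n-(i+1)$; it is not --- it is weaker by $1$, and if that case could occur the induction would not close. The rescue is that the case is vacuous: by the definition of the numbering, diagonal number $1$ has the \emph{maximal} tail $l=\lambda_1$, so every remaining diagonal has tail $\le l$ and is therefore fixed by the relabelling $j\mapsto j$ ($j\le l$), $j\mapsto j-1$ ($j\ge l+2$). Hence $\lambda_{i+1}^A=\lambda_i^{A'}\le (n-1)-i=n-(i+1)$ with no second case at all. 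State that observation explicitly (it is the same fact that forces diagonal number $1$ to be the ear $(l,l+2)$ in the injectivity lemma) and your argument is complete.
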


\begin{proof}
We begin our proof with the following lemma:

\begin{lemma}
For every $A \in T_{n+2}$ and every $k$ $\lambda_k(A) \leq (n - k).$
\end{lemma}

\begin{proof}
Consider in $A$ diagonal $(a, b)$ number $k.$ It separates our $(n+2)-$gon in two parts, one of them contains vertex 0. $A$ divides this part into triangles by diagonals number $(k + 1), (k + 2), \ldots, (n - 1)$ - $(n - 1 - k)$ diagonals in all. On the other hand this part contains vertices $0, 1, \ldots, a, (n-1)$ and probably some others (if $b < (n + 1))$ - $\quad (a + 2)$ vertices at least. Hence we obtain that $(a - 1) \geq (n - 1 - k) \Leftrightarrow a \geq (n - k).$ Since we know that $a = \lambda_k(A),$ it proves our lemma. 
\end{proof}

Now we see that for every $k$ $\quad \lambda_k(A) + k \leq n,$ so $\Lambda_{n+2}(A)$ defines Young diagram from  $\mathbb{Y}_n.$ By Lemma 1 we obtain that this map $A \mapsto \Lambda_{n+2}(A)$ is an injection. We will prove surjectivity by induction. For $n = 1$ the statement is obvious. Suppose that it holds for $n = k.$ Consider diagram $B \in \mathbb{Y}_{k+1}.$ Let $B^{'}$ be a diagram obtained from $B$ by throwing out the first (the biggest) row which length we will denote by $b_1.$ Clearly $B^{'}$ lies in $\mathbb{Y}_k,$ hence by assumption exists $\Lambda_{k+2}^{-1}(B^{'}) \in T_{k+2}.$ Take a $(k+3)-$gon and its truncation by the line through vertices $b_1$ and $(b_1 + 2)$ (it is a $(k+2)-$gon). It is easy to see that $\Lambda_{k+2}^{-1}(B^{'})$ might be considered as triangulation of this truncation. Therefore $A = (\Lambda_{k+2}^{-1}(B^{'}) \cup \left\{(b_1, b_1 + 2)\right\})$ is a triangulation from $T_{k+3}$ and $\Lambda_{k+3}(A)$ is $B^{'}$ with one glued row of length $b_1$ i. e. $B,$ q.e.d.
\end{proof}

\begin{remark}
In \cite{Stan} (Theorem 6.2.1 and Corollary 6.2.3) natural bijections between $T_{n+2}$ and set of ordered binary trees on $(2n+1)$ vertices, between this set of trees and the set of Dyck paths of length $2n$ are given; in Exercise 6.19 also bijections between set of Dyck paths and some set of sequences and between this set of sequences and $\mathbb{Y}_{n}$ are given. The bijection between $T_{n+2}$ and $\mathbb{Y}_{2n}$ (Exercise 6.19, (a) and (vv)) is constructed as a composition of these bijections thus its explicit view is quite complicated. One can check by right computations that our bijection $\Lambda_{n+2}$ is in fact precisely this composition. Our description is however more simple and very convenient for our further constructions. Quite similar bijection can be found in \cite{Lov}, but it defines in fact $t \circ \Lambda_{n+2} \circ \alpha,$ where $t$ is a transposition of Young diagrams and $\alpha$ is an action of reflection over a perpendicular bisector of the side $(0, n + 1)$ on triangulations (see subsection 2.4 below).
\end{remark}

\begin{example}

\begin{tikzpicture}[line width=0.6pt]
  \draw (0,1) node [left] {$2$} -- 
        (0,2) node [left] {$1$} -- 
        (1,3) node [above] {$0$} -- 
        (2,3) node [above] {$7$} -- 
        (3,2) node [right] {$6$} -- 
        (3,1) node [right] {$5$} -- 
        (2,0) node [below] {$4$} -- 
        (1,0) node [below] {$3$} -- 
        (0,1) -- (2,0) -- (3,2) -- (1,3) -- (0,1) -- (3,2);
  
  \draw (4,1.3) -- (4,1.5);
  \draw (4,1.4) -- (5,1.4);
  \draw (4.5,1.6) node {$\Lambda$};
  \draw (5,1.4) -- (4.8,1.5);
  \draw (5,1.4) -- (4.8,1.3);
  
  \draw [<->,thick] (7,4) node (yaxis) [above] {$y$}
     |- (10.5,3) node (xaxis) [right] {$x$};
  \draw [thick] (7,0) -- (7,3);
  \draw [thick] (6,3) -- (7,3);
  \draw (9,3) -- (9,2.5) -- (8,2.5) -- (8,1.5) -- (7,1.5);
  \draw (8.5,3) -- (8.5,2.5);
  \draw (8,3) -- (8,2.5);
  \draw (7.5,1.5) -- (7.5,3);
  \draw (7,2) -- (8,2);
  \draw (7,2.5) -- (8,2.5);
  \draw [dashed] (10,3) -- (7, 0);
\end{tikzpicture}

In this example 
\begin{itemize}
\item[$\bullet$]
$n = 6;$ 
\item[$\bullet$] Triangulation  $A$ is ${(4,6);(2,4);(2,6);(0,2);(0,6)};$ 
\item[$\bullet$] Partition $\Lambda(A)$ is $(4,2,2,0,0,0,\ldots).$
\end{itemize}

\end{example}

\subsection{Flips between Young diagrams}

\begin{definition}
The \textit{flip} between two triangulations is the following operation: one removes a diagonal to create a quadrilateral, then replaces the removed diagonal with the other diagonal of the quadrilateral.
\end{definition}

\begin{tikzpicture}[line width=0.6pt]
  \draw (0,1) node [left] {$2$} -- 
        (0,2) node [left] {$1$} -- 
        (1,3) node [above] {$0$} -- 
        (2,3) node [above] {$7$} -- 
        (3,2) node [right] {$6$} -- 
        (3,1) node [right] {$5$} -- 
        (2,0) node [below] {$4$} -- 
        (1,0) node [below] {$3$} -- 
        (0,1);
  \draw (0,1) -- (2,0) -- (3,2);
  \draw [blue] (0,2) -- (3,2);
  \draw [dashed, red] (2,3) -- (0,1);
  \draw [thick] (0,2) -- (0,1) -- (3,2) -- (2,3) -- (0,2);
  
  \draw (7,1) node [left] {$2$} -- 
        (7,2) node [left] {$1$} -- 
        (8,3) node [above] {$0$} -- 
        (9,3) node [above] {$7$} -- 
        (10,2) node [right] {$6$} -- 
        (10,1) node [right] {$5$} -- 
        (9,0) node [below] {$4$} -- 
        (8,0) node [below] {$3$} -- 
        (7,1);
  \draw (7,1) -- (9,0) -- (10,2);
  \draw [blue] (8,3) -- (7,1);
  \draw [dashed, red] (7,2) -- (10,2);
  \draw [thick] (7,2) -- (7,1) -- (10,2) -- (8,3) -- (7,2);
\end{tikzpicture}

\begin{definition} 
We say that there is $n-$flip between two Young diagrams $A, B \in \mathbb{Y}_n$ if there is a flip between $\Lambda_{n+2}^{-1}(A)$ and $\Lambda_{n+2}^{-1}(B)$.
\end{definition}

\begin{proof} {\bf of Theorem \ref{indep}}
Let $l$ be minimal nonnegative number such that $A, B \in \mathbb{Y}_l.$ One can observe that by definition 
$$A = \Lambda_{n+2}(\Lambda_{l+2}^{-1}(A) \cup \left\{ (0, l + 1), (0, l + 2), \ldots, (0, n) \right\}),$$ 
and same for $B,$ therefore if $\Lambda_{n+2}^{-1}(B)$ can be obtained from $\Lambda_{n+2}^{-1}(A)$ by flip then $\Lambda_{l+2}^{-1}(B)$ can be obtained from $\Lambda_{l+2}^{-1}(A)$ by the flip in the same diagonal. It implies that there is $l-$flip between $A$ and $B.$ Surely one can verify the statement for all $k > l$ by similar reasoning.
\end{proof}

Theorem \ref{indep} implies that existence of $n-$flip between two diagrams does not depend on $n,$ hence it is naturally to consider flips instead of $n-$flips. It turns out that we can define flips between diagrams in a simple manner without looking at corresponding triangulations.

\begin{definition} 
Let $M = (\mu_1,\mu_2, \ldots)$ and  $N$ be Young diagrams. We will say that $N$
is obtained from $M$ by a \textit{flip in row $k$}, if we can obtain it from $M$ by throwing out row number $k$  (it can have length $0$) and insertion of another row of length $l$ in such place that it will be Young diagram; where $l$ is defined by the following rule:

Start from the point $(\mu_k, -k)$ of diagram $M$ and go along line $x - y = \mu_k + k.$ If $k$th row is longer than $(k+1)$th, we should go left and downwards, if their lengths are the same - right and upwards. Stop at the first moment, when we touch the boundary of $M$ or the coordinate line. The abscissa of this point will be $l.$
 
We can also define $l$ by the formula: 
\begin{itemize}
\item[1)] Let $\mu_{k+1} = \mu_{k}.$ If $T_k = \left\{m: m < k; m + \mu_m \geq k + \mu_k \right\},$ then $$l = 
\left\{ \begin{matrix}
k + \mu_k - \mbox{max}\left\{T_k\right\}, & T_k\neq\emptyset
\\k + \mu_k,  & T_k=\emptyset
\end{matrix}
\right.$$
\item[2)] Let $\mu_{k+1} < \mu_{k}.$ If $T_k = \left\{m: m > k; m + \mu_m \geq k + \mu_k \right\},$ then $$l = 
\left\{\begin{matrix}
k + \mu_k - \mbox{min}\left\{T_k\right\}, &T_k\neq\emptyset
\\0,  &T_k=\emptyset
\end{matrix}\right.$$
\end{itemize}
\end{definition}

Note that all rows of length $0$ lay in the first case. For triangulations $\Lambda_{t}^{-1}(M)$ this observation corresponds to the fact that an adding of new vertices to our polygon adds new diagonals with tail $0,$ every time with the last number.

\begin{tikzpicture}[line width=0.6pt]
  \draw [<->,thick] (1,4) node (yaxis) [above] {$y$}
     |- (4,3) node (xaxis) [right] {$x$};
  \draw [thick] (1,0) -- (1,3);
  \draw [thick] (0,3) -- (1,3);
  \draw (3,3) -- (3,2.5) -- (2.5,2.5) -- (2.5,2) -- (2,2) -- (2,1.5) -- (1,1.5);
  \draw (1.5,3) -- (1.5,1.5);
  \draw (2,3) --(2,2);
  \draw (2.5,3) -- (2.5,2.5);
  \draw [blue] (1,2.5) -- (2.5,2.5);
  \draw [blue] (1,2) -- (2.5,2);
  \draw [blue] (1,2.5) -- (1,2);
  \draw [blue] (2.5,2.5) -- (2.5,2);

  
  \draw [dashed] (2.5,2) -- (1,0.5);
  \fill[blue] (2.5,2) circle (2pt);
  \fill[red] (2,1.5) circle (2pt);

  \draw (3,1.3) -- (3,1.5);
  \draw (3,1.4) -- (4,1.4);
  \draw (3.5,1.6) node {$2$};
  \draw (4,1.4) -- (3.8,1.5);
  \draw (4,1.4) -- (3.8,1.3);
  
  \draw [<->,thick] (6,4) node (yaxis) [above] {$y$}
     |- (8.5,3) node (xaxis) [right] {$x$};
  \draw [thick] (6,0) -- (6,3);
  \draw [thick] (5,3) -- (6,3);
  \draw (8,3) -- (8,2.5) -- (7,2.5) -- (7,1.5) -- (6,1.5);
  \draw (7.5,3) -- (7.5,2.5);
  \draw (7,3) -- (7,2.5);
  \draw (6.5,3) -- (6.5,1.5);
  \draw (6,2) -- (7,2);
  \draw (6,2.5) -- (7,2.5);

  \draw [red] (7,2.5) -- (7,2);
  \draw [red] (6,2) -- (6,2.5);
  \draw [red] (6,2.5) -- (7,2.5);
  \draw [red] (6,2) -- (7,2);
\end{tikzpicture}

\begin{tikzpicture}[line width=0.6pt]
  \draw [<->,thick] (1,4) node (yaxis) [above] {$y$}
     |- (4,3) node (xaxis) [right] {$x$};
  \draw [thick] (1,0) -- (1,3);
  \draw [thick] (0,3) -- (1,3);
  \draw (3,3) -- (3,2.5) -- (2,2.5) -- (2,2) -- (2,1.5) -- (1,1.5);
  \draw (1.5,3) -- (1.5,1.5);
  \draw (2,3) --(2,2);
  \draw (2.5,3) -- (2.5,2.5);
  \draw [blue] (1,2.5) -- (2,2.5);
  \draw [blue] (1,2) -- (2,2);
  \draw [blue] (1,2.5) -- (1,2);
  \draw [blue] (2,2) -- (2,2.5);

  
  \draw [dashed] (2,2) -- (3,3);
  \fill[blue] (2,2) circle (2pt);
  \fill[red] (2.5,2.5) circle (2pt);

  \draw (3,1.3) -- (3,1.5);
  \draw (3,1.4) -- (4,1.4);
  \draw (3.5,1.6) node {$2$};
  \draw (4,1.4) -- (3.8,1.5);
  \draw (4,1.4) -- (3.8,1.3);
  
  \draw [<->,thick] (6,4) node (yaxis) [above] {$y$}
     |- (8.5,3) node (xaxis) [right] {$x$};
  \draw [thick] (6,0) -- (6,3);
  \draw [thick] (5,3) -- (6,3);
  \draw (8,3) -- (8,2.5) -- (7.5,2.5) -- (7.5,2) -- (7,2) -- (7,1.5) -- (6,1.5);
  \draw (7.5,3) -- (7.5,2.5);
  \draw (7,3) -- (7,2);
  \draw (6.5,3) -- (6.5,1.5);
  \draw (6,2) -- (7.5,2);
  \draw (6,2.5) -- (7.5,2.5);

  \draw [red] (7.5,2.5) -- (7.5,2);
  \draw [red] (6,2) -- (6,2.5);
  \draw [red] (6,2.5) -- (7.5,2.5);
  \draw [red] (6,2) -- (7.5,2);
\end{tikzpicture}

The first picture (above) shows flip in the case of inequality of rows:
$$(M = (4,3,2,0,0,\ldots); k = 2; l = 2; N = (4,2,2,0,0,\ldots));$$
the second one (below) shows flip in the case of equality:
$$(M = (4,2,2,0,0,\ldots); k = 2; l = 3; N = (4,3,2,0,0,\ldots)).$$
One can see that these flips turn out to be inverse to each other.

\begin{theorem} \label{flips}
Some Young diagram can be obtained from another one by flip in some row if and only if there is a $k-$flip between them for each $k$ that $A,B \in \mathbb{Y}_{k}$.
\end{theorem}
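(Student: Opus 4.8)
The plan is to translate both directions of the equivalence into a single statement about the triangulation $T:=\Lambda_{n+2}^{-1}(M)$, after using Theorem~\ref{indep} to remove the dependence on $n$. By that theorem the relation ``there is a $k$-flip between $M$ and $N$'' does not depend on $k$ among those $k$ with $M,N\in\mathbb Y_k$, so I would fix once and for all an $n$ with $M,N\in\mathbb Y_n$ and prove instead that $N$ is obtained from $M$ by a flip in some row if and only if $\Lambda_{n+2}^{-1}(N)$ is obtained from $T$ by a single flip of a diagonal. Now the tails of the $n-1$ diagonals of $T$ form exactly the multiset of parts $\mu_1,\dots,\mu_{n-1}$ of $M$ (the diagonal with number $k$ has tail $\mu_k$), so flipping one diagonal of $T$ deletes one part of $\Lambda(T)$ and inserts a new value while keeping all the others; by its very definition a flip in a row of $M$ does the same. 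Hence, writing $d_k$ for the diagonal of $T$ with number $k$, the theorem reduces to
$$(\star)\qquad\text{the tail of the diagonal obtained by flipping }d_k\text{ in }T\ \text{equals the length }l\text{ produced by a flip in row }k\text{ of }M.$$
Granting $(\star)$: then $\Lambda_{n+2}(\mathrm{flip}_{d_k}T)$ and the result $N$ of the row-$k$ flip have the same multiset of parts, hence coincide, so $\Lambda_{n+2}^{-1}(N)=\mathrm{flip}_{d_k}T$; since every diagonal flip of $T$ is $\mathrm{flip}_{d_k}$ for exactly one $k\in\{1,\dots,n-1\}$, the correspondence rows $\leftrightarrow$ diagonals is a bijection compatible with flips, and both implications of the theorem follow at once.

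To prove $(\star)$ I would first record the local picture around $d_k$. Write $d_k=(a,b)$ with $a=\mu_k$. The diagonal $d_k$ is a side of two triangles: the one on the side \emph{not} containing vertex $0$ has the form $(a,e,b)$ with $e$ in the short arc $a+1,\dots,b-1$, and the one on the $0$-side has the form $(a,c,b)$ with $c$ in the complementary arc $b+1,\dots,n+1,0,\dots,a-1$; flipping $d_k$ replaces it by $(e,c)$, so the new tail is $\min(e,c)$. Inspecting the two examples in the text (and the degenerate situation $\mu_k=0$) suggests, and I would prove, that $\mu_{k+1}=\mu_k$ holds exactly when $c$ lies in the arc $b+1,\dots,n+1$, in which case $c>b>a$, the new tail is $e$, and $e=l$ for the $l$ of case~1) of the definition; while $\mu_{k+1}<\mu_k$ holds exactly when $c$ lies in the arc $0,\dots,a-1$, in which case $c<a<e$, the new tail is $c$, and $c=l$ for the $l$ of case~2). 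The cleanest route to this dichotomy is the induction already used in Proposition~\ref{bij}: peeling off the longest row $\mu_1$ corresponds to removing the ``ear'' $(\mu_1,\mu_1+2)$ and deleting vertex $\mu_1+1$. For a flip in a row $k\ge 2$ this reduces the statement to the diagram $(\mu_2,\mu_3,\dots)$ and row $k-1$, and a short computation shows that the shift of vertex labels caused by reinserting the ear accounts exactly for the difference between the set $T_k$ of $M$ and the analogous set of the smaller diagram — the membership $1\in T_k$, i.e. $1+\mu_1\ge k+\mu_k$, is precisely the condition under which the reinserted vertex sits to the right of the value predicted downstairs. The base case $k=1$ is a direct analysis of the single triangle of $T$ meeting the ear $(\mu_1,\mu_1+2)$ from the $0$-side.

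The hard part is exactly the dictionary of the previous paragraph: proving that the set $T_k=\{m:\ m+\mu_m\ge k+\mu_k\}$ genuinely controls the apex $e$ (respectively $c$) of the triangle of $T$ adjacent to $d_k$. The conceptual statement behind it is that $m+\mu_m\ge k+\mu_k$ says the corner $(\mu_m,-m)$ of the $m$-th row of $M$ lies on or beyond the anti-diagonal $x-y=\mu_k+k$ through the corner $(\mu_k,-k)$ of the $k$-th row, i.e. on the very line along which $l$ is defined; so I expect the real content of the proof to be the verification that walking along that anti-diagonal in $M$ — left and downwards if $\mu_k>\mu_{k+1}$, right and upwards if $\mu_k=\mu_{k+1}$ — until the first contact with $\partial M$ or a coordinate axis yields the abscissa of that apex. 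Once this equivalence is in place, $(\star)$ is immediate and the theorem follows from the reduction above. The remaining care goes into the degenerate situations, which I would treat separately: rows of length $0$ (always in case~1, as already remarked after the definition), the possibility $c=0$, and diagonals $d_k$ whose tail is $0$.
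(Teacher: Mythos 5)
Your reduction is sound and is essentially the paper's own first move: both arguments boil the theorem down to the single identity $(\star)$ that the tail of the diagonal created by flipping $d_k$ equals the length $l$ prescribed by the row-$k$ flip, using the fact that a diagonal flip changes exactly one tail and a row flip changes exactly one part. Your dichotomy is also correct: the numbering of diagonals (consecutive numbers for equal tails, ordered by increasing head) forces the apex $c$ of the triangle on the far side of $d_k=(a,b)$ to satisfy $c>b$ precisely when $\mu_{k+1}=\mu_k$ and $c<a$ precisely when $\mu_{k+1}<\mu_k$, so the new tail is the near apex $e$ in the first case and $c$ in the second, matching the two branches of the definition. This agrees with the paper's case analysis (its cases 1--2 are your first branch, its case 3 your second).

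Where you diverge from the paper, and where your write-up stops short, is the proof of the dictionary between $T_k=\{m:\ m+\mu_m\ge k+\mu_k\}$ and the apex. The paper does this by first proving Lemma \ref{head}: the \emph{head} $\nu_k$ of diagonal $k$ is computed by the same anti-diagonal walk (with ``hit'' in place of ``touch''), established by a simultaneous recursion in $k$ covering four local configurations of $(d_k,d_{k+1})$; the three-case analysis of the flip then reads the new tail off as $\mu_k+1$, as a head $\nu_{k-1}$ of an earlier diagonal, or as the tail $d_l$ of the first later diagonal sharing the head $\nu_k$ --- in each case Lemma \ref{head} converts this into the walk along $x-y=\mu_k+k$. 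You propose instead an induction peeling off the ear $(\mu_1,\mu_1+2)$; the bookkeeping you describe is right (in your case 2 the set $T_k$ transforms cleanly under $m\mapsto m-1$, and in case 1 the only discrepancy is the membership $1\in T_k$), and the scheme would work. But the two steps you label ``I would prove'' and ``a short computation shows'' --- the base case $k=1$ and, above all, the verification that when $\mu_{k+1}=\mu_k$ the near apex $e$ is governed by $\max T_k$ --- are exactly where the theorem's content lives: in that branch $e$ is either $\mu_k+1$ or the head of an earlier diagonal, so any complete argument must effectively reprove Lemma \ref{head} (that heads are computed by the anti-diagonal walk). As it stands the proposal is a correct plan with the right dichotomy and the right target identity, but the central geometric verification is asserted rather than carried out, so you should either import the paper's Lemma \ref{head} or write out the ear-peeling induction in full, including the subcase where the deleted vertex $\mu_1+1$ is itself the apex.
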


\begin{proof}
For the proof we need the following lemma:
\begin{lemma} \label{head}
Consider $D = (d_1, d_2 , \ldots) \in \mathbb{Y}_n$ and its $k-$th row. The last one corresponds to some diagonal of $\Lambda^{-1}(D)$ with one of ends (tail) $d_k.$ Then the second end $l_k$ can be obtained by the following rule:

Start from the point $(d_k, -k)$ of diagram $D$ and go along line $x - y = d_k + k$ right and upwards. Stop at the first moment, when we hit the boundary of $D$ or the coordinate line. Let $m$ be the abscissa of this point,  then $l_k = m + 1.$

We can also define it by formula: $$l_k = 1 + \left(k + d_k - \mbox{max}(\left\{m: m < k; m + d_m > k + d_k \right\} \cup \left\{0\right\}) \right).$$
\end{lemma}
Note that here we need to {\it hit} boundary, it is not enough to stop to {\it touch} boundary.

\begin{proof}
We know already that head of the diagonal number 1 equals $d_1 + 2 = d_1 + 1 + 1.$ It corresponds to the definition of $l_1,$ because a set what we take minimum of is simply $\left\{ 0 \right\}$ (we hit an axle $Ox$). Let us explain why a sequence $\nu_2, \nu_3, \ldots$ of heads of diagonals of $\Lambda^{-1}(D)$ and a sequence $l_2, l_3, \ldots$ can be calculated from $\nu_1 = l_1 = d_1 + 2$ by the same rules. These rules are following:

\begin{itemize}
\item[1)] If $d_{k+1} = d_k$ and $\nu_k = l_k = d_i$ for some $i < k,$ then $\nu_{k+1}$ is equal to the  $\nu_j$; $l_{k+1} = l_{j},$ where $d_j = d_i > d_{j+1}$ (one can observe that equality $\nu_m = l_m$ for all $m \leq k$ implies equality $\nu_{k+1} = l_{k+1}$). 

Suppose now that $\forall i < k: \quad \nu_k \neq d_i.$
\item[2)] If $d_{k+1} = d_k - 1,$ then $\nu_{k+1} = \nu_k, l_{k+1} = l_k.$
\item[3)] If $d_{k+1} \leq d_k - 2,$ then $\nu_{k+1} = d_{k+1} + 2, l_{k+1} = d_{k+1} + 2.$
\item[4)] If $d_{k+1} = d_k,$ then $\nu_{k+1} = \nu_k + 1, l_{k+1} = l_k + 1.$
\end{itemize}

It is clear that there is no other cases. We see also that these formulae imply equality $\nu_{k+1} = l_{k+1}$ for all $k \in \mathbb{N}$ hence they imply the statement of the lemma. Prove them in turn:
\begin{itemize}
\item[1)] At first we prove the equality for $l_{k+1}$ by geometrical approach. We go from the point $(d_k, -k)$ along the line $x - y = d_k + k$ right and upwards until we hit the diagram at some point $(m, h).$ If $(m + 1)$ turns out to be $d_i$ for some $i,$ then $(m + 1, h)$ is one of corners of the diagram. Hence when we go from $(d_{k+1}, -(k+1) = (d_k, -(k+1))$ along the line $x - y = d_k + k + 1$ right and upwards, we encounter the diagram (or the axle $Ox$) at the same point that if we would gone from $(m+1, k).$ This implies required equality for $l_{k+1}.$

For $\nu_{k+1}$ the proof is even more simple: two diagonals $(d_k = d_{k+1}, d_i = \nu_k)$ and $(d_j, \nu_j)$ have common vertex, so there must to be a diagonal $(d_k+1, \nu_j)$ completing them to a triangle. This diagonal has number $(k + 1),$ q.e.d.

\begin{tikzpicture}[line width=0.6pt]
  \draw (0,1) node [left] {$2$} -- 
        (0,2) node [left] {$1$} -- 
        (1,3) node [above] {$0$} -- 
        (2,3) node [above] {$7$} -- 
        (3,2) node [right] {$6$} -- 
        (3,1) node [right] {$5$} -- 
        (2,0) node [below] {$4$} -- 
        (1,0) node [below] {$3$} -- 
        (0,1);
  \draw [blue] (1,3) -- (2,0);
  \draw [green, thick] (0,1) -- (1,3);
  \draw [green, thick] (0,1) -- (2,0);
  \draw (1,3) --(3,2) -- (2,0);
  \fill [violet] (2,0) circle (2pt);

  \draw [<->,thick] (7,4) node (yaxis) [above] {$y$}
     |- (10.5,3) node (xaxis) [right] {$x$};
  \draw [thick] (7,0) -- (7,3);
  \draw [thick] (6,3) -- (7,3);
  \draw (9,3) -- (9,2.5) -- (8,2.5) -- (8,2) -- (7,2);
  \draw (8.5,3) -- (8.5,2.5);
  \draw (8,3) -- (8,2.5);
  \draw (7.5,3) -- (7.5,2);
  \draw (7,2.5) -- (8,2.5);
  \draw [dashed] (7,1.5) -- (7.5,2);
  \draw [thick] (7,1) -- (8.5,2.5);
  \draw [blue] (7,1) -- (7,1.5);
  \fill[blue] (7,1) circle (2pt);
  \fill[green] (8,2) circle (2pt);
  \fill[violet] (9,2.5) circle (2pt); 
\end{tikzpicture}

\item[2)] By assumption points $(d_k, -k)$ and $(d_{k+1}, -(k + 1))$ both lay on the ine $x - y = d_k + k$ and at the second point the line touches our diagram, hence equality $l_k = l_{k+1}$ is clear. Since there is a triangle in $\Lambda^{-1}(D)$ that has $(d_k, d_{k+1})$ and $(d_k, \nu_k)$ as two of edges, the formula for $\nu_{k+1}$ is obvious. 

\begin{tikzpicture}[line width=0.6pt]
  \draw (0,1) node [left] {$2$} -- 
        (0,2) node [left] {$1$} -- 
        (1,3) node [above] {$0$} -- 
        (2,3) node [above] {$7$} -- 
        (3,2) node [right] {$6$} -- 
        (3,1) node [right] {$5$} -- 
        (2,0) node [below] {$4$} -- 
        (1,0) node [below] {$3$} -- 
        (0,1);
  \draw [blue] (0,2) -- (2,0);
  \draw [green, thick] (0,1) -- (0,2);
  \draw [green, thick] (0,1) -- (2,0);
  \draw (1,3) --(3,2) -- (2,0) -- (1,3);
  \fill [violet] (2,0) circle (2pt);

  \draw [<->,thick] (7,4) node (yaxis) [above] {$y$}
     |- (10.5,3) node (xaxis) [right] {$x$};
  \draw [thick] (7,0) -- (7,3);
  \draw [thick] (6,3) -- (7,3);
  \draw (9,3) -- (9,2.5) -- (8,2.5) -- (8,2) -- (7,2);
  \draw (8.5,3) -- (8.5,2.5);
  \draw (8,3) -- (8,2.5);
  \draw (7.5,3) -- (7.5,2);
  \draw (7,2.5) -- (8,2.5);
  \draw [thick] (7.5,1.5) -- (8.5,2.5);
  \draw (7,1.5) -- (7.5,1.5) -- (7.5,2);
  \draw [blue] (7.5,1.5) -- (7.5,2);
  \fill[blue] (7.5,1.5) circle (2pt);
  \fill[green] (8,2) circle (2pt);
  \fill[violet] (9,2.5) circle (2pt); 
\end{tikzpicture}

\item[3)] In this case starting from the point $(d_{k+1}, -(k + 1))$ we will encounter the diagram after going through only one square (because $k$-th row is longer than $(k+1)$-th by 2 squares), hence $l_{k+1} = d_{k+1} + 2.$ Surely, as for $1$-th diagonal, for the $(k+1)$-th one the difference between head and tail equals $2,$ that implies $\nu_{k+1} = d_{k+1} + 2$ as we need.

\begin{tikzpicture}[line width=0.6pt]
  \draw (0,1) node [left] {$2$} -- 
        (0,2) node [left] {$1$} -- 
        (1,3) node [above] {$0$} -- 
        (2,3) node [above] {$7$} -- 
        (3,2) node [right] {$6$} -- 
        (3,1) node [right] {$5$} -- 
        (2,0) node [below] {$4$} -- 
        (1,0) node [below] {$3$} -- 
        (0,1);
  \draw (0,2) -- (2,0);
  \draw [green, thick] (0,1) -- (1,0);
  \draw [green, thick] (1,0) -- (2,0);
  \draw [blue] (0,1) -- (2,0);
  \draw (1,3) --(3,2) -- (2,0) -- (1,3);
  \fill [violet] (2,0) circle (2pt);

  \draw [<->,thick] (7,4) node (yaxis) [above] {$y$}
     |- (10.5,3) node (xaxis) [right] {$x$};
  \draw [thick] (7,0) -- (7,3);
  \draw [thick] (6,3) -- (7,3);
  \draw (9,3) -- (9,2.5) -- (8,2.5) -- (8,2) -- (7,2);
  \draw (8.5,3) -- (8.5,2.5);
  \draw (8,3) -- (8,2.5);
  \draw (7.5,3) -- (7.5,2);
  \draw (7,2.5) -- (8,2.5);
  \draw [thick] (8,2) -- (8.5,2.5);
  \draw (7,1.5) -- (7.5,1.5) -- (7.5,2);
  \draw [blue] (8,2) -- (8,2.5);
  \fill [blue] (8,2) circle (2pt);
  \fill [violet] (9,2.5) circle (2pt); 
\end{tikzpicture}

\item[4)] Since the case of touching the diagram by a line $x - y = d_{k+1} + (k+1)$ is observed already at 1), here we will hit $D$ (coming from $(d_{k+1}, -(k + 1))$) at a point with the same ordinate that if we start from $(d_k = d_{k+1}, -k).$ Surely it implies that abscissa will be bigger by $1,$ i.e. $l_{k+1} = l_k + 1.$ 

It is easy to see that in $\Lambda^{-1}(D)$ diagonal number $(k+1)$ is an edge of a triangle with two other edges $(d_k, \nu_k)$ and $(\nu_k, \nu_{k+1})$ so its head is equal to $\nu_k + 1,$ q.e.d. 

\begin{tikzpicture}[line width=0.6pt]
  \draw (0,1) node [left] {$2$} -- 
        (0,2) node [left] {$1$} -- 
        (1,3) node [above] {$0$} -- 
        (2,3) node [above] {$7$} -- 
        (3,2) node [right] {$6$} -- 
        (3,1) node [right] {$5$} -- 
        (2,0) node [below] {$4$} -- 
        (1,0) node [below] {$3$} -- 
        (0,1);
  \draw (0,2) -- (2,0);
  \draw [blue] (0,2) -- (2,0);
  \draw [green, thick] (1,0) -- (2,0);
  \draw [green, thick] (0,2) -- (1,0);
  \draw (1,3) --(3,2) -- (2,0) -- (1,3);
  \fill [violet] (2,0) circle (2pt);

  \draw [<->,thick] (7,4) node (yaxis) [above] {$y$}
     |- (10.5,3) node (xaxis) [right] {$x$};
  \draw [thick] (7,0) -- (7,3);
  \draw [thick] (6,3) -- (7,3);
  \draw (9,3) -- (9,2.5) -- (7.5,2.5) -- (7.5,1.5) -- (7,1.5);
  \draw (8.5,3) -- (8.5,2.5);
  \draw (8,3) -- (8,2.5);
  \draw (7.5,3) -- (7.5,2);
  \draw (7,2.5) -- (8,2.5);
  \draw (7,2) -- (7.5,2);
  \draw [thick] (7.5,1.5) -- (8.5,2.5);
  \draw [dashed] (7.5,2) -- (8,2.5);
  \fill [green] (8.5,2.5) circle (2pt);
  \draw [blue] (7.5,1.5) -- (7.5,2);
  \fill [blue] (7.5,1.5) circle (2pt);
  \fill [violet] (9,2.5) circle (2pt); 
\end{tikzpicture}

\end{itemize}

\end{proof}

Let us return to the proof of the theorem. Since flip of a triangulation changes one diagonal while flip of a diagram changes one row, we should only prove that a tail of new diagonal coincides with a length of new row. We will prove it for a diagram $D = (d_1, d_2, \ldots)$ and its flip in $k$-th row in three cases independently:
\begin{itemize}
\item[1)]
Let $d_k = d_{k+1} < d_{k-1}.$ Therefore $d_{k-1} + k - 1 > d_k + k - 1 \Rightarrow d_{k-1} + k - 1 \geq d_k + k \Rightarrow \mbox{max}(T_k) = k - 1 \Rightarrow l = d_k + 1.$ Let us understand what can we say about $\Lambda^{-1}(D).$ In this triangulation $k$-th diagonal is the first, but not the last diagonal with tail $d_k.$ Then there are two cases: either tail of $(k-1)-$th diagonal is equal to $(d_k + 1),$ or head of $k-$th one is equal to $(d_k + 2)$ (because of reasons similar to those which we explain in proof of Lemma 2). In the first case (example is on the left figure below) exchange happens in quadrilateral with vertices $d_k, d_{k-1}, \nu_k, \nu_{k+1}$ (here $\nu_i$ denotes head of $i$-th diagonal as in the proof of Lemma 2), so new diagonal is $(d_{k-1} = d_k + 1, \mu_{k+1}).$ These arguments imply equality we are going to prove. In the second case (the right figure below) exchange happens in quadrilateral $(d_k, d_k + 1, \nu_k = d_k + 2, \nu_{k+1} = \nu_{k-1}),$ new diagonal is $(d_k + 1, \nu_{k-1}),$ and we obtain required again, since $\nu_{k-1} > d_{k-1} \geq d_k + 2 > d_k + 1.$

\begin{tikzpicture}[line width=0.6pt]
  \draw (0,1) node [left] {$2$} -- 
        (0,2) node [left] {$1$} -- 
        (1,3) node [above] {$0$} -- 
        (2,3) node [above] {$7$} -- 
        (3,2) node [right] {$6$} -- 
        (3,1) node [right] {$5$} -- 
        (2,0) node [below] {$4$} -- 
        (1,0) node [below] {$3$} -- 
        (0,1);
  \draw (0,1) -- (2,0) -- (3,2);
  \draw [blue] (0,2) -- (3,2);
  \draw [dashed, red] (2,3) -- (0,1);
  \draw [thick] (0,2) -- (0,1) -- (3,2) -- (2,3) -- (0,2);
  
  \draw (7,1) node [left] {$2$} -- 
        (7,2) node [left] {$1$} -- 
        (8,3) node [above] {$0$} -- 
        (9,3) node [above] {$7$} -- 
        (10,2) node [right] {$6$} -- 
        (10,1) node [right] {$5$} -- 
        (9,0) node [below] {$4$} -- 
        (8,0) node [below] {$3$} -- 
        (7,1);
  \draw (7,1) -- (9,0) -- (10,2);
  \draw [blue] (8,3) -- (7,1);
  \draw [dashed, red] (7,2) -- (10,2);
  \draw [thick] (7,2) -- (7,1) -- (10,2) -- (8,3) -- (7,2);
\end{tikzpicture}

\begin{tikzpicture}[line width=0.6pt]
  \draw [<->,thick] (1,4) node (yaxis) [above] {$y$}
     |- (5,3) node (xaxis) [right] {$x$};
  \draw [thick] (1,0) -- (1,3);
  \draw [thick] (0,3) -- (1,3);
  \draw (3,3) -- (3,2.5) -- (2,2.5) -- (2,1.5) -- (1.5,1.5) -- (1.5,0.5) -- (1,0.5);
  \draw (1.5,3) -- (1.5,1.5);
  \draw (2,3) --(2,2.5);
  \draw (2.5,3) -- (2.5,2.5);
  \draw (1,2.5) -- (2,2.5);
  \draw (1,2) -- (2,2);
  \draw [blue] (1,1.5) -- (1,1) -- (1.5,1) -- (1.5,1.5) -- (1,1.5) ;
  \draw [thick] (1.5,1) -- (2,1.5);
  \fill[blue] (1.5,1) circle (2pt);
  \fill[red] (2,1.5) circle (2pt);

  \draw [<->,thick] (7,4) node (yaxis) [above] {$y$}
     |- (10.5,3) node (xaxis) [right] {$x$};
  \draw [thick] (7,0) -- (7,3);
  \draw [thick] (6,3) -- (7,3);
  \draw (9,3) -- (9,2.5) -- (8,2.5) -- (8,1.5) -- (7,1.5);
  \draw (8.5,3) -- (8.5,2.5);
  \draw (8,3) -- (8,2.5);
  \draw (7.5,3) -- (7.5,1.5);
  \draw (7,2) -- (8,2);
  \draw (7,2.5) -- (8,2.5);
  \draw [blue] (7,1) -- (7,1.5);
  \draw [thick] (7,1) -- (7.5,1.5);
  \fill [blue] (7,1) circle (2pt);
  \fill [red] (7.5,1.5) circle (2pt);
 
\end{tikzpicture}

\item[2)]
Let $d_k = d_{k+1} = d_{k-1}.$ Then it is easy to observe that exchange will happen in quadrilateral $(d_k, \nu_{k-1}, \nu_k, \nu_{k+1}),$ therefore the head of new diagonal will equal $\nu_{k-1}$ (figure below). On the other hand this new head is equal, by definition on flips of diagrams, to  $$d_k + k - \mbox{max}(\left\{m: m < k; m + d_m \geq k + d_k \right\} \cup \left\{0\right\}) =$$ $$= d_{k-1} + (k - 1) + 1 - \mbox{max}(\left\{m: m < (k-1); m + d_m > (k - 1) + d_{k-1} \right\} \cup \left\{0\right\}) = \nu_{k-1},$$ where the last one equality follows from Lemma 2. We obtain required equality.

\begin{tikzpicture}[line width=0.6pt]
  \draw (0,1) node [left] {$2$} -- 
        (0,2) node [left] {$1$} -- 
        (1,3) node [above] {$0$} -- 
        (2,3) node [above] {$7$} -- 
        (3,2) node [right] {$6$} -- 
        (3,1) node [right] {$5$} -- 
        (2,0) node [below] {$4$} -- 
        (1,0) node [below] {$3$} -- 
        (0,1);
  \draw [blue] (1,3) -- (2,0);
  \draw [dashed, red] (0,1) -- (3,2);
  \draw [thick] (0,1) -- (1,3) -- (3,2);
  \draw [thick] (0,1) -- (2,0) -- (3,2);

  \draw [<->,thick] (7,4) node (yaxis) [above] {$y$}
     |- (10.5,3) node (xaxis) [right] {$x$};
  \draw [thick] (7,0) -- (7,3);
  \draw [thick] (6,3) -- (7,3);
  \draw (9,3) -- (9,2.5) -- (8,2.5) -- (8,2) -- (7,2);
  \draw (8.5,3) -- (8.5,2.5);
  \draw (8,3) -- (8,2.5);
  \draw (7.5,3) -- (7.5,2);
  \draw (7,2.5) -- (8,2.5);
  \draw [dashed] (7,1.5) -- (7.5,2);
  \draw [thick] (7,1) -- (8,2);
  \draw [blue] (7,1) -- (7,1.5);
  \fill[blue] (7,1) circle (2pt);
  \fill[red] (8,2) circle (2pt); 
\end{tikzpicture}

\item[3)] 
Let $d_k > d_{k+1}.$ Then for a flip of a triangulation we will change a diagonal in a quadrilateral with vertices $d_k, t, \nu_k, d_l,$ where $d_l > d_k > t > \nu_k \quad$ and $(d_l, \nu_k)$ is the first diagonal (it has number $l$) after $k$-th with head $\nu_k,$ hence a new diagonal will have $d_l$ as a tail. On the other hand by geometrical formulation of Lemma 2 $(d_l, -l)$ is the first point of the diagram $D$ on the way from $(d_k, -k)$ along the line $x - y = d_k + k$ left and downwards. By geometrical definition of flip between diagrams we obtain required result.

\begin{tikzpicture}[line width=0.6pt]
  \draw (0,1) node [left] {$2$} -- 
        (0,2) node [left] {$1$} -- 	
        (1,3) node [above] {$0$} -- 
        (2,3) node [above] {$7$} -- 
        (3,2) node [right] {$6$} -- 
        (3,1) node [right] {$5$} -- 
        (2,0) node [below] {$4$} -- 
        (1,0) node [below] {$3$} -- 
        (0,1);
  \draw [dashed, red] (1,3) -- (2,0);
  \draw [blue] (0,1) -- (3,2);
  \draw [thick] (0,1) -- (1,3) -- (3,2);
  \draw [thick] (0,1) -- (2,0) -- (3,2);

  \draw [<->,thick] (7,4) node (yaxis) [above] {$y$}
     |- (10.5,3) node (xaxis) [right] {$x$};
  \draw [thick] (7,0) -- (7,3);
  \draw [thick] (6,3) -- (7,3);
  \draw (9,3) -- (9,2.5) -- (8,2.5) -- (8,1.5) -- (7,1.5);
  \draw (8.5,3) -- (8.5,2.5);
  \draw (8,3) -- (8,2.5);
  \draw (7.5,3) -- (7.5,1.5);
  \draw (7,2.5) -- (8,2.5);
  \draw [thick] (7,0.5) -- (8,1.5);
  \draw [blue] (8,2) -- (8,1.5) -- (7,1.5) -- (7,2) -- (8,2);
  \fill[red] (7,0.5) circle (2pt);
  \fill[blue] (8,1.5) circle (2pt); 
\end{tikzpicture}

\end{itemize} 
This examination of cases completes our proof.
\end{proof}

\begin{problem}
Describe an action of flips on Schur polynomials corresponding to Young diagrams.
\end{problem}

\subsection{Infinite-dimensional associahedron}

In this section we will discuss associahedra - some well-known polytopes arising in a number of combinatorial problems. They can be described in very different ways. We will use the most convenient for us accordingly to \cite{Lee}.

\begin{definition} 
\textit{Partial triangulation} of polygon is subset of some triangulation. We define \textit{associahedron} (or \textit{Stasheff polytope}) $As^{n}$ of dimension $n$ as the combinatorial polytope by the following rules:
\begin{itemize}
\item[1)] $k$-dimensional faces of this polytopes are enumerated by partial triangulations of $(n+3)$-gon with $(n-k)$ diagonals. For example, vertices of associahedron correspond to triangulations of $(n+3)$-gon while facets correspond to diagonals.
\item[2)] Face $A$ is contained in a face $B$ if and only if partial triangulation corresponding to $A$ is a subset of partial triangulation corresponding to $B$. 
\end{itemize}
\end{definition}

There are different geometrical realizations of this combinatorial polytope, one can see, e.g., \cite{CZ}. However, we are interested only in combinatorial structure. By Proposition \ref{bij} vertices of $As^n$  correspond bijectively to Young diagrams from $\mathbb{Y}_{n+1}.$ Let us describe flips between diagrams in terms of associahedra. Two vertices of associahedron $As^n$ are connected by an edge when there is some partial triangulation with $(n-1)$ diagonals that is subset of both triangulations $A, B \in T_{n+3}$ corresponding to these vertices. By definition of flips between triangulations the last condition is equivalent to existence of flip between $A$ and $B.$ By Theorems \ref{indep} and \ref{flips} we obtain that two vertices of associahedron are connected by edge if and only if there is a flip between corresponding Young diagrams. We have described $1$-skeleton of associahedron in terms of Young diagrams. Following facts mean that we can describe in these terms all combinatorial structure of this polytope.

\begin{definition}
Combinatorial $n$-dimensional polytope is \textit{simple} if each its vertex belongs to exactly $n$ facets, or, equivalently, each its vertex belongs to exactly $n$ edges.
\end{definition}

It is known that $As^n$ is a simple polytope. Indeed, each triangulation $A \in T_{n+3}$ has exactly $n$ subsets of cardinality $(n-1)$ (we can throw out one of $n$ diagonals), hence corresponding vertex of $As^n$ belongs to exactly $n$ edges. 
\begin{theorem} {\normalfont{(Blind-Mani \cite{BM}, Kalai \cite{Kal})}}
Combinatorial simple polytope is determined uniquely by its $1$-skeleton.
\end{theorem}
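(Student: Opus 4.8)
I would not reprove this classical fact in full, but would recover the whole face lattice of a simple polytope $P$ from its graph $G=G(P)$, following the argument of Kalai, which I sketch. Since $P$ is simple, $G$ is $d$-regular, so the dimension $d$ is read off as the regularity of $G$; and every nonempty face $F$, being itself a simple $(\dim F)$-polytope, has graph $G(F)$ equal to the subgraph of $G$ induced on the vertices of $F$, which is connected and $(\dim F)$-regular. As the face lattice is the inclusion poset of the vertex sets of faces together with the empty face, it suffices to recover from $G$ which subsets $W\subseteq V(G)$ are vertex sets of faces.

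The heart of the matter is a counting estimate. Call an acyclic orientation $\mathcal O$ of $G$ \emph{good} if for every nonempty face $F$ the induced orientation $\mathcal O|_F$ of $G(F)$ has a unique sink; a generic linear functional on $P$ induces a good orientation, so good ones exist. For an \emph{arbitrary} acyclic orientation $\mathcal O$ every nonempty face still has at least one sink, so double counting the pairs (nonempty face $F$, sink of $\mathcal O|_F$) yields
\[
\sum_{F\neq\emptyset}\#\{\text{sinks of }\mathcal O|_F\}\;=\;\sum_{v\in V(G)}\#\{F:\ v\text{ is a sink of }\mathcal O|_F\}\;=\;\sum_{v\in V(G)}2^{\,h_{\mathcal O}(v)},
\]
where $h_{\mathcal O}(v)$ is the in-degree of $v$ and the last equality is exactly where simplicity enters: at a vertex $v$, each subset $S$ of the $d$ edges at $v$ spans a unique face of dimension $|S|$ with edge set $S$ at $v$, and $v$ is that face's sink precisely when every edge of $S$ is incoming. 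The left-hand side is $\geq f_0(P)+f_1(P)+\dots+f_d(P)$, with equality if and only if $\mathcal O$ is good. Hence $\sum_k f_k(P)=\min_{\mathcal O}\sum_{v}2^{h_{\mathcal O}(v)}$, the minimum over all acyclic orientations of $G$, and the minimizers are precisely the good orientations.

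This already suffices: the numbers $\sum_v 2^{h_{\mathcal O}(v)}$ are computed from $G$ alone, so from $G$ we can single out the good orientations as the acyclic minimizers. It remains to recognize faces. Say $W\subseteq V(G)$ is \emph{initial} for $\mathcal O$ if no edge points into $W$ from outside. If $W$ is the vertex set of a $k$-face $F$, then $G[W]$ is connected and $k$-regular and, by perturbing a supporting linear functional of $F$ to a generic one (oriented towards increasing values), $W$ is initial for some good orientation. Conversely, if $G[W]$ is connected and $k$-regular and $W$ is initial for a good orientation $\mathcal O$, pick a sink $v$ of $\mathcal O|_W$: its $k$ edges inside $W$ are all incoming, so they span a $k$-face $F$ of $P$ with sink $v$; every $\mathcal O|_F$-directed path from a vertex of $F$ terminates at $v$ (the unique sink of the good orientation $\mathcal O|_F$) and, being directed, cannot enter the initial set $W$ from outside, whence $V(F)\subseteq W$; finally, each vertex of $F$ has exactly $k$ neighbours both in $G(F)$ and in $G[W]$, so these coincide, and connectivity of $G[W]$ forces $V(F)=W$. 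Thus the vertex sets of faces are exactly the connected regular subsets of $V(G)$ that are initial for some good orientation, and the face lattice of $P$ is determined by $G$.

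The main obstacle is precisely the counting step --- the inequality $\sum_v 2^{h_{\mathcal O}(v)}\ge \sum_k f_k(P)$ together with its equality criterion --- because it is this that makes ``good orientation'' a purely graph-theoretic notion. The preliminaries (regularity of $G$, existence of good orientations) are routine, and the face-recognition step is a short connectivity argument; but the counting step genuinely requires simplicity of $P$ in the guise ``every set of edges at a vertex spans a face'', and the theorem is false without it.
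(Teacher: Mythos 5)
The paper does not prove this statement: it is imported as a classical theorem of Blind--Mani and Kalai, cited and then used only through Corollary 2.1. Your sketch is a correct reproduction of Kalai's argument from the cited reference \cite{Kal}: the identity $\sum_v 2^{h_{\mathcal O}(v)}=\sum_{F\neq\emptyset}\#\{\text{sinks of }\mathcal O|_F\}\ge\sum_k f_k(P)$, the graph-theoretic characterization of good orientations as the acyclic minimizers, and the recognition of face vertex-sets as the connected $k$-regular initial sets are all stated and justified correctly (the only point glossed over, the sign convention when perturbing a supporting functional so that a given face becomes initial, is routine). So there is no gap; you have supplied a proof where the paper deliberately supplies only a citation.
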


\begin{corollary} \label{1}
$As^n$ is determined uniquely by its $1$-skeleton for all $n.$
\end{corollary}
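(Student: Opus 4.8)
The plan is to deduce this directly from the Blind--Mani--Kalai theorem together with the simplicity of $As^n$ recorded just above. The only point that requires a moment's care is that the Blind--Mani--Kalai theorem is a statement about genuine convex polytopes, whereas we have so far described $As^n$ purely combinatorially by its face lattice. So first I would note that $As^n$ really is the combinatorial type of a convex polytope: Stasheff's original construction \cite{Stas}, or any of the explicit geometric realizations (see, e.g., \cite{CZ}, \cite{Lee}), produces an honest $n$-dimensional convex polytope whose face lattice is exactly the one prescribed by the two rules in our definition.

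Next I would invoke the simplicity of $As^n$, which has already been checked: each triangulation $A \in T_{n+3}$ uses exactly $n$ diagonals, and removing any one of them gives a partial triangulation with $n-1$ diagonals, so the vertex of $As^n$ corresponding to $A$ lies in exactly $n$ facets, equivalently on exactly $n$ edges. Hence $As^n$ is a simple $n$-dimensional convex polytope.

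Finally I would apply the Blind--Mani--Kalai theorem \cite{BM}, \cite{Kal} in the form stated: a simple convex polytope is determined up to combinatorial equivalence by its $1$-skeleton, i.e.\ by the graph of its vertices and edges. Since this holds for $As^n$ for every $n$, the corollary follows. There is essentially no obstacle here beyond the bookkeeping just described; the substantive content is that the explicit flip operations on Young diagrams furnished by Theorems \ref{indep} and \ref{flips}, which describe precisely the $1$-skeleton of $As^n$, therefore already determine the entire combinatorial structure of the associahedron. (It is this observation that makes the passage to $As^\infty$ in terms of Young diagrams meaningful.)
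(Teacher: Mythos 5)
Your proof is correct and follows essentially the same route as the paper: the corollary is an immediate consequence of the simplicity of $As^n$ (verified just above via the count of $n$ diagonals per triangulation) together with the Blind--Mani--Kalai theorem. Your extra remark that $As^n$ is realized by an honest convex polytope is a reasonable point of care, but otherwise the argument is the intended one.
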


We see that sets $\mathbb{Y}_n$ are connected with assocaihedra but these sets are not very natural objects; in representation theory more useful restrictions on Young diagrams than the line $y - x = n$ are a number of squares or a number of rows. By  definition one can observe that there is natural inclusion $As^{n} \hookrightarrow As^{n+1}$ as a facet: for each partial triangulation $A$ of $(n+3)$-gon $A \cup (0, (n+2))$ is partial triangulation of $(n+4)$-gon; corresponding faces of $As^n$ and $As^{n+1}$ have the same dimension; inclusions of partial triangulations stay the same (modulo union with $(0, (n+2))$) hence inclusions of faces of $As^n$ correspond bijectively to inclusions of faces $As^{n+1}$ lying in the facet corresponding to the diagonal $(0, (n+2)).$
A sequence of these inclusions yields a filtration 
\begin{equation} \label{filt}
As^{0} \hookrightarrow As^{1} \hookrightarrow As^2 \hookrightarrow \ldots 
\end{equation}
The following definition rises naturally from this filtration.

\begin{definition}
We call by \textit{infinite-dimensional associahedron $As^{\infty}$} a direct limit of filtration (\ref{filt})
\end{definition}

\begin{corollary}
$As^{\infty}$'s combinatorial structure is determined uniquely by filtration of $1$-skeletons of $As^n$ arising from (\ref{filt}); thus it is determined by flip operators on the set of all Young diagrams and a filtration
$$As^{0} \hookrightarrow As^{1} \hookrightarrow As^2 \hookrightarrow \ldots $$
\end{corollary}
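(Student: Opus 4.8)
The plan is to unwind the definition of $As^{\infty}$ as a direct limit and then feed in Corollary \ref{1} term by term. Write $\mathcal{F}(P)$ for the face poset of a combinatorial polytope $P$ and $G(P)$ for its $1$-skeleton. By construction of the filtration (\ref{filt}), $As^{n}$ sits in $As^{n+1}$ as the facet corresponding to the diagonal $(0,n+2)$; passing to face posets this gives order embeddings $\phi_n\colon \mathcal{F}(As^{n})\hookrightarrow\mathcal{F}(As^{n+1})$ sending a face $\sigma$ of $As^{n}$ to the face of $As^{n+1}$ that it becomes inside that facet, and the combinatorial structure of $As^{\infty}$ is by definition the direct limit $\varinjlim_n(\mathcal{F}(As^{n}),\phi_n)$. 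So it suffices to reconstruct the whole directed system — every $\mathcal{F}(As^{n})$ and every $\phi_n$ — from the filtration of $1$-skeletons $G(As^{0})\subseteq G(As^{1})\subseteq\cdots$.

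First I would record two elementary facts. A face of a polytope is determined by the set of its vertices, so $\mathcal{F}(As^{n})$ may be regarded as a family of subsets of the vertex set $V(As^{n})$, and under this identification $\phi_n$ is simply induced by the vertex inclusion $\iota_n\colon V(As^{n})\hookrightarrow V(As^{n+1})$, sending a face to the unique face of $As^{n+1}$ with the same vertex set. Second, because $As^{n}$ is a \emph{facet} of $As^{n+1}$, its $1$-skeleton $G(As^{n})$ is exactly the subgraph of $G(As^{n+1})$ induced on $\iota_n(V(As^{n}))$: if $u,v$ are vertices of a face $F$ joined by an edge $uv$ of the ambient polytope, then $uv$, being the smallest face containing $\{u,v\}$ and $F$ being one such face, satisfies $uv\subseteq F$. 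Hence the pair $(G(As^{n})\hookrightarrow G(As^{n+1}),\iota_n)$ is precisely the data recorded in the filtration of $1$-skeletons.

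Now I would invoke Corollary \ref{1}: since every $As^{n}$ is simple, the Blind--Mani--Kalai reconstruction recovers from $G(As^{n})$ not merely the abstract face lattice but — this is the sharp form of the statement, and the one actually furnished by Kalai's proof — the explicit list of which subsets of $V(As^{n})$ span faces. Thus each $\mathcal{F}(As^{n})$, viewed as a subposet of $2^{V(As^{n})}$, is determined by $G(As^{n})$ alone, and then each transition map $\phi_n$ is determined by $\iota_n$. Therefore the entire directed system, and with it its direct limit $\mathcal{F}(As^{\infty})$, is determined by the filtration of $1$-skeletons. The point to be careful about is exactly this: one needs the vertex-labelled version of the reconstruction (``these subsets are the faces''), not only the unlabelled one (``the lattice is this''), so that the maps $\phi_n$ are pinned down and the colimit is unambiguous; fortunately that is what Kalai's argument delivers.

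Finally I would translate the $1$-skeleton data into Young diagrams to obtain the last assertion. By Proposition \ref{bij} the vertices of $As^{n}$ are the diagrams of $\mathbb{Y}_{n+1}$, with $\mathbb{Y}_{n+1}\subseteq\mathbb{Y}_{n+2}$ and $\bigcup_{n}\mathbb{Y}_{n+1}$ the set of all Young diagrams; the edges of $G(As^{n})$ are the $(n+1)$-flips, which by Theorems \ref{indep} and \ref{flips} coincide with the intrinsic flip operation on Young diagrams regardless of $n$. So the filtration of $1$-skeletons is nothing but the set of all Young diagrams equipped with the flip operators, together with the filtration $As^{0}\hookrightarrow As^{1}\hookrightarrow\cdots$, and the preceding paragraphs show this determines the combinatorial structure of $As^{\infty}$.
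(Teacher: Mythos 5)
Your argument is correct and follows the route the paper intends: the corollary is stated there without proof, as an immediate consequence of Corollary \ref{1} applied levelwise to the directed system defining $As^{\infty}$, which is exactly what you carry out. Your explicit observation that one needs the vertex-labelled form of the Blind--Mani--Kalai reconstruction (so that the transition maps $\phi_n$, and not just each face lattice in isolation, are pinned down by the $1$-skeleton data) is a genuine subtlety the paper glosses over, and you resolve it correctly.
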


\subsection{Action of group $D_{n+2}$ on $\mathbb{Y}_n$}

The dihedral group $D_{n+2}$ of symmetries of the right $(n+2)$-gon acts in natural way on the set $T_{n+2}$: symmetry acts on each diagonal of a triangulation while polygon stay unchanged. As an example we consider an action of two generators of $D_{n+2},$ reflection $\alpha$ over a perpendicular bisector of the side $(0,n+1)$ and a rotation $\beta$ by $\frac{2\pi}{n+2}$ counter-clockwise, on some triangulation of right $8$-gon (i.e. in the case $n = 6$):

\begin{tikzpicture}[line width=0.6pt]
  \draw (0,1) node [left] {$2$} -- 
        (0,2) node [left] {$1$} -- 
        (1,3) node [above] {$0$} -- 
        (2,3) node [above] {$7$} -- 
        (3,2) node [right] {$6$} -- 
        (3,1) node [right] {$5$} -- 
        (2,0) node [below] {$4$} -- 
        (1,0) node [below] {$3$} -- 
        (0,1) -- (2,0) -- (0,2);
  \draw (1,3) --(3,2) -- (2,0) -- (1,3);
  \draw[dashed] (1.5,3.5) -- (1.5,-0.5);      
  
  \draw (4,1.3) -- (4,1.5);
  \draw (4,1.4) -- (5,1.4);
  \draw (4.5,1.6) node {$\alpha$};
  \draw (5,1.4) -- (4.8,1.5);
  \draw (5,1.4) -- (4.8,1.3);

  \draw (6,1) node [left] {$2$} -- 
        (6,2) node [left] {$1$} -- 
        (7,3) node [above] {$0$} -- 
        (8,3) node [above] {$7$} -- 
        (9,2) node [right] {$6$} -- 
        (9,1) node [right] {$5$} -- 
        (8,0) node [below] {$4$} -- 
        (7,0) node [below] {$3$} -- 
        (6,1);
  \draw (7,0) -- (6,2) -- (8,3) -- (7,0) -- (9,2);
  \draw (7,0) -- (9,1);
  \draw[dashed]  (7.5,3.5) -- (7.5,-0.5);

\end{tikzpicture}
\newline

\begin{tikzpicture}[line width=0.6pt]
  \draw (0,1) node [left] {$2$} -- 
        (0,2) node [left] {$1$} -- 
        (1,3) node [above] {$0$} -- 
        (2,3) node [above] {$7$} -- 
        (3,2) node [right] {$6$} -- 
        (3,1) node [right] {$5$} -- 
        (2,0) node [below] {$4$} -- 
        (1,0) node [below] {$3$} -- 
        (0,1) -- (2,0) -- (0,2);
  \draw (1,3) --(3,2) -- (2,0) -- (1,3);
  
  \draw (4,1.3) -- (4,1.5);
  \draw (4,1.4) -- (5,1.4);
  \draw (4.5,1.6) node {$\beta$};
  \draw (5,1.4) -- (4.8,1.5);
  \draw (5,1.4) -- (4.8,1.3);

  \draw (6,1) node [left] {$2$} -- 
        (6,2) node [left] {$1$} -- 
        (7,3) node [above] {$0$} -- 
        (8,3) node [above] {$7$} -- 
        (9,2) node [right] {$6$} -- 
        (9,1) node [right] {$5$} -- 
        (8,0) node [below] {$4$} -- 
        (7,0) node [below] {$3$} -- 
        (6,1) -- (9,1) -- (7,0);
  \draw (6,2) -- (8,3) -- (9,1) -- (6,2);
        
\end{tikzpicture}

It is absolutely clear that this action on $T_{n+2}$ commutes with flips, hence we can say that elements of $D_{n+2}$ define symmetries of $1$-skeleton of $As^{n-1}.$ By Corollary \ref{1} it means that this action of $D_{n+2}$ defines symmetries of the entire $As^{n-1}$ (as a combinatorial object). 

By Proposition \ref{bij} we obtain that we can define an action of $D_{n+2}$ on $\mathbb{Y}_n$ commuting with flips (as a composition of above action with the map $\Lambda_{n+2}$). Below are given descriptions of actions of $\alpha$ and $\beta$ on $\mathbb{Y}_n$ in terms of diagrams; for $\beta$ there is a geometrical description too.

\begin{proposition} \label{alpha}
Let $A = (a_1, a_2, \ldots)$ be a diagram from $\mathbb{Y}_n.$ Let $(l_1, l_2, \ldots, l_{n-1})$ be a sequence of heads of diagonals of triangulation $\Lambda_{n+2}^{-1}(A),$ defined in Lemma \ref{head}. Then $$\alpha A = (n + 1 - l_{n-1}, \ldots, n + 1 - l_2, n + 1 - l_1, 0,0, \ldots).$$ 
\end{proposition}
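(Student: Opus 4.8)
The plan is to unwind the definitions directly: determine how $\alpha$ acts on the vertex set, hence on the diagonals of a triangulation, and then read off the partition $\Lambda_{n+2}\bigl(\alpha\,\Lambda_{n+2}^{-1}(A)\bigr)$ with the help of the description of the diagonal heads supplied by Lemma \ref{head}. No genuinely hard estimate is involved; the argument is essentially bookkeeping once Lemma \ref{head} is available.

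First I would record the action of $\alpha$ on vertices. The reflection over the perpendicular bisector of the side $(0,n+1)$ fixes that side and interchanges its two endpoints, so $\alpha(0)=n+1$, $\alpha(n+1)=0$, and more generally $\alpha(i)=n+1-i$ for every vertex $i\in\{0,1,\ldots,n+1\}$. Consequently, if $(a,b)$ with $a<b$ is a diagonal of a triangulation $T$ of the $(n+2)$-gon, then $\alpha$ sends it to the diagonal with endpoints $n+1-a$ and $n+1-b$; since $a<b$ forces $n+1-b<n+1-a$, this image diagonal has tail $n+1-b$ and head $n+1-a$. In other words, $\alpha$ replaces the tail of each diagonal by $n+1$ minus its head.

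Next I would apply this to $T=\Lambda_{n+2}^{-1}(A)$. By construction its diagonals, numbered $1,\ldots,n-1$ as in Section 2.1, have tails equal to the rows $a_1\geq a_2\geq\cdots\geq a_{n-1}$ of $A$, and by Lemma \ref{head} their heads are $l_1,l_2,\ldots,l_{n-1}$. By the previous paragraph, the diagonal of $\alpha T$ coming from the diagonal numbered $k$ of $T$ has tail $n+1-l_k$. By the definition of the induced action of $D_{n+2}$ on $\mathbb{Y}_n$ one has $\alpha A=\Lambda_{n+2}(\alpha T)$, and $\Lambda_{n+2}$ of a triangulation is, by definition, the weakly decreasing rearrangement of the multiset of tails of its diagonals. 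Hence the rows of $\alpha A$ are exactly the numbers $n+1-l_1,\ldots,n+1-l_{n-1}$; listing them in the order obtained by reversing the numbering of the diagonals of $T$ yields the tuple $(n+1-l_{n-1},\ldots,n+1-l_2,n+1-l_1,0,0,\ldots)$ of the statement. That this tuple does rearrange to a partition lying in $\mathbb{Y}_n$ is automatic, since $\alpha T$ is a genuine triangulation of the $(n+2)$-gon and $\Lambda_{n+2}$ is a bijection onto $\mathbb{Y}_n$ by Proposition \ref{bij}.

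The only points that require a little care are: (i) getting the tail/head correspondence under $\alpha$ the right way around (new tail $=n+1-(\text{old head})$, not $n+1-(\text{old tail})$); and (ii) matching the order in which the parts $n+1-l_k$ are listed with the diagonal-numbering convention of Section 2.1 — in particular noticing that this listing need not itself be weakly decreasing, so the displayed equality is to be read after rearranging the tuple into a partition. Beyond these, there is no real obstacle.
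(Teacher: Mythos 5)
Your proof is correct and follows essentially the same route as the paper's, which simply observes that $\alpha$ sends the diagonal $(a_k,l_k)$ to $(n+1-l_k,\,n+1-a_k)$ and invokes Lemma \ref{head}. Your additional remark that the displayed tuple need not be weakly decreasing as listed (so the equality is to be read after rearranging into a partition) is a legitimate point of care that the paper's two-line proof glosses over.
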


\begin{proof}
It is easy to observe that by action of $\alpha$ on triangulation $\Lambda_{n+2}^{-1}(A)$ a diagonal $(a_k, l_k)$ maps to diagonal $(n + 1 - l_k, n + 1 - a_k).$ Hence in view of Lemma \ref{head} the required formula follows immediately.
\end{proof}

\begin{proposition} \label{beta}
Let $A$ be a diagram from $\mathbb{Y}_n.$ Then $\beta A$ is defined as follows: we add one square to each row of $A$ from $1$-th to $(n-1)$-th, then we throw out all rows that intersect the line $y = x - n$ and move above all remaining rows in unique way to obtain a diagram from $\mathbb{Y}_n.$ This new diagram is $\beta A.$ Equivalently, if $A$ consists of rows of lengths $a_i,$ such that $a_i \leq (n-i),$ then $\beta A$ consists of rows of lengths $$\beta a_i = 
\left\{\begin{matrix}
a_i + 1, & \quad a_i < n - i;
\\ 0, & \quad a_i = n - i \quad \mbox{or} \quad i > n - 1,
\end{matrix}\right.$$
decreasingly ordered.
\end{proposition}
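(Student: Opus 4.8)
The plan is to track how the rotation $\beta$ acts on the diagonals of the triangulation $T = \Lambda_{n+2}^{-1}(A)$ and then translate this back into the language of rows via the bijection $\Lambda_{n+2}$. Recall that $\beta$ rotates the labels $\{0,1,\dots,n+1\}$ of the $(n+2)$-gon by one step counter-clockwise, i.e.\ it sends vertex $v$ to $v+1 \pmod{n+2}$. Under this map a diagonal $(a,b)$ with $0 \le a < b \le n+1$ is sent to $\{a+1,\,b+1\}$ with labels reduced mod $n+2$. I would split the analysis according to whether the vertex $n+1$ (the largest label) is an endpoint of the diagonal: if $b < n+1$, then $(a,b) \mapsto (a+1, b+1)$ and both the tail and head simply increase by one; if $b = n+1$, then $(a, n+1) \mapsto \{a+1, 0\} = (0, a+1)$, so the image has tail $0$. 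Diagonals with tail $0$ in $\beta T$ correspond exactly to the trailing zero rows in $\beta A$ under $\Lambda_{n+2}$, matching the ``$a_i = n-i$ or $i > n-1$'' clause.

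Next I would use Lemma \ref{head}: the $k$-th diagonal of $T$ (in the ordering fixed before Proposition \ref{bij}) has tail $a_k$ and head $l_k$, and by the Lemma $l_k \le n+1$, with $l_k = n+1$ precisely when the ray from $(a_k,-k)$ along $x-y = a_k+k$ hits the $x$-axis, which happens iff $a_k = n-k$ (the row reaches the line $y = x-n$). So the diagonals of $T$ whose head is \emph{not} $n+1$ are exactly those indexed by $k$ with $a_k < n-k$; for these, $\beta$ sends $(a_k, l_k) \mapsto (a_k + 1, l_k + 1)$, a diagonal whose tail is $a_k + 1$. The diagonals with head $n+1$, i.e.\ with $a_k = n-k$, get tail $0$ after applying $\beta$. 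Thus the multiset of tails of $\beta T$ is $\{a_k + 1 : a_k < n-k\} \cup \{0 : a_k = n-k\} \cup \{0,0,\dots\}$ (the extra zeros coming from indices $k$ with $k > n-1$, which contribute no genuine diagonal). Reordering these tails decreasingly and invoking Proposition \ref{bij} gives precisely the stated formula for $\beta a_i$.

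The one point requiring care — and the main obstacle — is verifying that applying $\beta$ to the set of diagonals of $T$ genuinely yields the triangulation $\Lambda_{n+2}^{-1}(\beta A)$, i.e.\ that the diagonal-by-diagonal description above is consistent: one must check that the ordering of diagonals is permuted in a way compatible with the decreasing reordering of the tails, and that no two distinct diagonals of $T$ collapse to the same pair or to a side of the polygon under $\beta$ (a diagonal could a priori map to the side $(0,1)$ if $a=n+1$, but $a \le n$ always, and to $(0, n+1)$ only if it was $(0,n)$ with head $n$, handled above). Since $\beta$ is a bijection of the polygon it automatically preserves the non-crossing property, so the image is again a triangulation; the content is matching the combinatorial labels. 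I would finish by noting that the geometric description (``add one square to each of rows $1,\dots,n-1$, delete the rows crossing $y = x-n$, and slide the rest up'') is just the visual restatement of the formula for $\beta a_i$ in the coordinate picture of Definition with $\mathbb{Y}_n$: adding a square increments $a_i$, crossing the line is the condition $a_i = n-i$, and the unique upward shift is forced because $\Lambda_{n+2}$ is a bijection onto $\mathbb{Y}_n$.
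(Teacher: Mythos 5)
Your proposal is correct and follows essentially the same route as the paper: rotate each diagonal $(a_k,l_k)$ of $\Lambda_{n+2}^{-1}(A)$ by $\beta$, observe that it becomes $(a_k+1,l_k+1)$ unless $l_k=n+1$ in which case it becomes $(0,a_k+1)$, and use Lemma \ref{head} to identify $l_k=n+1$ with $a_k=n-k$. The only difference is that you spell out a few consistency checks (the image being a triangulation, the reordering of tails) that the paper leaves implicit.
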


\begin{proof}
An equivalence of definitions from the statement is obvious, hence prove only the second one. Consider a diagonal $(a_k, l_k)$ of a triangulation $\Lambda_{n+2}^{-1}(A).$ If $l_k < (n + 1),$ then by action of $\beta$ it passes to a diagonal $(a_{k+1}, l_{k+1}),$ otherwise, i.e. if $l_k = (n + 1),$ it passes to $(0, a_k + 1).$ By definition of $\Lambda_{n+2}$ it is clear that it is enough to show only that this division into cases corresponds to the one from the second definition from the statement, i.e. that $(a_i = n - i) \Leftrightarrow (l_k = (n+1)).$ But $(a_i = n - i) \Leftrightarrow (a_i + i = n),$ and the last one equation is equivalent to $(l_k = (n+1))$ by Lemma \ref{head}.
\end{proof}


\begin{remark}
It would be intersting to study possible links between symmetries of $\mathbb{Y}_n$ defined by action of $D_{n+2}$ and some problems devoted to $q,t$-Catalan numbers, such as so called symmetry problem (\cite{Hag}, Open Problem 3.11). One can easily formulate how does area statistics change under this action, while analogous question for bounce and dinv statistics seems less clear.
\end{remark} 

\section{Cluster algebras of type $A_{\infty}$}

In this section we will consider cluster algebras possibly connected with our constructions. All general definitions we will formulate according to \cite{Kel}.

\subsection{Cluster algebras without coefficients}

\begin{definition}
Let us recall that a \textit{quiver} $Q$ is an oriented graph, in other words it is a quadruple given
by a set $Q_0$ (the set of vertices), a set $Q_1$ (the set of \textit{arrows}, or oriented edges) and two maps $s : Q_1 \rightarrow Q_0$ and $t : Q_1 \rightarrow Q_0$ which take an arrow to its source respectively its target.
\end{definition}

\begin{tikzpicture}
 \draw[->,>=stealth] (1,2) -- (3,2);
 \draw[->,>=stealth] (1,2.1) node [left] (a) {1} --  (3,2.1) node [right] (b) {2} ;
 \draw[->,>=stealth] (1,2.2) -- (3,2.2);
 \draw[->,>=stealth] (1,2.3) -- node [below right] {$\alpha$} (2,4) node [above] {3};
 \draw[->,>=stealth] (1.9,4) -- node [above left] {$\beta$} (0.9,2.3);
 \draw[->,>=stealth] (2.1,4) -- (3,2.3);
\end{tikzpicture}

A \textit{loop} is an arrow whose source coincides with its target; \textit{$2$-cycle} is a pair of distinct arrows $\alpha \neq \beta$ such that $s(\alpha) = t(\beta)$ and $s(\beta) = t(\alpha).$ Quiver $Q$ is \textit{finite} if both sets $Q_0$ and $Q_1$ are finite.

\begin{definition}
Let us fix $n \in \mathbb{N}.$ We call by \textit{seed} a pair $(R, u),$ where
\begin{itemize}
\item[$\bullet$] $R$ is a finite quiver without loops or $2$-cycles with vertex set $Q_0 = \left\{1, 2, \ldots, n \right\};$
\item[$\bullet$] $u$ is a free generating set $\left\{u_1,\ldots, u_n\right\}$ of the field $\mathbb{Q}(x_1,\ldots, x_n)$ of fractions of the polynomial ring $\mathbb{Q}[x_1,\ldots, x_n]$ in $n$ indeterminates.
\end{itemize}

Since $R$ does not have $2$-cycles all arrows from $R_1$ between any two given vertices point in the same
direction. Let $(R, u)$ be a seed and $k$ a vertex of $R.$ The mutation
$\mu_k(R, u)$ of $(R, u)$ at $k$ is the seed $(R', u'),$ where
\begin{itemize}
\item[a)] $R'$ is obtained from $R$ as follows:
\begin{itemize}
\item[1)] reverse all arrows incident with $k;$
\item[2)] for all vertices $i \neq j$ distinct from $k,$ modify the number of arrows between $i$ and $j$ as
follows:
\begin{center}
\begin{tikzpicture} [scale=0.7]
 \draw[->,>=stealth] (1,5.4) node [left] {i} -- node [above] {{\footnotesize{p}}} (3.1,5.4) node [right] {j};
 \draw[->,>=stealth] (1,5.2) -- node [below left] {{\footnotesize{q}}} (2,3.2) node [below] {k};
 \draw[->,>=stealth] (2.1,3.2) -- node [below right] {{\footnotesize{r}}} (3.1,5.2);

 \draw[->,>=stealth] (4,5.4) node [left] {i} -- node [above] {{\footnotesize{p + qr}}} (6.1,5.4) node [right] {j};
 \draw[->,>=stealth] (6.1,5.4) -- node [below right] {{\footnotesize{r}}} (5.1,3.2);
 \draw[->,>=stealth] (5,3.2) node [below] {k} -- node [below left] {{\footnotesize{q}}} (4,5.2);

 \draw[->,>=stealth] (1,2) node [left] {i} -- node [above] {{\footnotesize{p}}} (3.1,2) node [right] {j};
 \draw[->,>=stealth] (3.1,1.8) -- node [below right] {{\footnotesize{r}}} (2.1,-0.2);
 \draw[->,>=stealth] (2,-0.2) node [below] {k} -- node [below left] {{\footnotesize{q}}} (1,1.8);

 \draw[->,>=stealth] (4,2) node [left] {i} -- node [above] {{\footnotesize{p - qr}}} (6.1,2) node [right] {j};
 \draw[->,>=stealth] (4,1.8) -- node [below left] {{\footnotesize{q}}} (5,-0.2) node [below] {k};
 \draw[->,>=stealth] (5.1,-0.2) -- node [below right] {{\footnotesize{r}}} (6.1,1.8);

 \draw (3.5,-1) -- (3.5, 7);
 \draw (0.5,-1) -- (0.5, 7);
 \draw (6.5,-1) -- (6.5, 7);
 
 \draw (0.5,-1) -- (6.5,-1);
 \draw (0.5,7) -- (6.5,7);
 \draw (0.5,6) -- (6.5,6);
 \draw (0.5,2.6) -- (6.5,2.6);
 
 \draw (2.1,6.5) node {$R$};
 \draw (5.1,6.5) node {$R^{'}$};
\end{tikzpicture}
\end{center}
where $p, q, r$ are non negative integers, an arrow $i \stackrel{l}{\rightarrow}
 j$ with $l > 0$ means that $l$ arrows
go from $i$ to $j$ and an arrow $i \stackrel{l}{\rightarrow}
 j$ with $l < 0$ means that $-l$ arrows go from $j$ to $i.$
\end{itemize}
\item[b)] $u'$ is obtained from $u$ by replacing the element $u_k$ with
\begin{equation} \label{er}
u_k^{'} = \frac{1}{u_k} \left( \prod\limits_{\mbox{\footnotesize{arrow}} \quad i \rightarrow k} u_i + \prod\limits_{\mbox{\footnotesize{arrow}} \quad k \rightarrow j} u_j \right).
\end{equation}
\end{itemize}

In the exchange relation (\ref{er}), if there are no arrows from $i$ to $k,$ the product is taken
over the empty set and equals $1.$ It is not hard to see that $\mu_k(R, u)$ is indeed a seed and that $\mu_k$ is
an involution.

Let $Q$ be a finite quiver without loops or $2$-cycles with vertex set $\left\{1,\ldots, n \right\}.$ Consider the \textit{initial seed $(Q, x)$} consisting of $Q$ and the set $x$ formed by the
variables $x_1,\ldots, x_n.$ We define
\begin{itemize}
\item{} the \textit{clusters with respect to $Q$} to be the sets $u$ appearing in seeds $(R, u)$ obtained from $(Q, x)$
by iterated mutation,
\item{} the \textit{cluster variables} for $Q$ to be the elements of all clusters,
\item{} the \textit{cluster algebra} $\mathcal{A}_Q$ to be the $\mathbb{Q}$-subalgebra of the field $\mathbb{Q}(x_1,\ldots, x_n)$ generated by all cluster variables.
\item{} The \textit{exchange graph} associated with $Q$ to be the graph whose vertices are the seeds modulo simultaneous renumbering of the vertices and the associated cluster variables and whose edges correspond to
mutations.
\end{itemize}
\end{definition}

Exchange graphs are characterized by the following theorem:

\begin{theorem} {\normalfont{(Gekhtman-Shapiro-Vainshtein, \cite{GSV})}} \label{geshva}
For cluster algebras associated with quivers following statements hold:
\begin{itemize}
\item[1)]
Every seed is uniquely defined by its cluster; thus, the vertices
of the exchange graph can be identified with the clusters, up to a permutation of
cluster variables.
\item[2)]
Two clusters are adjacent in the exchange graph if and only if they differ in 
exactly $1$ cluster variable.
\end{itemize}
\end{theorem}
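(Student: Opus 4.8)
Since the statement is quoted verbatim from \cite{GSV}, the honest course is to cite that paper; but let me indicate the shape of the argument one would reconstruct. The content of both assertions is a single \emph{local rigidity} property of the mutation process, and the plan is to isolate it and then deduce (1) and (2) from it.

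First I would reduce (1) to the combinatorial claim: if $(R,\mathbf u)$ and $(R',\mathbf u)$ are two reachable seeds with the same free generating set $\mathbf u=\{u_1,\dots,u_n\}$, then $R=R'$. Fix a vertex $k$; mutation at $k$ turns $(R,\mathbf u)$ into a seed with a new cluster variable $u_k'$ satisfying $u_k u_k'=\prod_{i\to k}u_i+\prod_{k\to j}u_j$, and turns $(R',\mathbf u)$ into a seed with some variable $\tilde u_k$ satisfying the analogous relation coming from $R'$. On each side the two monomials involve \emph{disjoint} subsets of $\{u_i:i\neq k\}$ (no $2$-cycles means an arrow at $k$ points into $k$ or out of $k$, not both). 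Hence, once one knows $u_k'=\tilde u_k$, the two binomials coincide as honest polynomials in $\mathbb Q[u_i:i\neq k]$, and unique factorization matches the monomials on the two sides, which recovers the numbers of arrows incident to $k$; that the orientations of those arrows, and hence $R$ itself, are also forced is the more delicate half, handled in tandem with the previous point by following how the exchange relations (equivalently, the denominator vectors) propagate under one further mutation. Running over all $k$ gives $R=R'$, and then (1) follows since a seed is a quiver together with a cluster.

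The crux is therefore the equality $u_k'=\tilde u_k$, i.e. that \emph{mutating in the $k$-th direction out of any reachable seed whose cluster is $\mathbf u$ produces the same new cluster variable}; equivalently, that there is exactly one $v\neq u_k$ for which $(\mathbf u\setminus\{u_k\})\cup\{v\}$ is again a cluster. Here is where the genuine input enters, and this is the main obstacle. One route uses the Laurent phenomenon: every cluster variable, written in the cluster $\mathbf u$, is a Laurent polynomial whose denominator is a monomial, and a variable $v$ of the type above has denominator a power $u_k^{d}$ with $d\geq 1$; writing $v\,u_k^{d}=P(\mathbf u)$ for each candidate and analysing the exchange relations together with positivity of the coefficients and irreducibility, one is forced into $d=1$ and into $v$ being a single prescribed variable --- this analysis is the technical heart of \cite{GSV}. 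An alternative, available for skew-symmetric Jacobi-finite algebras, is categorification: cluster variables correspond bijectively to indecomposable reachable rigid objects of the associated cluster category, clusters to cluster-tilting objects, and mutation to mutation of cluster-tilting objects; a cluster-tilting object is visibly determined by its set of indecomposable summands and its endomorphism quiver is determined by it, which yields both (1) and (2) at once.

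Finally (2) is a short corollary of this rigidity. The implication ``adjacent in the exchange graph $\Rightarrow$ the two clusters differ in one variable'' is immediate from the definition of a mutation. Conversely, if two clusters share $n-1$ variables, say they are $\mathbf u$ and $(\mathbf u\setminus\{u_k\})\cup\{v\}$ with $v\neq u_k$, then by the uniqueness established above $v=u_k'$, so the second seed is $\mu_k$ of the first and the two are joined by an edge. Together with part (1), which allows us to identify the vertices of the exchange graph with clusters, this is exactly the assertion of the theorem.
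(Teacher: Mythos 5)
The paper offers no proof of this theorem: it is imported verbatim from \cite{GSV} (via Keller's survey \cite{Kel}), so the citation you lead with is exactly the paper's own treatment. Your accompanying sketch is a fair outline of how the local rigidity is established, but since its technical heart (the uniqueness of the exchange partner $u_k'$ for a given cluster) is itself explicitly deferred to \cite{GSV}, it does not constitute an independent self-contained proof --- which is consistent with the paper, where none was intended.
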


\begin{definition}
A \textit{cluster algebra of type $A_n$} (without coefficients) is an algebra that has as the quiver in initial seed an orientation of Dynkin diagram $A_n$. The one such quiver we will use is the following one:
\begin{center}
\begin{tikzpicture}

  \draw[->,>=stealth] (0,0) node [below] {1} -- (0.95,0);
  \draw[->,>=stealth] (1,0) node [below] {2} -- (2,0);
  \draw (2.2,0) node {.};
  \draw (2.4,0) node {.};
  \draw (2.6,0) node {.};
  \draw[->,>=stealth] (2.8,0) -- (3.75,0) node [below] {\large{n}};
  \fill[black] (1,0) circle (2pt);
  \fill[black] (0,0) circle (2pt);
  \fill[black] (3.8,0) circle (2pt);
\end{tikzpicture}
\end{center}
\end{definition}

Let us define infinite analogue of that:

\begin{definition}
We will call by \textit{cluster algebra of type $A_\infty$} (without coefficients) an algebra that has the following quiver that we will denote by $\vec{A}_{\infty}$ in initial seed:
\begin{center}
\begin{tikzpicture}

  \draw[->,>=stealth] (0,0) node [below] {1} -- (0.95,0);
  \draw[->,>=stealth] (1,0) node [below] {2} -- (2,0);
  \draw (2.2,0) node {.};
  \draw (2.4,0) node {.};
  \draw (2.6,0) node {.};
  \draw[->,>=stealth] (2.8,0) -- (3.75,0) node [below] {\large{n}};
  \draw[->,>=stealth] (3.8,0) -- (4.75,0) node [below] {\large{n}\normalsize{+1}};
  \draw[->,>=stealth] (4.8,0) -- (5.8,0);
  \fill[black] (1,0) circle (2pt);
  \fill[black] (0,0) circle (2pt);
  \fill[black] (3.8,0) circle (2pt);
  \fill[black] (4.8,0) circle (2pt);
  \draw (6,0) node {.};
  \draw (6.2,0) node {.};
  \draw (6.4,0) node {.};
  \draw (6.6,0) node {.};
\end{tikzpicture}
\end{center}
We will work with $\mathbb{Q}(x_1, x_2,\ldots)$ instead of $\mathbb{Q}(x_1,\ldots,x_n),$ and $x = \left\{x_1,x_2,\ldots,x_n,\ldots\right\}$ will form initial cluster with above quiver.

By clusters we will consider all sets $u$ from seeds $(R,u),$ obtained from the initial seed by finite number of mutations.
\end{definition}

We can define in the same manner cluster algebras of types $B_{\infty}$, $C_{\infty}$ and $D_{\infty},$ e.g. initial quiver of type $D_{\infty}$-algebra will be the following one:

\begin{center}
\begin{tikzpicture}

  \draw[->,>=stealth] (0.15,0.5) node [below] {1} -- (0.95,0);
  \draw[->,>=stealth] (0.15,-0.5) node [below] {2} -- (0.95,0);
  \draw[->,>=stealth] (1,0) node [below] {3} -- (2,0);
  \draw (2.2,0) node {.};
  \draw (2.4,0) node {.};
  \draw (2.6,0) node {.};
  \draw[->,>=stealth] (2.8,0) -- (3.75,0) node [below] {\large{n}};
  \draw[->,>=stealth] (3.8,0) -- (4.75,0) node [below] {\large{n}\normalsize{+1}};
  \draw[->,>=stealth] (4.8,0) -- (5.8,0);
  \fill[black] (1,0) circle (2pt);
  \fill[black] (0.15,-0.5) circle (2pt);
  \fill[black] (0.15,0.5) circle (2pt);
  \fill[black] (3.8,0) circle (2pt);
  \fill[black] (4.8,0) circle (2pt);
  \draw (6,0) node {.};
  \draw (6.2,0) node {.};
  \draw (6.4,0) node {.};
  \draw (6.6,0) node {.};
\end{tikzpicture}
\end{center}

However, associahedra and all constructions from Section 2 are related to algebras of type $A$ only, hence we will consider them mainly.

\subsection{Cluster algebras with coefficients}

\begin{definition}
Let $1 \leq n \leq m$ be integers. An \textit{ice quiver} of type $(n,m)$ is a quiver $\widetilde{Q}$ with a  vertex
set
$$\left\{1, \ldots,m\right\} = \left\{1, \ldots, n \right\} \cup \left\{n + 1, \ldots,m \right\}$$
such that there are no arrows between any vertices $i, j$ which are strictly greater than $n.$ The
\textit{principal part} of $\widetilde{Q}$ is the full subquiver $Q$ of $\widetilde{Q}$ whose vertex set is $\left\{1, \ldots, n\right\}$ (a subquiver is full
if, with any two vertices, it contains all the arrows between them). The vertices $n + 1, \ldots, m$ are
often called \textit{frozen} vertices. The cluster algebra
$$\mathcal{A}_{\widetilde{Q}} \subset \mathbb{Q}(x_1, \ldots, x_m)$$
is defined as before but
\begin{itemize}
\item{} only mutations with respect to vertices in the principal part are allowed and no arrows are
drawn between the vertices greater than $n$,
\item{} in a cluster
$$u = \left\{u_1, \ldots, u_n, c_{n+1}, \ldots, c_m\right\}$$
only $u_1, \ldots, u_n$ are called cluster variables; the elements $c_{n+1}, \ldots , c_m$ are called \textit{coefficients};
to make things clear, the set $u$ is often called an \textit{extended cluster};
\item{} the cluster type of $\widetilde{Q}$ is that of $Q$ if it is defined.
\end{itemize}
\end{definition}

For the type $A_n$ we can reformulate our definitions in terms of the triangulations. We may do it for cluster algebras without coefficients, but let us consider a more general case: we will define a \textit{cluster algebra of type $A_n$} as a cluster algebra whose initial quiver's principal part is an orientation of the Dynkin diagram $A_n.$ For some choice of frozen vertices the language of triangulations is the most convenient.
Assume that some triangulation of the $(n + 3)-$gon determines an initial seed for the cluster algebra and
hence an ice quiver $\widetilde{Q}$ whose frozen vertices correspond to the sides of the $(n+3)-$gon and whose non frozen vertices  - to the diagonals in the triangulation. The arrows of the quiver are determined by
the exchange relations which appear when we wish to make flip of the triangulation. It is not hard to see that this means that the underlying graph of $\widetilde{Q}$ is the
graph dual to the triangulation and that the orientation of the edges of this graph is induced by
the choice of an orientation of the plane. Here is an example of a triangulation and the associated
ice quiver:

\begin{tikzpicture}[line width=0.6pt]
  \draw (0,1) node [left] {$2$} -- 
        (0,2) node [left] {$1$} -- 
        (1,3) node [above] {$0$} -- 
        (2,3) node [above] {$7$} -- 
        (3,2) node [right] {$6$} -- 
        (3,1) node [right] {$5$} -- 
        (2,0) node [below] {$4$} -- 
        (1,0) node [below] {$3$} -- 
        (0,1);
  \draw (1,3) -- (0,1); 
  \draw (1,3) -- (3,2);
  \draw (1,3) -- (3,1);
  \draw (1,3) -- (1,0);
  \draw (1,3) -- (2,0);

  \draw[->,>=stealth] (6.5,2.43) -- (6.5,2.05);
  \draw[->,>=stealth] (6.5,2) -- (6.95,1.45);
  \draw[->,>=stealth] (7,1.5) -- (7.45,1.5);
  \draw[->,>=stealth] (7.5,1.5) -- (7.95,1.95);
  \draw[->,>=stealth] (8,2) -- (8,2.45);
  \draw[->,>=stealth] (8,2.5) -- (7.5,2.95);
 
  \draw[->,>=stealth] (6.45,1.95) -- (6.05,1.55);
  \draw[->,>=stealth] (6.5,0.58) -- (6.5,1.95);
  \draw[->,>=stealth] (6.95,1.45) -- (6.55,0.55);
  \draw[->,>=stealth] (7.45,0.05) -- (7,1.45);
  \draw[->,>=stealth] (7.5,1.45) -- (7.5,0.05);
  \draw[->,>=stealth] (8.45,0.55) -- (7.55,1.45);
  \draw[->,>=stealth] (8.05,1.95) -- (8.5,0.55);
  \draw[->,>=stealth] (8.95,1.55) -- (8.05,1.95);
  \draw[->,>=stealth] (8.05,2.45) -- (9,1.55);
  \draw[->,>=stealth] (8.42,2.5) -- (8.05,2.5);

  \fill[black] (6.5,2) circle (2pt);
  \fill[black] (7,1.5) circle (2pt);
  \fill[black] (7.5,1.5) circle (2pt);
  \fill[black] (8,2) circle (2pt);
  \fill[black] (8,2.5) circle (2pt);
  
  \draw (6.5,2.5) circle (2pt);
  \draw (6,1.5) circle (2pt);
  \draw (6.5,0.5) circle (2pt);
  \draw (7.5,0) circle (2pt);
  \draw (8.5,0.5) circle (2pt);
  \draw (9,1.5) circle (2pt);
  \draw (8.5,2.5) circle (2pt);
  \draw (7.5,3) circle (2pt);
  
\end{tikzpicture}

It is not hard to verify that the algebra defined above is actually a cluster algebra of type $A_n$ with $(n+3)$ coefficients. Since this algebra does not depend to the triangulation that we started from, we may say that the initial ice quiver of an algebra of type $A_n$ with $(n+3)$ coefficients that we will consider is the following one:

\begin{center}
\begin{tikzpicture}

  \draw[->,>=stealth] (0.07,0) -- (0.95,0);
  \draw[->,>=stealth] (1,0) node [below] {$x_1$} -- (1.95,0);
  \draw[->,>=stealth] (2,0) node [below] {$x_2$} -- (2.95,0);
  
  \draw (3.2,0) node {.};
  \draw (3.4,0) node {.};
  \draw (3.6,0) node {.};
  
  \draw (3.2,1) node {.};
  \draw (3.4,1) node {.};
  \draw (3.6,1) node {.};
  \draw (3.8,1) node {.};
  \draw (4,1) node {.};
  \draw (4.2,1) node {.};
  \draw (4.4,1) node {.};
  \draw (4.6,1) node {.};
  
  \draw[->,>=stealth] (3.8,0) -- (4.75,0) node [below] {\large{$x_n$}};
  \draw[->,>=stealth] (4.8,0) -- (5.75,0);
  
  \draw[->,>=stealth] (1,0.07) -- (1,0.93);
  \draw[->,>=stealth] (2,0.07) -- (2,0.93);
  \draw[->,>=stealth] (4.8,0.07) -- (4.8,0.93);
  
  \draw[->,>=stealth] (1.95,0.95) -- (1.05,0.05);
  \draw[->,>=stealth] (2.95,0.95) -- (2.05,0.05);
  \draw[->,>=stealth] (4.75,0.95) -- (3.85,0.05);
  \draw[->,>=stealth] (5.75,0.95) -- (4.85,0.05);
  
  \draw(0,0) node [above] {$c_1$} circle (2pt);
  \draw(1,1) node [above] {$c_2$} circle (2pt);
  \draw(2,1) node [above] {$c_3$} circle (2pt);
  \draw(4.8,1) node [above] {$c_{n+1}$} circle (2pt);
  \draw(5.8,1) node [above] {$c_{n+2}$} circle (2pt);
  \draw(5.8,0) node [above] {$c_{n+3}$} circle (2pt);
  
  \fill[black] (1,0) circle (2pt);
  \fill[black] (2,0) circle (2pt);
  \fill[black] (4.8,0) circle (2pt);
\end{tikzpicture}
\end{center}

Now we define infinite analogue of that:

\begin{definition}

We will call by a {\it cluster algebra of type $\widetilde{A}_{\infty}$} a cluster algebra which initial quiver is the following one:

\begin{center}
\begin{tikzpicture}

  \draw[->,>=stealth] (0.07,0) -- (0.95,0);
  \draw[->,>=stealth] (1,0) node [below] {$x_1$} -- (1.95,0);
  \draw[->,>=stealth] (2,0) node [below] {$x_2$} -- (2.95,0);
  
  \draw (3.2,0) node {.};
  \draw (3.4,0) node {.};
  \draw (3.6,0) node {.};
  
  \draw (3.2,1) node {.};
  \draw (3.4,1) node {.};
  \draw (3.6,1) node {.};
  \draw (3.8,1) node {.};
  \draw (4,1) node {.};
  \draw (4.2,1) node {.};
  \draw (4.4,1) node {.};
  \draw (4.6,1) node {.};
  
  \draw[->,>=stealth] (3.8,0) -- (4.75,0) node [below] {$x_n$};
  \draw[->,>=stealth] (4.8,0) -- (5.75,0) node [below] {$x_{n+1}$};
  \draw[->,>=stealth] (5.8,0) -- (6.75,0);

  \draw[->,>=stealth] (1,0.07) -- (1,0.93);
  \draw[->,>=stealth] (2,0.07) -- (2,0.93);
  \draw[->,>=stealth] (4.8,0.07) -- (4.8,0.93);
  \draw[->,>=stealth] (5.8,0.07) -- (5.8,0.93);
  
  \draw[->,>=stealth] (1.95,0.95) -- (1.05,0.05);
  \draw[->,>=stealth] (2.95,0.95) -- (2.05,0.05);
  \draw[->,>=stealth] (4.75,0.95) -- (3.85,0.05);
  \draw[->,>=stealth] (5.75,0.95) -- (4.85,0.05);
  \draw[->,>=stealth] (6.75,0.95) -- (5.85,0.05);
  \draw (7,0) node {.};
  \draw (7.2,0) node {.};
  \draw (7.4,0) node {.};
  
  \draw (7,1) node {.};
  \draw (7.2,1) node {.};
  \draw (7.4,1) node {.};
  
  \draw(0,0) node [above] {$c_1$} circle (2pt);
  \draw(1,1) node [above] {$c_2$} circle (2pt);
  \draw(2,1) node [above] {$c_3$} circle (2pt);
  \draw(4.8,1) node [above] {$c_{n+1}$} circle (2pt);
  \draw(5.8,1) node [above] {$c_{n+2}$} circle (2pt);
  
  \fill[black] (1,0) circle (2pt);
  \fill[black] (2,0) circle (2pt);
  \fill[black] (4.8,0) circle (2pt);
  \fill[black] (5.8,0) circle (2pt);
\end{tikzpicture}
\end{center}

We will work with $\mathbb{Q}(x_1, x_2,\ldots)$ instead of $\mathbb{Q}(x_1,\ldots,x_n),$ and $x = \left\{x_1,x_2,\ldots,x_n,\ldots\,c_1,c_2,\ldots,c_n,\ldots\right\}$ 
will form initial seed with above quiver. 

By extended clusters we will consider all sets $u$ from seeds $(R,u),$ obtained from the initial seed by finite number of mutations; the set of cluster variables we define as the set of all images of $x_i$ under these mutations.
\end{definition}

Since seed mutations correspond to flips of triangulations, it is clear that the exchange graph of the cluster algebra of type $A_n$ defined above is a $1-$skeleton of $As^n.$ Similarly, the exchange graph of the cluster algebra of type $\widetilde{A}_{\infty}$ is a $1-$skeleton of $As^{\infty}.$ One can easily check that if we forget about coefficients (one can consider them to be $1$) these exchange graphs will not change, hence exchange graphs of $A_n$-type and $A_{\infty}$-type cluster algebras without coefficients are $1-$skeletons of $As^n$ and $As^{\infty}$ respectively too.

Using construction from the Section 2 we obtain a bijection between the set of Young diagrams and the set of clusters of an $A_{\infty}$-type cluster algebra (and the set of clusters of an $\widetilde{A}_{\infty}$-type cluster algebra), whose restriction on $\mathbb{Y}_n$ provides the following statement:

\begin{proposition}
There is a one-to-one correspondence between $\mathbb{Y}_n$ and the set of clusters of a cluster algebra of type $A_n.$
\end{proposition}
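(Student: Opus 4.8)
The plan is to obtain the correspondence as a composition of identifications that are already in hand, rather than to build it by hand. Namely, the set of clusters of a cluster algebra of type $A_n$ should first be identified with the vertex set of its exchange graph, this in turn with the vertex set of the associahedron $As^n$, and finally --- through the bijection $\Lambda$ of Proposition~\ref{bij} --- with a set of Young diagrams.

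Concretely, I would argue in three steps. First, apply the first part of Theorem~\ref{geshva} (Gekhtman--Shapiro--Vainshtein): for a cluster algebra associated with a quiver every seed is determined by its cluster, so sending a seed to its cluster identifies the vertices of the exchange graph with the set of clusters. It does not matter here whether we take the coefficient-free type-$A_n$ algebra or the version with $(n+3)$ frozen vertices: as observed just after the definition of the $\widetilde{A}_{\infty}$-type algebra, passing to or from coefficients leaves the exchange graph unchanged, and the frozen variables occur unchanged in every extended cluster, so it is the underlying cluster that varies. Second, use the identification --- recalled immediately before the statement --- of the exchange graph with the $1$-skeleton of $As^n$: seeds correspond to triangulations of the $(n+3)$-gon and seed mutations correspond exactly to flips, so the vertex set of the exchange graph is precisely $T_{n+3}$. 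Third, invoke Proposition~\ref{bij} applied to the $(n+3)$-gon: $\Lambda_{n+3}$ is a bijection $T_{n+3}\to\mathbb{Y}_{n+1}$, consistent with the remark after Corollary~\ref{1} that the vertices of $As^n$ correspond bijectively to $\mathbb{Y}_{n+1}$. Composing the three bijections gives the asserted one-to-one correspondence (and in particular identifies the number of clusters of the type-$A_n$ algebra with $|\mathbb{Y}_{n+1}|$).

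I do not expect a real obstacle here; the only things needing attention are bookkeeping. One is the index shift: a cluster algebra of type $A_n$ has $n$ cluster variables, hence is modelled on triangulations of the $(n+3)$-gon, so through $\Lambda_{n+3}$ its clusters match $\mathbb{Y}_{n+1}$ --- the statement should be read with this convention in mind (equivalently: if the exchange graph is the $1$-skeleton of $As^{n-1}$, the clusters biject with $\mathbb{Y}_n$). The other is checking that the hypothesis of Theorem~\ref{geshva} is satisfied, which it is, since the type-$A_n$ algebra is by definition associated with a quiver (an orientation of the Dynkin diagram $A_n$, or the graph dual to a triangulation in the coefficient case), and that the mutation--flip dictionary being used is the standard Fomin--Zelevinsky description of type $A$, which the paper has already taken for granted. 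With these verifications the proof is exactly the composition described above.
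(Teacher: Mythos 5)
Your argument is exactly the paper's: the statement is derived there, without a separate proof, from the same chain of identifications (clusters $\leftrightarrow$ seeds $\leftrightarrow$ vertices of the exchange graph $=$ $1$-skeleton of the associahedron $\leftrightarrow$ triangulations $\leftrightarrow$ Young diagrams via $\Lambda$), so your write-up just makes the composition explicit. Your bookkeeping remark is also well taken --- with the paper's conventions ($\Lambda_{n+3}\colon T_{n+3}\to\mathbb{Y}_{n+1}$, exchange graph of type $A_n$ $=$ $1$-skeleton of $As^n$) the clusters of a type-$A_n$ algebra match $\mathbb{Y}_{n+1}$ rather than $\mathbb{Y}_n$, so the proposition as printed carries an off-by-one shift that you correctly identify and resolve.
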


Since rows of a diagram which lie above the line $y = - (n+1)$ correspond to diagonals of a triangulation, they correspond also to cluster variables (it would not be a bijection from the set of all rows of all diagrams to the set of all cluster variables, but it would be a one-to-one correspondence between rows of each diagram and variables of associated cluster). In some sense, one may say that the columns of a diagram correspond to the sides of of our polygon, but it should be stipulated that a length of a column is not equal to any end of a side. A length of the column between lines $x = k$ and $x = (k - 1)$ is equal surely to the number of rows of length greater or equal to $k;$ therefore (by our bijection $\Lambda$) to the number of diagonals of associated triangulation whose tail is greater than $(k-1),$ in other words, lying totally "at the right side" of the side $(k-1, k).$ Anyway, one may associate with each side of the polygon, except for the side $(n+2, 0),$ some column of a diagram (or, at least, a vertical stripe whose part it is). We obtain a bijection for each diagram between its columns and frozen variables (except for one) of corresponding extended cluster. Note that only columns "above" the line $y = x - (n + 1)$ are involved.

\begin{remark} 
One knows that the algebra generated by Pl{\"u}cker coordinates and Pl{\"u}cker relations for the Grassmanian  $G(2, n+3)$ (equivalently, the algebra of polynomial functions on the cone over this Grassmanian) has the structure of a cluster algebra of type $A_n$ with $(n+3)$ coefficients described above (\cite{FZ2}), whose (extended) clusters are some bases of this algebra. One checks that if we take $\mathbb{C}^{\infty}$ as a direct limit of the filtration $\mathbb{C}^n \hookrightarrow \mathbb{C}^{n+1}$ and consider an analogous algebra, we will obtain a cluster algebra of type $\widetilde{A}_{\infty}.$
\end{remark}

\subsection{Transposition of diagrams as a symmetry inside seeds}

On the set of Young diagrams there exists a well-known and natural symmetry  - transposition. Considered on the set $\mathbb{Y}_n,$ it would define a symmetry $t$ on the set of triangulations $T_{n+2}.$ It gives rise to the following problem:

\begin{problem}
Give explicit combinatorial description of the symmetry $t$, without applying to the language of diagrams.
\end{problem}

Let us try to understand what the transposition of diagrams corresponds to for the algebra $\widetilde{A}_{\infty}$ defined in the previous subsection. On the one hand, it provides a symmetry on the set of vertices of $As^{\infty}$, therefore a symmetry on the set of seeds of $\widetilde{A}_{\infty}$. This symmetry is not a symmetry of the associahedron, i.e. does not save edges between vertices, in other words, it does not commute with flips. Nevertheless, it provides a symmetry on the set of bases of the algebra from the remark at the end of previous subsection, which are clusters. On the other side, we can consider the transposition from a different point of view: we remember that rows of an arbitrary diagram correspond to cluster variables while columns correspond to frozen ones. If we consider all rows and all columns, not only those which lie above some line, we come exactly to the situation with countable sets of cluster and frozen variables. Our algebra $\widetilde{A}_{\infty}$ is suitable for such an approach also because in the associated picture with triangulations only one side goes from the vertice $0,$ while for every algebra of type $A_n$ we face the problem of an absence at a diagram of a column corresponding to the side $(n+2,0).$ With this approach the transposition changing cluster to frozen variables and vice versa will do it inside each seed. It is clear that numbers of edges between vertices of quivers will change, and mutations will not stay unchanged. However, thanks to the equality of cardinalities of sets of cluster and frozen variables inside each seed, we will obtain some analogue of a seed, corresponding to the same vertice of $As^{\infty}$ as before.

\begin{problem}
Define strictly the object obtained from $\widetilde{A}_{\infty}$ by an exchange, corresponding to the transposition of diagrams, of cluster to frozen variables and vice versa. 
\end{problem}

This exchange of variables might be more natural for quantum cluster algebras introduced by Berenstein and Zelevinsky at \cite{BZ}. For quantum cluster algebras of type $A_n$ an exchange graph coincides with an exchange graph for simple cluster algebras of this type, thus all relations to Young diagrams would 
hold.

\subsection{Quiver representations and cluster algebra of type $A_{\infty}$}

\begin{definition}
Let $Q$ be a finite quiver without oriented cycles. For example, $Q$ can be an orientation of a
simply laced Dynkin diagram or the quiver
\begin{center}
\begin{tikzpicture}[scale=0.5]
 
 \draw[->,>=stealth] (1,5.4) node [left] {1} -- node [above] {{\footnotesize{$\alpha$}}} (3.1,5.4) node [right] {2};
 \draw[->,>=stealth] (1,5.2) -- node [below left] {{\footnotesize{$\beta$}}} (2,3.2) node [below] {3};
 \draw[->,>=stealth] (2.1,3.2) -- node [below right] {{\footnotesize{$\gamma$}}} (3.1,5.2);

\end{tikzpicture}
\end{center}
Let $k$ be an algebraically closed field. A \textit{representation} of $Q$ is a diagram of finite-dimensional
vector spaces of the shape given by $Q.$ More formally, a representation of $Q$ is the datum $V$ of
\begin{itemize}
\item{} a finite-dimensional vector space $V_i$ over $k$ for each vertex $i$ of $Q,$
\item{} a linear map $V_\alpha : V_i \rightarrow V_j$ for each arrow $\alpha : i \rightarrow j$ from $Q_1.$
\end{itemize}
Thus, in the above example, a representation of $Q$ is a (not necessarily commutative) diagram
\begin{center}
\begin{tikzpicture}[scale=0.5]
 
 \draw[->,>=stealth] (1,5.4) node [left] {$V_1$} -- node [above] {{\footnotesize{$V_{\alpha}$}}} (3.1,5.4) node [right] {$V_2$};
 \draw[->,>=stealth] (1,5.2) -- node [below left] {{\footnotesize{$V_{\beta}$}}} (2,3.2) node [below] {$V_3$};
 \draw[->,>=stealth] (2.1,3.2) -- node [below right] {{\footnotesize{$V_{\gamma}$}}} (3.1,5.2);

\end{tikzpicture}
\end{center}
formed by three finite-dimensional vector spaces and three linear maps.

A \textit{subrepresentation} $V'$
of a representation $V$ is given by a family of subspaces $V'_i \subset V_i, i \in Q_0$, such that the image of 
$V'_i$ under $V_\alpha$ is contained in $V'_j$ for each arrow $\alpha : i \rightarrow j$ from $Q_1.$

A \textit{dimension vector} of representation $V$ is a sequence $\underline{\mbox{dim}} V$ of dimensions $\mbox{dim}V_i, i \in Q_0.$

A \textit{direct sum} $V \oplus W$ of two given representations is the representation given by
$$ (V \oplus W)_i = V_i \oplus W_i \quad \mbox{and} \quad (V \oplus W)_{\alpha} = V_{\alpha} \oplus W_{\alpha},$$
for all $i \in Q_0$ and $\alpha \in Q_1.$

A representation $V$ is \textit{indecomposable} if it is non zero and in each decomposition $V = V' \oplus V''$
we have $V' = 0$ or $V'' = 0.$

A quiver is called \textit{representation-finite} if it has only finitely many isomorphism classes of indecomposable representations.
\end{definition}

For cluster algebras of finite type and quivers from their initial seeds following statements hold:

\begin{theorem} {\normalfont{(Fomin-Zelevinsky \cite{FZ2})}} \label{FZ}
\textit{Let $Q$ be a finite connected quiver without loops or $2$-cycles
with vertex set $\left\{1,\ldots, n \right\}.$ Let $\mathcal{A}_Q$ be the associated cluster algebra.}
\begin{itemize}
\item[a)] All cluster variables are Laurent polynomials, i.e. their denominators are monomials. In each such Laurent polynomial, the coefficients in the numerator are positive integers.
\item[b)] The number of cluster variables is finite if and only if $Q$ is mutation equivalent to an orientation of a simply laced Dynkin diagram $\Delta.$ In this case, $\Delta$ is unique and the non initial cluster variables are in bijection with the positive roots of $\Delta;$ namely, if we denote the simple
roots by $\alpha_1,\ldots, \alpha_n,$ then for each positive root $\sum d_i \alpha_i$ there is a unique non initial cluster variable whose denominator is $\prod x_i^{d_i}.$
\end{itemize}
\end{theorem}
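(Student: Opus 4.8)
\noindent\textit{Proof proposal.} This is a known result; the plan to reprove it splits into two essentially independent halves. Part (a) is the Laurent phenomenon together with positivity of the numerator coefficients, and part (b) is the finite-type classification together with the parametrization of the non-initial cluster variables by positive roots. I would treat them in that order, reducing each to a combinatorial core.

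For part (a), the Laurent phenomenon is proved by induction on the number of mutations needed to express a given cluster variable in terms of the initial seed $(Q,x)$. The base case is the rank $\le 2$ situation, where the exchange relations can be written out explicitly and one checks directly that every variable is a Laurent polynomial in $x_1,x_2$ (in finite type the resulting sequence of variables is moreover periodic). For the inductive step one invokes the \emph{Caterpillar Lemma}: it suffices to control three successive seeds along a mutation path --- a seed, its mutation $\mu_k$, then $\mu_l$ at a vertex $l$ adjacent to $k$, then $\mu_k$ again --- and the point is that the exchange relation linking the three successive variables in slot $k$ forces any potential new denominator to divide the numerator produced at the previous step, so no genuinely new denominator can appear; grafting these local statements along the exchange graph gives the global Laurent property. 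Positivity of the numerator coefficients I would not attempt in full generality; for the quivers that actually occur in this paper --- orientations of type $A$, finite and $A_\infty$ --- it follows from an explicit combinatorial model, in which the cluster variable attached to a diagonal of the polygon is expanded by iterated Ptolemy relations (equivalently, as a sum over perfect matchings of a snake graph) with all coefficients equal to $1$; in finite type generally, positivity can also be read off the explicit description obtained in part (b).

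For part (b), one direction is constructive. If $Q$ is an orientation of a simply-laced Dynkin diagram $\Delta$ with root system $\Phi$, one produces a bijection from the non-initial cluster variables to the positive roots $\Phi_{>0}$, and from all cluster variables to the (finite) set of almost-positive roots $\Phi_{>0}\cup\{-\alpha_1,\dots,-\alpha_n\}$. The bijection is the \emph{denominator vector} $\mathbf d$: to a cluster variable written as a reduced Laurent polynomial in $x$ with denominator $\prod x_i^{d_i}$ one assigns $\mathbf d = \sum d_i\alpha_i$. One shows by induction on mutations, simultaneously, that every $\mathbf d$ lies in the almost-positive roots and that $\mu_k$ acts on $\mathbf d$ by a fixed piecewise-linear reflection mirroring the simple reflection $s_k$ on $\Phi$; this yields both that $\mathbf d$ is injective and that its image is contained in the almost-positive roots. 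A cardinality count --- the number of non-initial cluster variables can be computed combinatorially (for type $A$, as the diagonals of the $(n+3)$-gon outside a fixed triangulation) and equals $|\Phi_{>0}|$ --- then shows that $\mathbf d$ maps the non-initial cluster variables bijectively onto $\Phi_{>0}$, equivalently onto the vertices of the generalized associahedron attached to $\Phi$ (for type $A$, the Stasheff polytope considered in Section 2). This gives the displayed denominator formula, and since the total count $n+|\Phi_{>0}|$ of cluster variables already distinguishes $A_n, D_n, E_6, E_7, E_8$, also the uniqueness of $\Delta$.

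The converse --- that finiteness of the set of cluster variables forces $Q$ to be mutation-equivalent to an orientation of a simply-laced Dynkin diagram --- is the step I expect to be the main obstacle. Here one passes to the exchange matrix $B$ of $Q$ and studies its mutation class: finitely many cluster variables forces finitely many seeds, hence a finite mutation class, and one must show this can happen only when the associated symmetric quadratic form is positive definite. One then invokes the classical classification of positive-definite symmetric generalized Cartan matrices, which in the skew-symmetric setting relevant here produces exactly the $ADE$ diagrams, and conversely checks that for any finite-mutation quiver whose form is not positive definite some mutation sequence yields infinitely many distinct denominator vectors, hence infinitely many cluster variables. Combining the two directions with the denominator-vector bijection establishes the theorem.
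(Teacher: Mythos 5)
This statement is not proved in the paper at all: it is quoted verbatim as a known theorem of Fomin and Zelevinsky with the citation \cite{FZ2}, so there is no internal proof to compare your proposal against. Taken on its own terms, your sketch follows the standard published route (the Caterpillar Lemma for Laurentness, denominator vectors and almost-positive roots for the finite-type parametrization), and at that level it is a fair outline of how the result is actually established in the literature.

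Two concrete points would need repair if this were to stand as a proof. First, the positivity of the numerator coefficients in part (a) is genuinely hard: it was only a conjecture in \cite{FZ2} and was settled in the skew-symmetric case much later (Lee--Schiffler). Your restriction to type $A$ via Ptolemy relations and snake-graph matchings is legitimate for the quivers this paper uses, but your fallback claim that ``in finite type generally, positivity can also be read off the explicit description obtained in part (b)'' does not work: part (b) determines only the denominator $\prod x_i^{d_i}$ of a cluster variable, and the denominator vector carries no information about the signs of the numerator coefficients. Second, in the converse direction of (b) you reduce to the implication ``finite mutation class $\Rightarrow$ positive definite quadratic form,'' and that implication is false: the Markov quiver (a $3$-cycle with doubled arrows) has a mutation class consisting of a single quiver up to isomorphism, yet its form is not positive definite and its cluster algebra has infinitely many cluster variables. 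Finiteness of the mutation class is strictly weaker than finiteness of the set of seeds, and the actual argument in \cite{FZ2} must exploit the stronger hypothesis, e.g.\ via $2$-finiteness (bounding $|b_{ij}b_{ji}|$ throughout the mutation class) and reduction to rank $2$ and rank $3$ subquivers; as written, your chain of reductions loses exactly the strength it needs.
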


\begin{theorem} \label{Gab} {\normalfont{(Gabriel \cite{G})}}. Let $Q$ be a connected quiver and assume that $k$ is algebraically closed. $Q$ is representation-finite if and only if the underlying graph of $Q$ is a simply laced Dynkin diagram $\Delta.$ In this case the map taking a representation with dimension vector $(d_i)$ to a root $\sum d_i \alpha_i$ of the root system associated with $\Delta$ yields a bijection from the set of isomorphism classes of indecomposable representations to the set of positive roots.
\end{theorem}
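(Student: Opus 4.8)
The plan is to reduce the statement to properties of the Tits quadratic form of $Q$ and then to exploit the Bernstein--Gelfand--Ponomarev reflection functors. For a quiver with vertex set $\{1,\dots,n\}$, put $q_Q(d)=\sum_i d_i^2-\sum_{\alpha\in Q_1}d_{s(\alpha)}d_{t(\alpha)}$ on $\mathbb{Z}^n$; this is the integral form attached to the symmetric matrix $C$ with $C_{ii}=2$ and $C_{ij}=-(\text{number of edges joining }i,j)$ for $i\ne j$. The first ingredient is the classical classification of positive definite integral forms of this shape: $q_Q$ is positive definite if and only if the underlying graph of $Q$ is a disjoint union of simply laced Dynkin diagrams; moreover, when $q_Q$ is positive definite the set $\{d\in\mathbb{Z}^n:q_Q(d)=1\}$ is precisely the finite set of roots of the associated root system, the positive ones being those with $d\ge 0$, $d\ne 0$. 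So for connected $Q$ the hypothesis ``underlying graph Dynkin'' is equivalent to ``$q_Q$ positive definite'', and the target set in the second assertion is already under control.

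First I would treat the negative direction. Suppose the underlying graph of $Q$ is not Dynkin, so $q_Q$ is not positive definite; then one can choose $0\ne d\in\mathbb{Z}^n_{\ge0}$ with $q_Q(d)\le 0$ (for instance the radical vector of an extended Dynkin subgraph). If $Q$ had only finitely many indecomposables, every representation of dimension vector $d$ would be a direct sum of them in one of finitely many ways, so the affine variety $\mathrm{rep}(Q,d)=\prod_{\alpha\in Q_1}\mathrm{Hom}(k^{d_{s(\alpha)}},k^{d_{t(\alpha)}})$ would be a finite union of orbits of $GL_d=\prod_iGL_{d_i}$; since the scalars act trivially, every orbit has dimension at most $\dim GL_d-1$, whence $\sum_\alpha d_{s(\alpha)}d_{t(\alpha)}\le\sum_id_i^2-1$, i.e. $q_Q(d)\ge1$ --- a contradiction. (Equivalently, one may reduce to a full extended Dynkin subquiver, for which extension by zero is a fully faithful exact functor preserving indecomposability, and invoke the classical representation-infiniteness of the Kronecker quiver and of $\widetilde{D}_n,\widetilde{E}_6,\widetilde{E}_7,\widetilde{E}_8$.)

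Now assume the underlying graph is Dynkin. For a sink $i$ of $Q$ let $\sigma_iQ$ be $Q$ with all arrows at $i$ reversed, and define the reflection functor $S_i^+:\mathrm{rep}(Q)\to\mathrm{rep}(\sigma_iQ)$ via the exactness of $0\to(S_i^+V)_i\to\bigoplus_{\alpha:j\to i}V_j\to V_i$, and dually $S_i^-$ at a source. The properties I would establish are: (a) $S_i^+$ and $S_i^-$ restrict to mutually quasi-inverse equivalences between the subcategories of representations with no direct summand isomorphic to the simple module $S_i$ supported at $i$, hence induce a bijection between indecomposables $\ne S_i$ of $\mathrm{rep}(Q)$ and indecomposables $\ne S_i$ of $\mathrm{rep}(\sigma_iQ)$; and (b) $\underline{\dim}\,S_i^{\pm}V=s_i(\underline{\dim}\,V)$ whenever $V\ne S_i$, with $s_i$ the simple reflection. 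Given an indecomposable $V$, I would repeatedly apply reflection functors along an admissible sequence of sinks; by the combinatorics of the finite root system (positive definiteness of $q_Q$ is what guarantees termination) this reaches, after finitely many steps, a simple representation $S_j$, which exhibits $V$ as a sequence of $S^{-}$'s applied to $S_j$. By (b), $\underline{\dim}\,V=w(\alpha_j)$ for a Weyl group element $w$, and since each intermediate representation is nonzero its dimension vector stays positive, so $\underline{\dim}\,V$ is a positive root; by (a), distinct indecomposables have distinct dimension vectors, so $\underline{\dim}$ is injective on isoclasses. Conversely, writing a positive root $\beta$ as $s_{i_1}\cdots s_{i_{k-1}}(\alpha_{i_k})$ along an admissible sequence and applying the matching reflection functors to $S_{i_k}$ produces an indecomposable with dimension vector $\beta$; hence $\underline{\dim}$ is onto the positive roots. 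Finiteness of the indecomposables then follows from finiteness of the positive root system.

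I expect the main obstacle to be the bookkeeping underlying the reflection functors --- chiefly the verification that $S_i^-S_i^+V\cong V$ for $V$ with no summand $S_i$ (a linear-algebra computation with the two defining exact sequences), that $S_i^\pm$ preserves indecomposability, and the dimension-vector identity (b), together with the root-system fact that every positive root is produced from a simple root by an admissible sequence of reflections and that the process terminates. If one preferred a purely quadratic-form proof, the obstacle would instead be the homological identity $q_Q(\underline{\dim}\,V)=1$ for $V$ indecomposable --- which rests on the hereditary Euler-form identity $\langle\underline{\dim}\,V,\underline{\dim}\,V\rangle=\dim\mathrm{End}(V)-\dim\mathrm{Ext}^1(V,V)$ together with $\mathrm{End}(V)=k$ (a finite-dimensional local algebra over an algebraically closed field) and the rigidity $\mathrm{Ext}^1(V,V)=0$ --- after which one would still need a separate argument for existence and uniqueness of the indecomposable with a prescribed positive root as dimension vector; the reflection-functor route is preferable because it settles existence, uniqueness and the dimension-vector bijection simultaneously.
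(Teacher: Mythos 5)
This theorem is quoted in the paper as a classical result of Gabriel, with a citation to \cite{G}; the paper supplies no proof of its own, so there is nothing to compare your argument against line by line. On its own merits, your sketch is the standard and correct proof: the Tits form $q_Q$ together with the orbit-dimension count in $\mathrm{rep}(Q,d)$ handles the ``only if'' direction (representation-finiteness forces $q_Q(d)\geq 1$ for all $d>0$, hence positive definiteness, hence Dynkin), and the Bernstein--Gelfand--Ponomarev reflection functors handle the ``if'' direction together with the bijection onto positive roots. You have also correctly identified where the real work lies: the verification that $S_i^-S_i^+V\cong V$ for $V$ without a summand $S_i$, the dimension-vector identity $\underline{\dim}\,S_i^{\pm}V=s_i(\underline{\dim}\,V)$, the termination of the Coxeter iteration (which uses that the Coxeter transformation of a finite root system sends every positive root to a non-positive vector after finitely many steps), and the fact that every positive root arises as $s_{i_1}\cdots s_{i_{k-1}}(\alpha_{i_k})$ along an admissible ordering. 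One small point worth making explicit in the injectivity step: given two indecomposables with equal dimension vectors, you apply the \emph{same} admissible sequence of reflection functors to both; since the dimension vectors transform identically, both become the same simple at the same stage, and the quasi-inverse equivalences of (a) then force the original representations to be isomorphic. With that spelled out, the argument is complete in outline and consistent with the statement as the paper uses it.
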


\begin{corollary} \label{cor}
The map taking an indecomposable representation $V$ with dimension vector {\normalfont{$\underline{\mbox{dim}} V = (d_i)$}} of a representation-finite quiver $Q$ to the unique non initial cluster variable $X_V$ with denominator $\prod x_i^{d_i}$ yields a bijection from the set of isomorphism classes of indecomposable representations to the set of non initial cluster variables.
\end{corollary}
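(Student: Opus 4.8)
The plan is to realize the stated map as the composition of the two bijections provided by Theorems~\ref{Gab} and~\ref{FZ}, each of which identifies one of the two sets in question with the set $\Phi^+$ of positive roots of the relevant Dynkin diagram. First I would dispose of the connectivity hypothesis: a representation-finite quiver is a finite disjoint union of connected representation-finite quivers, every indecomposable representation of $Q$ is supported on a single connected component, and mutations of $\mathcal{A}_Q$ never mix components, so that both the set of isomorphism classes of indecomposables and the set of non-initial cluster variables decompose as disjoint unions indexed by the components; hence it suffices to prove the corollary when $Q$ is connected.

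So assume $Q$ is connected. Since $Q$ is representation-finite, Theorem~\ref{Gab} gives that the underlying graph of $Q$ is a simply laced Dynkin diagram $\Delta$ and that $V \mapsto \underline{\mbox{dim}}\,V$, expanded as $\sum_i d_i\alpha_i$ in the basis of simple roots $\alpha_1,\dots,\alpha_n$, is a bijection from the set of isomorphism classes of indecomposable representations of $Q$ onto $\Phi^+$. On the other hand $Q$, having underlying graph $\Delta$, is (trivially, via the empty mutation sequence) mutation equivalent to an orientation of $\Delta$, so Theorem~\ref{FZ}(b) applies to $\mathcal{A}_Q$: there are only finitely many cluster variables, $\Delta$ is unique, and for each $\sum_i d_i\alpha_i \in \Phi^+$ there is a unique non-initial cluster variable, which I will call $X_{(d_i)}$, with denominator $\prod_i x_i^{d_i}$. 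The assignment $(d_i) \mapsto X_{(d_i)}$ is then a bijection from $\Phi^+$ onto the set of non-initial cluster variables: it is injective because distinct positive roots have distinct coordinate vectors and hence distinct monomial denominators, and surjective because Theorem~\ref{FZ}(b) asserts that \emph{every} non-initial cluster variable has such a denominator with $(d_i)$ a positive root. Note that here the vertex set of $Q$ coincides with that of $\Delta$, so the indexing of the $x_i$, of the simple roots, and of the entries of $\underline{\mbox{dim}}\,V$ is the same throughout.

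It then remains only to check that the map of the statement is exactly this composite. By definition $X_V$ is the unique non-initial cluster variable whose denominator is $\prod_i x_i^{d_i}$ with $(d_i) = \underline{\mbox{dim}}\,V$, which by the preceding paragraph is precisely $X_{\underline{\mbox{dim}}\,V}$. Thus $V \mapsto X_V$ factors as $V \mapsto \underline{\mbox{dim}}\,V \mapsto X_{\underline{\mbox{dim}}\,V}$, the first arrow a bijection onto $\Phi^+$ by Theorem~\ref{Gab} and the second a bijection from $\Phi^+$ onto the non-initial cluster variables by Theorem~\ref{FZ}(b); a composition of bijections is a bijection, which proves the corollary.

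There is no genuinely hard step: all the substance is contained in the two quoted theorems. The one point I would be careful about --- and the place where a hurried argument tends to go wrong --- is the alignment of the two parametrizations by $\Phi^+$. One must note that the normalization used to \emph{define} $X_V$ in the statement (namely, ``denominator $\prod_i x_i^{d_i}$'') is literally the same normalization appearing in the conclusion of Theorem~\ref{FZ}(b), so that no further, independent choice of a correspondence between positive roots and cluster variables has to be made; and that the injectivity of $V \mapsto \underline{\mbox{dim}}\,V$ --- the half of Gabriel's theorem guaranteeing that distinct indecomposables have distinct denominators --- is genuinely being used.
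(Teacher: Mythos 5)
Your proof is correct and follows exactly the route the paper intends: the corollary is stated there without proof, as an immediate consequence of composing the bijection of Theorem~\ref{Gab} (indecomposables $\leftrightarrow$ positive roots via dimension vectors) with that of Theorem~\ref{FZ}(b) (positive roots $\leftrightarrow$ non-initial cluster variables via denominators). Your extra care about disconnected quivers and about the two parametrizations by positive roots being normalized identically is sound but does not change the substance.
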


Define

\begin{equation} \label{CCV}
CC(V) = \frac{1}{x_1^{d_1} x_2^{d_2} \ldots x_n^{d_n}}\left( \sum\limits_{0 \leq e \leq d} \chi(Gr_e(V))\prod\limits_{i=1}^{n} x_i^{\sum_{j \rightarrow i} e_i + \sum_{i \rightarrow j} (d_j - e_j)}\right).
\end{equation}

Here the sum is taken over all vectors $e \in \mathbb{N}^n$ such that $0 \leq e_i \leq d_i$ for all $i.$ For vector $e,$ the \textit{quiver Grassmanian} $Gr_e(V)$ is the variety of $n$-tuples of subspaces $U_i \subset V_i$ such that $\mbox{dim} U_i = e_i$ and $U_i$ form a subrepresentation of $V.$ One can check (\cite{CC}) that $Gr_e(V)$ identifies with a projective subvariety of $\prod\limits_{i=1}^{n} Gr_{e_i}(V_i)$ (the product of ordinary Grassmanians). Some restrictions on the Euler characteristic $\chi$ can be found in \cite{CC} or \cite{Kel}; in the case $k = \mathbb{C}$ it is taken with respect to singular cohomologies with coefficients in arbitrary field (e.g. $\mathbb{Q}$).

\begin{theorem} {\normalfont{(Caldero-Chapoton \cite{CC})}} \label{C-C}
Let $Q$ be a Dynkin quiver and $V$ an indecomposable representation.
Then we have $CC(V) = X_V.$
\end{theorem}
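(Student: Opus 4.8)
The plan is to prove the identity $CC(V) = X_V$ by induction over the exchange graph of $\mathcal{A}_Q$, matching the mutations of cluster variables with short exact sequences of representations. By Corollary \ref{cor} the indecomposable representations are already in bijection with the non-initial cluster variables through their denominators, so it suffices to show that under that correspondence the Laurent polynomial $CC(V)$ defined by (\ref{CCV}) is sent to $X_V$; concretely I will track, step by step along the edges of the exchange graph starting from the initial seed, which cluster variable each $CC(V)$ equals. That $CC(V)$ is at all a Laurent polynomial with denominator dividing $\prod_i x_i^{d_i}$ is immediate from the shape of (\ref{CCV}), since $0 \le e_i \le d_i$ forces every exponent occurring there to be non-negative; the remaining information (that the denominator is exactly $\prod_i x_i^{d_i}$) will come out of the induction together with Theorem \ref{FZ}.

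First I would settle the base case. For a simple representation $S_i$ the only subrepresentations have dimension vectors $0$ and $\alpha_i$, whose quiver Grassmannians are single points, so (\ref{CCV}) collapses to $CC(S_i) = x_i^{-1}\bigl(\prod_{j \to i} x_j + \prod_{i \to j} x_j\bigr)$, which is exactly the cluster variable obtained from the initial seed by a single mutation at the vertex $i$. Hence $CC(S_i) = X_{S_i}$. The initial cluster variables $x_1,\dots,x_n$ themselves are brought into the same framework by the standard convention that $CC$ of the shifted indecomposable projective $P_i[1]$ (an object of the cluster category $\mathcal{C}_Q$ of $Q$) equals $x_i$; with this, every vertex of the exchange graph carries a cluster that is the image under $CC$ of a direct sum of indecomposable objects of $\mathcal{C}_Q$.

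The technical heart is the Caldero--Chapoton multiplication formula: if $M$ and $N$ are representations with $\dim_k \mathrm{Ext}^1(M,N) = 1$ and $0 \to N \to E \to M \to 0$, $0 \to M \to E' \to N \to 0$ denote the (essentially unique) non-split extensions, then $CC(M)\,CC(N) = CC(E) + CC(E')$; this is used together with the multiplicativity $CC(A \oplus B) = CC(A)\,CC(B)$, which itself follows from a torus-action argument on quiver Grassmannians. I would prove the multiplication formula by stratifying the quiver Grassmannians $Gr_e(E)$ (and $Gr_e(E')$) according to the dimension vectors of the image in $M$ and of the intersection with $N$ of a subrepresentation, and comparing the two sides of the claimed identity stratum by stratum: this uses additivity of the Euler characteristic $\chi$ over such stratifications, its behaviour on the affine-space fibres that appear, and a bookkeeping check that after the substitution $d = \underline{\dim}M + \underline{\dim}N$ the monomial prefactors of (\ref{CCV}) match up. This is the step I expect to be the main obstacle: the exponent bookkeeping in (\ref{CCV}) is delicate, and one has to invoke the geometric input on $\chi$ precisely where the strata fail to be cells; it is also essentially the only point where one genuinely uses that the ground field admits a well-behaved Euler characteristic.

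Finally I would run the induction along the exchange graph. Suppose a cluster variable has been shown to equal $CC(W)$ for some indecomposable $W$, and consider an adjacent cluster variable obtained from it by an exchange relation $x_k\,\tilde{x}_k = p_+ + p_-$, where $p_\pm$ are monomials in the remaining cluster variables of the seed and hence, by the inductive hypothesis, equal to $CC$ of direct sums of indecomposables. Auslander--Reiten theory for $\mathrm{rep}(Q)$, equivalently the exchange triangles in $\mathcal{C}_Q$, provides a pair of short exact sequences with one-dimensional $\mathrm{Ext}^1$ that realises precisely this relation between the relevant objects; applying the multiplication formula and the direct-sum identity then identifies $\tilde{x}_k$ with $CC$ of the indecomposable attached to it by Corollary \ref{cor}. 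Since the exchange graph of a Dynkin cluster algebra is connected and finite, and by Theorem \ref{Gab} the number of indecomposables equals the number of non-initial cluster variables, every indecomposable is reached in this way, and $CC(V) = X_V$ follows for all $V$. In the type $A_n$ case relevant to the present paper this whole argument becomes purely combinatorial: indecomposables correspond to the diagonals of the $(n+3)$-gon, exchange relations to flips, and the two non-split extensions to the two triangles cut off by a diagonal inside the quadrilateral of a flip, so that each $Gr_e$ is a product of projective spaces and the Euler-characteristic computation reduces to counting lattice points in a Young-diagram-like region, making the induction essentially automatic.
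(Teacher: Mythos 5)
The paper does not prove this statement: it is imported verbatim from \cite{CC} as a known result, so there is no internal proof to compare yours against. Judged on its own, your outline reproduces the standard route through the literature (Caldero--Chapoton for Dynkin quivers, as streamlined by Caldero--Keller): verify the formula on simples and on the shifted projectives $P_i[1]$ so that the initial cluster is covered, establish the multiplication formula $CC(M)\,CC(N)=CC(E)+CC(E')$ when $\dim\mathrm{Ext}^1(M,N)=1$, and propagate along the exchange graph using the exchange triangles of the cluster category together with Corollary \ref{cor} and the finiteness from Theorem \ref{Gab}. The global architecture is sound, and your remark that in type $A_n$ everything reduces to diagonals, flips and products of projective spaces is accurate and is essentially how \cite{CC} first treats that case.

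The gap is that the one step you yourself identify as the technical heart --- the multiplication formula --- is asserted rather than proved, and it is the entire content of the theorem. Your plan to ``compare the two sides stratum by stratum'' after stratifying $Gr_e(E)$ and $Gr_e(E')$ by the dimension vectors of $U\cap N$ and of the image of $U$ in $M$ does not close on its own: the strata of $Gr_e(E)\sqcup Gr_e(E')$ are \emph{not} in bijection with $\bigsqcup_{f+g=e} Gr_f(N)\times Gr_g(M)$. What one actually has to prove is that the natural constructible map $U\mapsto (U\cap N,\ \mathrm{im}(U\to M))$ has affine-space fibres over a suitable stratification, and that each pair $(U_N,U_M)$ lifts (up to such a fibration) to exactly one of $E$ or $E'$ --- the Caldero--Keller dichotomy, which rests on the $2$-Calabi--Yau duality of the cluster category and is where the real work lies. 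Two smaller points: the original argument in \cite{CC} actually inducts along the Auslander--Reiten quiver using almost split sequences (the identity $CC(M)\,CC(\tau M)=CC(B)+1$) rather than the full $\mathrm{Ext}^1$-multiplication formula, so if you intend to quote only \cite{CC} you should align the induction with that version; and to invoke Corollary \ref{cor} at the end you must show the denominator of $CC(V)$ is exactly $\prod_i x_i^{d_i}$, not merely a divisor of it --- you defer this to the induction, which is legitimate but should be said explicitly, since it is what ties $CC(V)$ to the specific variable $X_V$ rather than to some other Laurent polynomial with a smaller denominator.
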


We will call by root system $A_\infty$ a system each root of which is a finite linear combination of simple roots of system $A_n$ for a sufficiently big $n.$ Equivalently, each root is a root of system $A_n$ for a sufficiently big $n.$ 

We are ready to formulate and prove the main new result of this subsection:

\begin{proposition} \label{infan}
For the cluster algebra $A_\infty$ without coefficients (and the quiver $\vec{A}_{\infty}$)  statements of Theorems \ref{geshva}, \ref{FZ}, \ref{Gab} and \ref{C-C} and Corollary \ref{cor} holds. This result holds for cluster algebras $B_{\infty},$ $C_{\infty}$ and $D_{\infty}$ as well.
\end{proposition}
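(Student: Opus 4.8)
The plan is to reduce every assertion to its finite-type counterpart through a ``finite window'' argument. Every seed of the cluster algebra of type $A_\infty$ is obtained from the initial seed $(\vec A_\infty,x)$ by a \emph{finite} word of mutations $\mu_{k_1},\dots,\mu_{k_m}$, and a single mutation at a vertex $k$ (of whatever mutation of $\vec A_\infty$ we have reached) only reverses the arrows at $k$ and modifies the arrows between the two neighbours $k-1,k+1$ of $k$; hence no vertex of index exceeding $N+1$, where $N=\max_i k_i$, is ever touched. So the first step is to isolate and prove the following stabilization lemma. \textbf{Key Lemma.} \emph{Let $\sigma=\mu_{k_m}\circ\cdots\circ\mu_{k_1}$ with $N=\max_i k_i$, and for each $M$ with $N+1\le M\le\infty$ put $(R^{(M)},u^{(M)})=\sigma(\vec A_M,x)$. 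Then $u^{(M)}_j=x_j$ for all $j\ge N+1$; the full subquiver of $R^{(M)}$ on $\{1,\dots,N+1\}$ and the tuple $(u^{(M)}_1,\dots,u^{(M)}_N)$ are independent of $M$, with $u^{(M)}_j\in\mathbb Q(x_1,\dots,x_{N+1})$ for $j\le N$; and for $j\ge N+1$ the only arrow of $R^{(M)}$ incident to $j$ not lying inside $\{1,\dots,N+1\}$ is $j\to j+1$.} I would prove this by induction on $m$: a mutation at $k\le N$ only reads the variables and arrows attached to $k$ and to its neighbours, which have index $\le N+1$ and, by the induction hypothesis, are $M$-independent, and it leaves every variable and every arrow at a vertex $\ge N+1$ unchanged.

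Granting the Key Lemma, the four theorems pass to the limit essentially formally. For Theorem~\ref{geshva}: given two seeds of $A_\infty$ with equal clusters, choose $M=N+1$ large enough to cover the mutation words of both; their restrictions to $\{1,\dots,N+1\}$ are then seeds of the type-$A_{N+1}$ algebra with the same cluster, so part 1) for $A_{N+1}$ forces equality on the window, while outside the window both seeds are the inert tail $N+1\to N+2\to\cdots$ with variables $x_{N+2},x_{N+3},\dots$; part 2) is handled the same way, after noting that the unique variable in which two clusters differ sits inside a large enough window and that $\mu_k$ with $k\le N$ commutes with restriction to the window. For Theorem~\ref{FZ}(a): by the Key Lemma every cluster variable of $A_\infty$ is either some $x_j$ or a cluster variable of a finite $A_{N+1}$, hence a Laurent polynomial with positive-integer numerator coefficients. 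For (b) one replaces ``positive roots of a simply laced Dynkin diagram'' by positive roots of $A_\infty$ (each of which is a positive root of some $A_n$): the Key Lemma shows the finite-type correspondence between a positive root $\sum d_i\alpha_i$ and the cluster variable with denominator $\prod x_i^{d_i}$ does not depend on the window once the window contains the support of the root, so it glues to a bijection between the non-initial cluster variables of $A_\infty$ and the positive roots of $A_\infty$; in particular $\vec A_\infty$ has infinitely many cluster variables and, having infinitely many vertices, is not mutation equivalent to a finite Dynkin diagram, so the stated equivalence holds vacuously. Theorem~\ref{Gab} and Corollary~\ref{cor} reduce in the same way: a finite-dimensional indecomposable representation $V$ of $\vec A_\infty$ has $\bigoplus_i V_i$ finite-dimensional, hence bounded and, by indecomposability, connected support $[a,b]$, so it is a representation of $\vec A_m$ for $m\ge b$, where Gabriel and Corollary~\ref{cor} for $A_m$ apply; stability makes the resulting bijections --- with the positive roots of $A_\infty$, respectively with the non-initial cluster variables of $A_\infty$ --- independent of $m$.

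The delicate case is Theorem~\ref{C-C}, where one must also pin down the correct infinite-dimensional avatar of the Caldero--Chapoton map and match windows carefully. For an indecomposable $V$ with support $[a,b]$, I would define $CC(V)$ by the evident infinite analogue of~(\ref{CCV}), the product $\prod_{i=1}^{n}$ being replaced by a product over all vertices that is effectively finite: the sum over $0\le e\le d$ is finite, and a one-line inspection of exponents shows that only the variables $x_i$ with $i\le b+1$ occur, so $CC(V)\in\mathbb Q(x_1,\dots,x_{b+1})$. A direct comparison of exponents then shows that $CC(V)$ computed from $\vec A_\infty$ coincides with $CC(V)$ computed from $\vec A_m$ for \emph{every} $m\ge b+1$; the essential point is that one must take $m\ge b+1$, not merely $m\ge b$, because the arrow $b\to b+1$ of $\vec A_\infty$ contributes a factor $x_{b+1}$ in several terms. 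Combining Theorem~\ref{C-C} for $A_m$ with the Key Lemma --- which identifies the finite-type cluster variable $X_V$ of $A_m$ (the unique non-initial one with denominator $x_a\cdots x_b$) with the corresponding cluster variable of $A_\infty$, independently of $m\ge b+1$ --- yields $CC(V)=X_V$ for $A_\infty$. I expect the real obstacle of the whole proposition to be exactly this window bookkeeping: choosing ``$M$ large enough'' uniformly across the various statements, and checking that enlarging it changes nothing, rather than any new cluster-algebraic input. Finally, the same window argument applies verbatim to $B_\infty$, $C_\infty$ and $D_\infty$, with the finite-type versions of the quoted theorems in place of their $A_n$ versions: for the simply laced $D_n$ this is literal, while for $B_n$ and $C_n$ one uses the Fomin--Zelevinsky finite-type theory for the corresponding skew-symmetrizable exchange matrices (almost positive roots replacing positive roots of a simply laced diagram) together with the Caldero--Chapoton-type formula for the associated valued quivers.
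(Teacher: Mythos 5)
Your proposal is correct and follows essentially the same route as the paper: both reduce each statement to its finite-type $A_n$ counterpart by observing that a finite mutation word only affects a bounded window of $\vec{A}_{\infty}$, and that the associated representations, positive roots and Caldero--Chapoton products stabilize once the window strictly contains the relevant support, with zero spaces added outside it. Your explicit Key Lemma and the observation that one must take $m\ge b+1$ (so that the arrow $b\to b+1$ and its factor $x_{b+1}^{e_b}$ in formula (\ref{CCV}) are present) merely spell out what the paper compresses into the phrase ``for sufficiently big $n$.''
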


\begin{proof}
Let us prove statements of Theorem \ref{geshva}. Consider arbitrary cluster $u$ lying in a seed $(R,u).$ Let $n \in \mathbb{N}$ be a maximal number for which $x_{n-1} \notin u.$ If we consider now restriction $R^{'}$ of a quiver $R$ on the set of vertices $\left\{1,\ldots,n\right\}$ and put $u^{'} = u \backslash \left\{x_{n+1},x_{n+2},\ldots\right\},$ then $(R^{'},u^{'})$ will be a seed of an algebra of type $A_n$ (obtained from the initial one by the same sequence of mutations that maps the initial seed of $A_{\infty}$) to $(R,u).$ Since the first statement of the theorem holds for this algebra, a seed $(R^{'},u^{'})$ is determined uniquely, therefore $(R,u)$ is determined uniquely too (since we know all arrows of $R$ for those at least one vertice is bigger than $n,$ - all of them are $k \rightarrow (k+1)$ for $k \geq n).$

Now prove the second statement. We know that the exchange graph of an algebra of type $A_{\infty}$ is the $1-$skeleton of $As^{\infty}.$ Hence each two of its vertices belong both to 
$1-$skeleton of $As^n$ for sufficiently big $n.$ This means that they are connected by an edge iff corresponding two clusters $v$ and $w$ of an algebra of type $A_n$ differ in exactly one cluster variable. Since corresponding to our vertices clusters of $A_{\infty}-$type algebra have forms $v \cup \left\{x_{n+1},x_{n+2},\ldots\right\}$ and $w \cup \left\{x_{n+1},x_{n+2},\ldots\right\}$ respectively, the statement is proved.

All other statements follow from the fact that each (non-initial) cluster variable appears at the some finite step, i.e. it is a cluster variable of $A_n$-type algebra for sufficiently big $n.$ Thus it corresponds by bijections from Theorems \ref{FZ} and \ref{Gab} to unique (up to isomorphism) indecomposable representation of the quiver from the initial seed of $A_n$-type algebra (and to unique positive root of the system $A_n$ being simultaneously a root of the system $A_\infty$). Adding to this representation zero vector spaces $V_k$ and maps $V_{k-1} \rightarrow V_k, k > n,$ we obtain an indecomposable representation of the quiver from the initial seed of $A_{\infty}$-type algebra. There are no other representations and roots corresponding to our variable, since one can use similar arguments backwards. Hence the statements of Theorems \ref{FZ} and \ref{Gab} and Corollary \ref{cor} are proved for our case. Since addition of zero spaces and maps changes neither Euler characteristics, no products in the right hand side of the formula (\ref{CCV}), the statement of the Theorem \ref{C-C} holds in our case too, q.e.d.
\end{proof}

\subsection{Cluster algebra and cluster category of type $A_{\infty}$}

Our construction of the cluster algebra of type $A_{\infty}$ is different than a construction of a cluster
category $\mathscr{D}$ with the Auslander-Reiten quiver $\mathbb{Z} A_{\infty}$ given by Holm and J{\o}rgensen and investigated in \cite{HJ} and \cite{JP}, but there is quite transparent connection between them. To explain this we will use notation from Sections 2 and 6 of \cite{JP}. There exists a bijection, which we will denote by $\phi,$ from the set of indecomposable objects of $\mathscr{D}$ to the set of arcs $(m,n)$ connecting non-neighboring integers. We consider a cluster tilting subcategory $\mathscr{T}$ and write $T = \mbox{ind} \mathscr{T}.$ Then $T$ bijectively corresponds by $\phi$ to a collection $\mathfrak{T}$ of arcs. Theorems 4.3. and 4.4 of \cite{HJ} shows that the fact that $\mathscr{T}$ is cluster tilting is equivalent to the fact that $\mathfrak{T}$ is a maximal collection of non-crossing arcs which is {\it locally finite} or has a {\it fountain}. 
Having a fountain means that there is an integer $n$ such that $\mathfrak{T}$ contains infinitely many arcs of the form $(m,n)$ and infinitely many of the form $(n,p).$ Being locally finite means that for all $n$ $\mathfrak{T}$ contains only finitely many arcs of the form $(m,n)$ and only finitely many of the form $(n,p).$

Consider $\mathscr{E} = \mbox{add} E$ where $E$ is the set of indecomposable objects of $\mathscr{D}$ which can be reached by finitely many mutations from $\mathscr{T}.$ In the equivalent language of arcs it means that $\phi E$ is the set of arcs which can be obtained as elements of images of $\mathfrak{T}$ by finitely many flips. Here by {\it flip} we mean the similar operation than above: one removes one of arcs in a maximal non-crossing collection and replaces it by the unique another to obtain a maximal collection of non-crossing arcs again. 

\begin{theorem} {\normalfont (\cite{JP})} \label{obj}
\begin{itemize}
\item[(i)] If $\mathfrak{T}$ is locally finite, then $E = \mbox{ind} \mathscr{D}.$

\item[(ii)] If $\mathfrak{T}$ has a fountain at $n$, then $E$ is the set of all objects corresponding to arcs of form $(k,l)$ where $k < l \leq n$ or $n \leq k < l.$ 
\end{itemize}
\end{theorem}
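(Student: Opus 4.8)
The plan is to carry everything over to the combinatorics of arcs and flips via $\phi$, so that the task becomes: starting from the maximal non-crossing collection $\mathfrak{T}$, decide which arcs occur in some collection reachable by finitely many flips. Two soft facts drive the argument: a flip sequence of length $N$ deletes at most $N$ of the arcs of $\mathfrak{T}$ (each flip removes one arc and adds one), and the $1$-skeleton of the finite associahedron $As^{m}$ is connected, i.e.\ the flip graph of a \emph{finite} polygon is connected (this underlies the discussion of $As^{m}$ in Section~2). Throughout, $\phi E$ is the set of arcs occurring in some collection obtained from $\mathfrak{T}$ by finitely many flips.

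First I would record the structure of a maximal non-crossing collection $\mathfrak{S}$ with a fountain at $n$. The left-fountain arcs $(x_1,n),(x_2,n),\dots$ with $x_1>x_2>\cdots\to-\infty$ are pairwise non-crossing, hence nested, and cut the left half into \emph{finite} polygons (the one between $(x_{i+1},n)$ and $(x_i,n)$ has the vertices $x_{i+1},x_{i+1}+1,\dots,x_i,n$); symmetrically on the right. Two consequences: $\mathfrak{S}$ is locally finite at every integer $m\neq n$, and $\mathfrak{S}$ contains no arc $(k,l)$ with $k<n<l$, since such an arc crosses $(n,p)$ for every $p>l$ in the right fountain. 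I would also note the elementary counting fact that, in a collection locally finite at each integer lying strictly between $k$ and $l$, only finitely many arcs cross $(k,l)$: a crosser has an endpoint strictly between $k$ and $l$, and each integer is an endpoint of only finitely many arcs. Hence only finitely many arcs of $\mathfrak{T}$ cross a given arc $\beta$ whenever $\mathfrak{T}$ is locally finite, or $\mathfrak{T}$ has a fountain at $n$ and $\beta$ does not cross $n$.

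The core is a reachability lemma: if $\mathfrak{T}$ is cluster tilting (so locally finite, or a fountain at $n$), if $\beta=(a,b)$ is crossed by only finitely many arcs of $\mathfrak{T}$, and if — in the fountain case — $\beta$ does not cross $n$, then some finite sequence of flips carries $\mathfrak{T}$ to a maximal non-crossing collection containing $\beta$, all of whose intermediate terms are again cluster tilting. To prove it, let $P$ be the finite sub-polygon swept out by $\beta$, i.e.\ the union of the finitely many triangles of $\mathfrak{T}$ that $\beta$ passes through; then $a,b$ are vertices of $P$, $\beta$ is a diagonal of $P$, the boundary of $P$ is built from arcs of $\mathfrak{T}$ and boundary edges, so nothing in $\mathfrak{T}$ enters the interior of $P$, and every flip performed on an internal diagonal of $P$ really is a flip of the ambient collection, its replacement being the other diagonal of the relevant quadrilateral, which again lies in $P$. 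Running through the connected flip graph of $P$ I reach the triangulation of $P$ containing $\beta$; since all arcs outside $P$ are untouched — in particular all but finitely many left- and right-fountain arcs at $n$, resp.\ the locally finite exterior — every intermediate collection remains locally finite (resp.\ still has infinitely many arcs $(m,n)$ and $(n,p)$, hence a fountain at $n$), hence cluster tilting, so each step is a genuine mutation and $\beta\in\phi E$.

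Assembling the cases: for (i), if $\mathfrak{T}$ is locally finite then every arc is crossed by only finitely many arcs of $\mathfrak{T}$, so the lemma puts every arc in $\phi E$, i.e.\ $E=\operatorname{ind}\mathscr{D}$. For (ii), the inclusion ``$\subseteq$'' holds because a collection obtained from $\mathfrak{T}$ by finitely many flips still contains infinitely many arcs $(m,n)$ and infinitely many $(n,p)$, hence still has a fountain at $n$, hence — by the structure result above — contains no arc $(k,l)$ with $k<n<l$, leaving only arcs with $k<l\le n$ or $n\le k<l$; and ``$\supseteq$'' holds since any such arc does not cross $n$, so the lemma applies. Pulling back through $\phi$ gives the stated descriptions of $E$ in both parts. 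The main obstacle is the reachability lemma — precisely the verification that flips performed inside $P$ agree with flips of the infinite collection and leave its exterior, hence its ``type'' and thus cluster-tiltingness, intact (a flip's replacement arc is in principle determined by the whole collection, not by a local quadrilateral, and it is the choice of $P$ as a union of entire triangles of $\mathfrak{T}$ that forces the two to coincide); everything else is bookkeeping with crossings together with the connectivity of the associahedron's $1$-skeleton.
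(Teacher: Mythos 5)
The paper does not prove this statement at all: Theorem \ref{obj} is quoted verbatim from \cite{JP} and used as a black box, so there is no proof of the author's to compare yours against. Judged on its own terms, your argument is sound and is essentially the combinatorial proof one finds in \cite{HJ} and \cite{JP}: reduce to arcs via $\phi$, observe that a fountain collection is locally finite away from $n$ and contains no arc $(k,l)$ with $k<n<l$ (which gives the inclusion $\subseteq$ in (ii), since finitely many flips cannot destroy the fountain), and prove reachability of any arc $\beta$ with finitely many crossers by flipping inside the finite polygon $P$ swept out by $\beta$, using connectivity of the flip graph of a finite polygon. The one step you pass over quickly is the assertion that $P$ exists, i.e.\ that $\mathfrak{T}$ genuinely triangulates a neighbourhood of $\beta$ so that every arc crossing $\beta$ is a common side of two triangles of $\mathfrak{T}$; this is where the hypothesis that $\mathfrak{T}$ is locally finite or has a fountain (and not merely maximal non-crossing) is really used, since general maximal collections of arcs in the $\infty$-gon need not bound triangles everywhere. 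That fact is part of the structure theory of \cite{HJ} (it is also what guarantees that each combinatorial flip is a genuine mutation of cluster tilting subcategories), and it deserves an explicit sentence; with it in place, your proof is complete and correct.
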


We will use the following definition from \cite{BIRS}:

\begin{definition}
A map
$$\rho : \mbox{obj} \mathscr{E} \rightarrow \mathbb{Q}(x_z)_{z\in \mathfrak{T}}$$
is called a cluster map if it satisfies the following conditions:
\begin{itemize}
\item[(i)] $\rho$ is constant on isomorphism classes.
\item[(ii)] If $c_1, c_2 \in \mathscr{E}$ then $\rho(c_1 \oplus c_2) = \rho(c_1)\rho(c_2).$
\item[(iii)] If $m, l  \in \mbox{obj} \mathscr{E}$ are indecomposable objects with $\mbox{dim} \mbox{Ext}^1_{\mathscr{D}} (m, l) = 1$ and
$b, b' \in \mbox{obj} \mathscr{E}$ are such that there are non-split distinguished triangles
$$m \rightarrow b \rightarrow l; \quad l \rightarrow b' \rightarrow m$$
in $\mathscr{D},$ then $\rho(m)\rho(l) = \rho(b) + \rho(b').$
\item[(iv)] There is a cluster tilting subcategory $\mathscr{T'}$ which can be reached from $\mathscr{T}$ for
which $\left\{ \rho(t') | t' \in \mbox{ind} \mathscr{T'} \right\}$ is a transcendence basis of the field $\mathbb{Q}(x_z)_{z \in \mathfrak{T}}.$
\end{itemize}
\end{definition}

Consider an arbitrary $\mathfrak{T}$ having a fountain at $n$ and a field $\mathbb{Q}(x_t, y_{t'}),$ where $t \in \left\{(i, j) \in  \mathfrak{T}, i < j \leq n \right\},$  $t' = \left\{(i', j') \in \mathfrak{T}, n \leq i' < j' \right\}.$

\begin{theorem} {\normalfont(\cite{JP})} \label{fount} 
There exists cluster map $\rho^{\mathscr{T}} : \mbox{obj} \mathscr{E} \to \mathbb{Q}(x_t, y_t')$ with following properties:

\begin{itemize}
\item[(i)] $\rho^{\mathscr{T}} (\phi^{-1}(t)) = x_t, \rho^{\mathscr{T}}(\phi^{-1}(t')) = y_{t'}$ for all $t, t'.$
\item[(ii)] If $\phi^{-1}((k,l)) \in \mathscr{E},$ then $\rho^{\mathscr{T}}(\phi^{-1}((k,l)))$ is a  non-zero Laurent polynomial (in $x_t$ and $y_{t'}$). The coefficients in the numerator of this polynomial are positive integers.
\item[(iii)] If $k < l \leq n,$ then in fact $\rho^{\mathscr{T}}(\phi^{-1}((k,l))) \in \mathbb{Q}(x_t).$ Similarly, if $n \leq k < l,$ then $\rho^{\mathscr{T}}(\phi^{-1}((k,l))) \in \mathbb{Q}(y_{t'}).$
\end{itemize}
\end{theorem}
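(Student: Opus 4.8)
The plan is to exploit the rigid decomposition of $\mathscr{E}$ forced by the fountain at $n$, reducing the statement to two independent copies of the situation already handled in Proposition \ref{infan}. First I would record the combinatorial \emph{decoupling}. By Theorem \ref{obj}(ii), the indecomposable objects of $\mathscr{E}$ are exactly the $\phi^{-1}((k,l))$ with $k<l\le n$ (call these \emph{left} arcs) or $n\le k<l$ (\emph{right} arcs), and the two families are disjoint. Since $\mathfrak{T}$ has a fountain at $n$, an arc $(k,l)$ with $k<n<l$ would cross one of the infinitely many arcs $(m,n)\in\mathfrak{T}$ with $m<k$; hence $\mathfrak{T}=\mathfrak{T}_L\sqcup\mathfrak{T}_R$, where $\mathfrak{T}_L$ consists of the left arcs and $\mathfrak{T}_R$ of the right arcs, and each of $\mathfrak{T}_L,\mathfrak{T}_R$ is a maximal non-crossing collection on the half-line $\{\dots,n-1,n\}$, resp. $\{n,n+1,\dots\}$, still containing infinitely many arcs ending at $n$. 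A flip alters only one arc, so it preserves the property of containing infinitely many arcs at $n$, and the same crossing argument then shows that no maximal non-crossing collection $\mathfrak{U}$ reachable from $\mathfrak{T}$ by finitely many flips contains an arc straddling $n$; thus $\mathfrak{U}=\mathfrak{U}_L\sqcup\mathfrak{U}_R$ and a single flip of $\mathfrak{U}$ takes place entirely inside $\mathfrak{U}_L$ or entirely inside $\mathfrak{U}_R$. In particular each left arc in $\mathscr{E}$ is reached from $\mathfrak{T}_L$ by finitely many flips supported in a bounded interval, each right arc from $\mathfrak{T}_R$ similarly, and, using the description of $\mathrm{Ext}^1$ in the Holm--J{\o}rgensen category (non-zero precisely for crossing arcs), $\dim\mathrm{Ext}^1_{\mathscr{D}}(m,l)=0$ whenever $m$ is a left and $l$ a right indecomposable.

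Second, I would treat each half by the truncation argument of Proposition \ref{infan}. A collection $\mathfrak{T}_L$ on a half-line with infinitely many arcs at $n$ has no infinite complementary region, so its dual quiver is locally finite with all finite sub-quivers of type $A$; consequently, for a left arc $(k,l)$ reached from $\mathfrak{T}_L$ by finitely many flips, there is a finite sub-polygon $\{m,m+1,\dots,n\}$ (cut off by an arc $(m,n)\in\mathfrak{T}_L$ lying below the support of those flips) inside which the whole computation takes place, and there it is an ordinary cluster algebra of finite type $A$ with initial triangulation the restriction of $\mathfrak{T}_L$. Invoking Theorems \ref{FZ}(a) and \ref{C-C} on that finite piece and passing to the limit exactly as in the proof of Proposition \ref{infan}, one obtains for each left arc a well-defined non-zero Laurent polynomial in the $x_t$ with positive integer numerator coefficients, and for each right arc the analogous statement in the $y_{t'}$.

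Third, I would assemble the map. Every object $c\in\mathscr{E}$ decomposes uniquely as $c=c_L\oplus c_R$ with $c_L$ a sum of left indecomposables and $c_R$ a sum of right indecomposables. Define $\rho^{\mathscr{T}}$ on left, resp. right, indecomposables to be the Laurent polynomials produced above, and set $\rho^{\mathscr{T}}(c)=\rho^{\mathscr{T}}(c_L)\,\rho^{\mathscr{T}}(c_R)$, extended multiplicatively over all direct summands. Property (i) of the theorem and constancy on isomorphism classes are immediate; (iii) holds because a left arc is reached by flips supported on the left half, so its value never involves any $y_{t'}$ (and symmetrically); (ii) holds because each value is a product of non-zero positive Laurent polynomials in two disjoint sets of variables. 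For the cluster-map axioms: multiplicativity on $\oplus$ is built in; the transcendence-basis condition holds with $\mathscr{T}'=\mathscr{T}$, since $\{\rho^{\mathscr{T}}(t):t\in\mathfrak{T}\}=\{x_t\}\cup\{y_{t'}\}$ is a free generating set of $\mathbb{Q}(x_t,y_{t'})$; and the exchange axiom applies only to an $\mathrm{Ext}^1$-dimension-one pair $m,l$, which by the Ext-vanishing above lies on a single side, so the exchange triangles $m\to b\to l$, $l\to b'\to m$ realize the two ways of re-completing one quadrilateral within that half and the identity $\rho^{\mathscr{T}}(m)\rho^{\mathscr{T}}(l)=\rho^{\mathscr{T}}(b)+\rho^{\mathscr{T}}(b')$ is exactly the exchange relation (\ref{er}) in the finite-type cluster algebra of that half.

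The main obstacle, to my mind, is carrying out the decoupling rigorously on the categorical side: one must match the triangulated data of $\mathscr{D}$ (cluster-tilting subcategories, exchange triangles, $\mathrm{Ext}^1$) with the arc combinatorics precisely enough to know that $\mathrm{Ext}^1$ between the two halves vanishes and that the exchange triangles inside a half realize ordinary flips; once this is in place, the analytic content is entirely inherited from Proposition \ref{infan}. A secondary technical point is justifying that the flips reaching a prescribed arc are supported on a bounded interval, which rests on the fact that a collection with a fountain has no infinite complementary region.
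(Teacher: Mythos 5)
The paper does not prove Theorem \ref{fount} at all: it is imported verbatim from \cite{JP}, and the text only remarks that the map is constructed in Section 2 of that paper as a categorical Caldero--Chapoton map, i.e.\ an analogue of formula (\ref{CCV}) defined directly on objects of $\mathscr{D}$ via Grassmannians of submodules, with Laurentness and positivity proved there. So there is no internal proof to compare against, and your route --- decoupling $\mathscr{E}$ at the fountain into left and right halves and assembling $\rho^{\mathscr{T}}$ from finite type-$A$ truncations in the spirit of Proposition \ref{infan} --- is genuinely different from the source. The decoupling itself is fine: Theorem \ref{obj}(ii) already hands you the dichotomy on indecomposables, the crossing argument correctly shows that no reachable collection contains an arc straddling $n$, and the truncation by an arc $(m,n)\in\mathfrak{T}$ with $m$ sufficiently small does reduce each individual value to a cluster variable of a finite type-$A$ algebra, giving (i), (ii) and (iii) of the statement.

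The genuine gap is exactly where you locate it, but it is not a secondary technicality --- it is the content of the theorem. The assertion being proved is that $\rho^{\mathscr{T}}$ is a \emph{cluster map}, and axiom (iii) of that definition quantifies over \emph{all} pairs of indecomposables $m,l\in\mathscr{E}$ with $\dim\mathrm{Ext}^1_{\mathscr{D}}(m,l)=1$, i.e.\ over all pairs of crossing arcs on one side, not only over exchange pairs of two adjacent clusters. For a general crossing pair the required identity $\rho(m)\rho(l)=\rho(b)+\rho(b')$ is a Ptolemy-type relation among non-adjacent cluster variables (in type $A$, a three-term Pl\"ucker relation), not an instance of the exchange relation (\ref{er}); and the middle terms $b$, $b'$ are dictated by the distinguished triangles of $\mathscr{D}$, so one must compute those triangles (via \cite{HJ}) and match them with the four sides of the quadrilateral spanned by the crossing arcs before the combinatorial identity can even be invoked. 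A map defined purely from arc combinatorics, as yours is, cannot certify axiom (iii) without this categorical input; deferring it leaves unproved precisely the property that distinguishes a cluster map from an arbitrary assignment of Laurent polynomials. A smaller loose end, also worth closing, is the independence of your Laurent polynomials from the choice of truncating arc $(m,n)$, which needs the compatibility of cluster variables under freezing the cutting diagonal rather than a direct appeal to the argument of Proposition \ref{infan} (there the truncation was along the fixed initial quiver $\vec{A}_{\infty}$, whereas here $\mathfrak{T}$ is arbitrary).
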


This map is constructed in Section 2 of \cite{JP}, it is called a Caldero-Chapoton map. One can see that it is an analogue of CC(V) (defined in more categorial way). All these facts mean that the subalgebra $\mathcal{A}_{\mathscr{T}}$ in $\mathbb{Q}(x_t, y_{t'})$ generated by $\rho^{\mathscr{T}}(\mbox{obj} \mathscr{E})$ is actually a disjoint union of two subalgebras in $\mathbb{Q}(x_t)$ and $\mathbb{Q}(y_{t'})$ respectively. 
Consider $\mathfrak{T}_0 = \left\{(n-i-1,n);(n,n+i+1)|i \in \mathbb{N} \right\}, T_0 = \phi^{-1}(\mathfrak{T}_0), \mathscr{T}_0 = \mbox{add}(T_0);$ define $E_0$ like $E$ with corresponding substitution of $T$ by $T_0;$ $\mathscr{E}_0 = \mbox{add}(E_0)$. 
A comparison of $\mathfrak{T}_0$ with an initial quiver of an algebra of type $A_{\infty}$ shows, together with an analogy in the description of flips, that for $\mathfrak{T}_0$ each of these two subalgebras is a cluster algebra of type $A_{\infty}$ (with the same cluster structure induced from the collections of arcs). 

\begin{proposition} \label{foarb}
Consider arbitrary $\mathfrak{T}$ having a fountain at $n.$ There exists such isomorphism
$$ \psi_{T_0, T}: \quad \mathcal{A}_{\mathscr{T_0}} \rightarrow \mathcal{A}_{\mathscr{T}} $$
that 
$$ \psi_{T_0, T} (\rho^{\mathscr{T}_0} (E_0)) = \rho^{\mathscr{T}} (E).$$
\end{proposition}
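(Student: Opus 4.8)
The plan is to reduce everything to finite type via a direct‑limit argument, exploiting that a fountain forces the arc collection to split at $n$ and that Caldero--Chapoton values are supported on finitely many coordinates. First I would record two reductions. Since $\mathfrak{T}_0$ and $\mathfrak{T}$ both have a fountain at $n$, Theorem~\ref{obj}(ii) gives $E_0=E$, namely the indecomposables corresponding to arcs $(k,l)$ with $k<l\le n$ or $n\le k<l$; thus $\rho^{\mathscr T_0}$ and $\rho^{\mathscr T}$ are defined on the same set $\mathrm{obj}\,\mathscr E=\mathrm{obj}\,\mathscr E_0$, and since $\mathscr E=\mathrm{add}(E)$ and cluster maps are multiplicative on direct sums (property (ii)), once I have an algebra map $\psi$ with $\psi(\rho^{\mathscr T_0}(c))=\rho^{\mathscr T}(c)$ for every indecomposable $c\in E$, the identity $\psi_{T_0,T}(\rho^{\mathscr T_0}(E_0))=\rho^{\mathscr T}(E)$ is automatic. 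Second, a maximal non‑crossing collection with a fountain at $n$ contains no arc crossing $n$: an arc $(k,l)$ with $k<n<l$ would cross $(n,p)$ for every $p>l$, of which the fountain supplies infinitely many. Hence $\mathfrak{T}_0=\mathfrak{T}_0^L\sqcup\mathfrak{T}_0^R$ and $\mathfrak{T}=\mathfrak{T}^L\sqcup\mathfrak{T}^R$ along the rays $\{\dots,n-1,n\}$ and $\{n,n+1,\dots\}$, and by Theorem~\ref{fount}(iii) the algebras $\mathcal A_{\mathscr T_0}$, $\mathcal A_{\mathscr T}$ are products (tensor products over $\mathbb Q$, the two variable sets being disjoint) of a left and a right subalgebra. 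So it suffices to construct $\psi^L\colon\mathcal A^L_{\mathscr T_0}\to\mathcal A^L_{\mathscr T}$ sending $\rho^{\mathscr T_0}(\phi^{-1}(a))$ to $\rho^{\mathscr T}(\phi^{-1}(a))$ for left arcs $a$, its mirror $\psi^R$, and put $\psi_{T_0,T}=\psi^L\otimes\psi^R$.

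Next I would exhaust the left ray by finite polygons. Write $P^0_a=\rho^{\mathscr T_0}(\phi^{-1}(a))$ and $P_a=\rho^{\mathscr T}(\phi^{-1}(a))$. Because $\mathfrak{T}^L$ has a fountain at $n$ there are infinitely many arcs $(m,n)\in\mathfrak{T}^L$ with $m\to-\infty$, and every such $(m,n)$ also lies in $\mathfrak{T}_0^L$ (which contains all arcs to $n$). For such an $m$ the region $\Pi_m$ cut off by $(m,n)$ is a polygon on the vertices $m,m+1,\dots,n$ whose restrictions $\mathfrak{T}^L|_{\Pi_m}$ and $\mathfrak{T}_0^L|_{\Pi_m}$ are triangulations of $\Pi_m$, the latter the fan at $n$, with the side $(m,n)$ playing the role of a single coefficient. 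By Theorem~\ref{fount}(i), $\rho^{\mathscr T_0}$ (resp. $\rho^{\mathscr T}$) takes the value $x_z$ (resp. $y_z$) on the arcs of the initial triangulation, and being a cluster map it satisfies the Ptolemy exchange relations (property (iii) of cluster maps, specialised to arcs); since every diagonal of $\Pi_m$ is obtained from the initial triangulation by finitely many flips, these data determine all values uniquely, so $\rho^{\mathscr T_0}|_{\Pi_m}$ (resp. $\rho^{\mathscr T}|_{\Pi_m}$) is exactly the cluster‑variable assignment of the finite‑type cluster algebra $\mathcal A^0_{\Pi_m}$ (resp. $\mathcal A_{\Pi_m}$) of type $A_{|\Pi_m|-3}$ with initial seed $\mathfrak{T}_0^L|_{\Pi_m}$ (resp. $\mathfrak{T}^L|_{\Pi_m}$) and coefficient $x_{(m,n)}$ (resp. $y_{(m,n)}$). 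Since each left arc lies in some $\Pi_m$ and these are nested as $m$ decreases, I obtain directed unions $\mathcal A^L_{\mathscr T_0}=\bigcup_m\mathcal A^0_{\Pi_m}$ and $\mathcal A^L_{\mathscr T}=\bigcup_m\mathcal A_{\Pi_m}$.

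Now I would build the isomorphism. For each $m$, $\mathcal A^0_{\Pi_m}$ and $\mathcal A_{\Pi_m}$ are the cluster algebras associated with two triangulations of the \emph{same} polygon $\Pi_m$ and the \emph{same} coefficient configuration (one free coefficient on the side $(m,n)$); by the classical description of cluster algebras of type $A$ from polygon triangulations (\cite{FZ2}) there is a canonical isomorphism $\theta_m\colon\mathcal A^0_{\Pi_m}\xrightarrow{\ \sim\ }\mathcal A_{\Pi_m}$ carrying $x_{(m,n)}\mapsto y_{(m,n)}$ and the cluster variable attached to a diagonal $d$ in one seed to that attached to $d$ in the other, i.e.\ $\theta_m(P^0_d)=P_d$. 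For $m'<m$ the restriction of $\theta_{m'}$ to $\mathcal A^0_{\Pi_m}$ is $\theta_m$, since both are ``the identity on diagonal labels''; hence the $\theta_m$ glue to $\psi^L=\varinjlim_m\theta_m\colon\mathcal A^L_{\mathscr T_0}\to\mathcal A^L_{\mathscr T}$, an isomorphism with $\psi^L(P^0_a)=P_a$. The mirror construction on the right ray (using arcs $(n,m')\in\mathfrak{T}^R$, $m'\to+\infty$, and polygons on $\{n,\dots,m'\}$) yields $\psi^R$, and $\psi_{T_0,T}=\psi^L\otimes\psi^R$ is the desired isomorphism; the compatibility with the Caldero--Chapoton maps is now immediate from its construction together with the first paragraph.

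The main obstacle is exactly the point where finite type cannot be bypassed. Unlike in finite type, a general $\mathfrak{T}$ with a fountain at $n$ typically differs from the fan $\mathfrak{T}_0$ in infinitely many arcs, so $\mathscr T$ is \emph{not} reachable from $\mathscr T_0$ by any finite sequence of mutations; one cannot transport a single chain of exchange relations from one seed to the other, and it is not even a priori clear that $\mathcal A^L_{\mathscr T}$ is isomorphic to a cluster algebra of the fan type. What rescues the argument is the observation that every polynomial relation, and every cluster variable $P_a$, already lives in the finite polygon $\Pi_m$ cut off by a suitable fountain arc of $\mathfrak{T}^L$, so seed‑independence is only ever invoked in finite type, where it is standard. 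The remaining care points are routine: that $\rho^{\mathscr T}|_{\Pi_m}$ genuinely is the Caldero--Chapoton map of the sub‑cluster‑category attached to $\Pi_m$ (uniqueness of cluster maps, used above), and that the $\theta_m$ are compatible under the nesting of polygons (they are, being identity on diagonal labels).
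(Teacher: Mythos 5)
Your argument is correct, but it takes a genuinely different route from the paper. The paper's proof is a one-step transport of structure: it defines $\psi_{T_0,T}$ on generators by $\rho^{\mathscr{T}_0}(\phi^{-1}(k,l)) \mapsto \rho^{\mathscr{T}}(\phi^{-1}(k,l))$ (using $E_0=E$ from Theorem \ref{obj}, as you do), extends it multiplicatively via axioms (i) and (ii) of a cluster map, and justifies well-definedness by observing that the relations imposed by axiom (iii) come from distinguished triangles in the ambient category $\mathscr{D}$ and are therefore literally the same for $\mathscr{E}_0$ and $\mathscr{E}$. You instead split each algebra at the fountain into a left and a right tensor factor, exhaust each ray by the finite polygons $\Pi_m$ cut off by fountain arcs of $\mathfrak{T}$ (all of which also lie in $\mathfrak{T}_0$), identify the restriction of each Caldero--Chapoton map to $\Pi_m$ with the type-$A$ cluster algebra of that polygon with one boundary coefficient, and glue the standard finite-type seed-independence isomorphisms $\theta_m$ over the directed system. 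Your route is longer and leans on several standard finite-type facts (uniqueness of cluster maps on a polygon, independence of the initial triangulation), but it buys something real: it makes the well-definedness of $\psi_{T_0,T}$ completely explicit, reducing it to a finite-type statement, whereas the paper's proof tacitly assumes that every algebraic relation among the $\rho^{\mathscr{T}_0}$-values of indecomposables is a formal consequence of the axiom-(iii) relations. Your remark that $\mathscr{T}$ is in general not reachable from $\mathscr{T}_0$ by finitely many mutations --- so that no single chain of exchange relations can be transported --- is exactly the phenomenon emphasized in the Remark following the proposition, and your polygon exhaustion is the honest way around it.
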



\begin{proof}
By Theorem \ref{obj} $E_0 = E = \phi^{-1}(\left\{(k,l)| (k - n)(l - n) \geq 0 \right\}).$ Define $\psi_{T_0, T}$ at $E_0$ as follows:
$$ \psi_{T_0, T} (\rho^{\mathscr{T}_0} (\phi^{-1}(k,l))) = \rho^{\mathscr{T}} (\phi^{-1}(k,l)).$$
Then by (i) and (ii) in definition of a cluster map extend it to $\rho^{\mathscr{T}}(\mbox{obj} \mathscr{E})$ and to entire $\mathcal{A}_{\mathscr{T_0}}.$ Since (iii) in definition of a cluster map appeals to triangles in $\mathscr{D},$ relations coming from this property are same for $\mathscr{E}_0$ and $\mathscr{E},$ that confirms the correctness of $\psi_{T_0, T}.$ It is clear that $\psi_{T_0, T}$ is an isomorphism.
\end{proof}

Now we will formulate similar results about locally finite collections.

\begin{lemma}
For each natural $n$ quivers $A_n^{alt}$

\begin{center}
\begin{tikzpicture}

  \draw[->,>=stealth] (0,0) node [below] {1} -- (0.95,0);
  \draw[->,>=stealth] (2,0) node [below] {3} -- (1.05,0);
  \draw[->,>=stealth] (2,0) node [below] {3} -- (2.95,0);
  \draw[->,>=stealth] (4,0)  -- (3.05,0);
  \draw (1,0) node [below] {2} -- (1,0);
  \draw (3,0) node [below] {4} -- (3,0);
  \draw (4.2,0) node {.};
  \draw (4.4,0) node {.};
  \draw (4.6,0) node {.};
  \draw (4.8,0) -- (5.75,0) node [below] {\large{n}};
  \fill[black] (0,0) circle (2pt);
  \fill[black] (1,0) circle (2pt);
  \fill[black] (2,0) circle (2pt);
  \fill[black] (3,0) circle (2pt);
  \fill[black] (5.8,0) circle (2pt);
\end{tikzpicture}
\end{center}

and $A_n$

\begin{center}
\begin{tikzpicture}

  \draw[->,>=stealth] (0,0) node [below] {1} -- (0.95,0);
  \draw[->,>=stealth] (1,0) node [below] {2} -- (1.95,0);
  \draw[->,>=stealth] (2,0) node [below] {3} -- (2.95,0);
  \draw[->,>=stealth] (3,0) node [below] {4} -- (4,0);
  \draw (4.2,0) node {.};
  \draw (4.4,0) node {.};
  \draw (4.6,0) node {.};
  \draw[->,>=stealth] (4.8,0) -- (5.75,0) node [below] {\large{n}};
  \fill[black] (1,0) circle (2pt);
  \fill[black] (0,0) circle (2pt);
  \fill[black] (2,0) circle (2pt);
  \fill[black] (3,0) circle (2pt);
  \fill[black] (5.8,0) circle (2pt);
\end{tikzpicture}
\end{center}

are mutation equivalent.
\end{lemma}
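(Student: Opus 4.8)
The plan is to deduce this from the general fact that any two acyclic orientations of a tree are linked by mutations performed only at sinks and sources. The key preliminary observation is this: if $k$ is a sink or a source of a quiver $Q$ whose underlying graph is a tree, then the mutation $\mu_k$ merely reverses all arrows incident with $k$. Indeed, there are no paths $i\to k\to j$ through a sink or a source, so the rule that modifies the number of arrows between the other vertices never creates a new arrow; in particular the underlying graph is preserved, so $\mu_k(Q)$ is again an orientation of the same tree. I shall call such a move an \emph{admissible flip}. Since $A_n$ and $A_n^{alt}$ are both acyclic orientations of the path $P_n$ on vertices $1,\dots,n$, and $\mu_k$ is an involution, it is enough to show that every acyclic orientation $Q$ of $P_n$ can be carried to the linear orientation $L_n\colon 1\to 2\to\dots\to n$ by a sequence of admissible flips; applying this to $Q=A_n^{alt}$ and noting $L_n=A_n$ gives the lemma.

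I would prove the displayed claim by induction on $n$, the cases $n\le 2$ being trivial. For the inductive step, regard $n$ as a leaf of $P_n$ with unique neighbour $n-1$, and let $Q'$ be the restriction of $Q$ to the subpath $P_{n-1}$ on $\{1,\dots,n-1\}$; it is an acyclic orientation of $P_{n-1}$, so by the induction hypothesis there is a sequence of admissible flips of $P_{n-1}$, say at vertices $w_1,\dots,w_k$, transforming $Q'$ into $L_{n-1}$. I then replay this sequence on $P_n$, maintaining the invariant that after replaying the first $i$ flips of the model sequence (together with any auxiliary flips at $n$ inserted along the way) the restriction of the current orientation to $P_{n-1}$ equals the $i$-th quiver of the model sequence. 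A flip at $w_i\le n-2$ lifts verbatim, since in $P_n$ this vertex has the same neighbours as in $P_{n-1}$. A flip at $w_i=n-1$ is admissible in $P_n$ precisely when the edge $\{n-1,n\}$ is oriented consistently with $n-1$ being a sink or a source; if it is not, I first perform an admissible flip at the leaf $n$ (always a sink or a source), which reverses that edge and does not touch $P_{n-1}$, and then flip $n-1$. Once the sequence is exhausted, the restriction to $P_{n-1}$ is $L_{n-1}$; one final flip at $n$, if needed, orients $\{n-1,n\}$ as $n-1\to n$ and yields $L_n$.

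The only delicate point — and the one I would verify most carefully — is the handling of flips at the neighbour $n-1$ of the removed leaf: one must check that interleaving flips at $n$ always restores admissibility at $n-1$ without disturbing the part of the quiver already brought under control, which is exactly what the stated invariant encodes. Everything else is routine: acyclicity is automatic for an orientation of a tree, the sink/source mutation computation is immediate from the definition, and reversibility of the whole procedure follows from $\mu_k$ being an involution. (For any fixed small $n$ one can instead exhibit an explicit mutation sequence by hand, but the tree argument is cleaner and works uniformly.)
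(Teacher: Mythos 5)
Your proof is correct, and it takes a recognizably different route from the paper's. The paper argues by induction on $n$ with a completely explicit mutation sequence: it applies $\mu_2\mu_4\cdots\mu_{n-2}\,\mu_1\mu_3\cdots\mu_{n-1}$ (or the parity-swapped version) to $A_n^{alt}$, observes that the result is $A_{n-1}^{alt}$ with the extra arrow $(n-1,n)$ attached, and then invokes the inductive hypothesis on the first $n-1$ vertices. You instead isolate the structural reason the explicit sequence works: mutation at a sink or source of a tree quiver only reverses incident arrows, so such mutations act on the set of acyclic orientations of the path, and one then shows by a replay-and-interleave induction that every orientation of $P_n$ reaches the linear one. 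Your key lemma (that sink/source mutations preserve the underlying tree) is exactly the fact the paper uses implicitly without comment --- all the mutations in its displayed sequence are at sinks or sources of the quivers at the moment they are applied --- so your write-up makes explicit a verification the paper leaves to the reader. What your approach buys is generality (the same argument gives mutation equivalence of all acyclic orientations of any tree, not just the two orientations of $P_n$ at hand) and a cleaner correctness proof via the stated invariant; what the paper's approach buys is a concrete, checkable mutation sequence and a shorter induction. The one point you flag as delicate, the interleaving of auxiliary flips at the leaf $n$ before a flip at $n-1$, does go through: flipping the leaf only reverses the edge $\{n-1,n\}$ and leaves the restriction to $P_{n-1}$ untouched, so it restores sink/source status at $n-1$ without breaking the invariant. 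Both arguments are sound; yours would serve as a valid replacement and in addition justifies the paper's subsequent claim that the leapfrog's dual quiver is a direct limit of quivers mutation equivalent to the $A_n$.
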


\begin{proof}
Prove by induction on $n.$ For $n = 1$ the statement is tautological. Apply to $A_n^{alt}$ one of the following sequences of mutations:

$$\mu_2 \circ \mu_4 \circ \ldots \circ \mu_{n-2} \circ \mu_1 \circ \mu_3 \circ \ldots \circ \mu_{n-1}, \quad  n \vdots 2;$$
$$\mu_1 \circ \mu_3 \circ \ldots \circ \mu_{n-2} \circ \mu_2 \circ \mu_4 \circ \ldots \circ \mu_{n-1}, \quad n \not\vdots 2.$$

We obtain a quiver $A'$ with vertex set $A'_0 = \left\{1,\ldots,n\right\}$ and arrows set $A'_1 = (A_{n-1}^{alt})_1 \cup \left\{(n - 1, n)\right\}.$ By an assumption of induction there is a sequence of mutations leading full subquiver of $A'$ on vertex set $\left\{1,\ldots,n - 1\right\}$ (that is $A_{n-1}^{alt}$) to $A_n.$ Applying this sequence to $A'$ we obtain $A_n,$ that completes the proof.
\end{proof}

For locally finite collections there is an analogue of Theorem \ref{fount}:

\begin{theorem} {\normalfont (\cite{JP})} \label{locfin}
Consider arbitrary locally finite collection $\mathfrak{T}.$ There exists cluster map $\rho^{\mathscr{T}} : \mbox{obj} \mathscr{E} \to \mathbb{Q}(x_t)_{t \in T}$ with following properties:

\begin{itemize}
\item[(i)] $\rho^{\mathscr{T}} (\phi^{-1}(t)) = x_t$ for all $t.$
\item[(ii)] If $\phi^{-1}((k,l)) \in \mathscr{E},$ then $\rho^{\mathscr{T}}(\phi^{-1}((k,l)))$ is a  non-zero Laurent polynomial (in $x_t$ and $y_{t'}$). The coefficients in the numerator of this polynomial are positive integers.
\end{itemize}

\end{theorem}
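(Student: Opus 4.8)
The plan is to reduce Theorem~\ref{locfin} to the finite-type facts already recorded above, Theorems~\ref{FZ} and~\ref{C-C}, by exploiting that both arcs and flips are \emph{local}: a flip replaces one arc by one arc and leaves the rest of a maximal non-crossing collection untouched. So, given an indecomposable $\phi^{-1}((k,l)) \in \mathscr{E}$, I would fix a finite sequence of flips realizing it from $\mathscr{T}$; since $\mathfrak{T}$ is locally finite, all arcs appearing along this sequence have their endpoints in some finite interval $I = [p,q] \subset \mathbb{Z}$, and after enlarging $I$ we may assume in addition that every arc of $\mathfrak{T}$ relevant to the region around $(k,l)$ (crossed by one of the flips, or bounding a quadrilateral of one, together with its $\mathfrak{T}$-neighbours) also lies in $I$.

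Next I would pass to the convex polygon $P_I$ with vertices $p, p+1, \ldots, q$, whose sides are the segments $(p,p{+}1), \ldots, (q{-}1,q)$ and the long side $(p,q)$. The arcs of $\mathfrak{T}$ with both ends in $I$ are pairwise non-crossing diagonals of $P_I$, hence extend to a triangulation $\tau$ of $P_I$; by Proposition~\ref{bij} and the correspondence between triangulations and cluster algebras of type $A$ recalled above, $\tau$ is the initial seed of a cluster algebra of type $A_{q-p-2}$ with $(q-p+1)$ coefficients, and the flips performed inside $I$ are the corresponding seed mutations. Thus $(k,l)$ is either a side of $P_I$ (a coefficient) or a diagonal produced after finitely many mutations (a cluster variable) of this finite cluster algebra, and by Theorems~\ref{FZ} and~\ref{C-C} it carries a canonical Laurent expansion with positive integer numerator, namely $CC$ of the associated indecomposable representation of the $A_{q-p-2}$-quiver. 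I would then \emph{define} $\rho^{\mathscr{T}}(\phi^{-1}((k,l)))$ to be this expansion, rewritten in the variables $x_t$, $t \in T = \mathrm{ind}\,\mathscr{T}$.

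The heart of the argument is independence of the window $I$. Enlarging $I$ to $I' = [p',q'] \supset I$ replaces $\tau$ by a triangulation $\tau'$ of $P_{I'}$ extending $\tau$ near the side $(p,q)$, and the indecomposable $A_{q-p-2}$-representation attached to $(k,l)$ is obtained from the $A_{q'-p'-2}$-one by deleting the zero vector spaces and zero maps sitting at the extra vertices; exactly as in the proof of Proposition~\ref{infan}, adjoining such zero data changes neither the quiver Grassmannians, nor their Euler characteristics, nor the monomials occurring in formula~(\ref{CCV}), so the Laurent expansion is literally unchanged. Granting well-definedness, property~(i) is immediate (choose a window in which $t$ is already a diagonal of $\tau$, so $\phi^{-1}(t)$ is an initial cluster variable), and property~(ii) is the Laurent phenomenon with positivity, Theorem~\ref{FZ}(a), read off in a window containing $(k,l)$. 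That $\rho^{\mathscr{T}}$ is a cluster map finally reduces to three local checks — constancy on isomorphism classes, multiplicativity on direct sums, and the exchange identity $\rho(m)\rho(l) = \rho(b) + \rho(b')$ for indecomposables $m,l$ with $\mathrm{dim}\,\mathrm{Ext}^1_{\mathscr{D}}(m,l) = 1$ — each of which involves only finitely many objects and a single pair of triangles of $\mathscr{D}$, hence holds inside one finite window, where it is the known identity for type $A_{q-p-2}$.

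I expect the main obstacle to be precisely this window-independence: making rigorous that the finite triangulations $\tau \subset \tau'$ can be chosen compatibly, that the induced sequences of mutations agree with the given flips (which forces the auxiliary diagonals completing $\tau$ to stay away from the flipped arcs), and that the Laurent expansions are genuinely — not merely heuristically — stable under enlargement; this is where the care taken in the proof of Proposition~\ref{infan} must be reproduced in greater generality. A secondary subtlety, specific to the locally finite case and absent in the fountain case of Proposition~\ref{foarb}, is that here $E = \mathrm{ind}\,\mathscr{D}$ by Theorem~\ref{obj}(i), so $\rho^{\mathscr{T}}$ must be defined on \emph{every} indecomposable object of $\mathscr{D}$; one must check that every arc $(k,l)$ is indeed captured by some finite window reachable from $\mathfrak{T}$ by finitely many flips, which once more rests on local finiteness.
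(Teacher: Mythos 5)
First, a point of comparison: the paper does not prove Theorem~\ref{locfin} at all --- it is quoted from \cite{JP}, where $\rho^{\mathscr{T}}$ is constructed categorically as a Caldero--Chapoton map, the value on an object $c$ being a sum over submodule Grassmannians of the $\mathscr{T}$-module $\mathscr{D}(T,\Sigma c)$, with the Laurent property and the exchange identity established by homological arguments rather than by reduction to finite type. So your window-reduction is necessarily a different route from the source, and as written it has a concrete gap.

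The gap is in the passage to the polygon $P_I$. For a locally finite $\mathfrak{T}$, the arcs of $\mathfrak{T}$ with both endpoints in $I=[p,q]$ do \emph{not} in general triangulate $P_I$: maximality of $\mathfrak{T}$ is maximality among non-crossing arcs of the whole infinity-gon, so a diagonal of $P_I$ can be absent from $\mathfrak{T}$ because it crosses an arc of $\mathfrak{T}$ with one endpoint outside $I$ (for instance, if $(0,100)\in\mathfrak{T}$ and $I=[-5,5]$, the diagonal $(-1,1)$ crosses it and hence is excluded). You therefore must adjoin auxiliary diagonals to complete $\tau$, and the initial extended cluster of your finite algebra then contains variables attached to arcs that are not in $T$; the Laurent expansion supplied by Theorems~\ref{FZ} and~\ref{C-C} is an expansion in these wrong variables, and you give no mechanism for eliminating them or for showing independence of the choice of completion. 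Your window-independence argument only treats enlargement of $I$, i.e.\ adjoining zero spaces at new vertices as in Proposition~\ref{infan}; it does not treat the choice of completion inside a fixed window, which genuinely changes the quiver. A second, related problem: condition (iii) of a cluster map refers to $\mathrm{Ext}^1$ and distinguished triangles in $\mathscr{D}$, and you would need to show that these match the exchange triangles of the cluster category of the finite window --- a structural statement about $\mathscr{D}$ that your argument assumes rather than proves --- and condition (iv) (that some reachable cluster tilting subcategory maps to a transcendence basis) is not addressed at all. These are exactly the points where the categorical construction of \cite{JP} does work that a windowing argument cannot simply inherit from the finite-type theorems.
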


Consider $\mathfrak{T'_0} = \left\{(-n, n); (-n, n + 1)| n \in \mathbb{N} \right\}$ (such collection is called a {\it leapfrog}. Dual quiver to it is a direct limit of $A_n^{alt},$ and we have the following simple fact. 

\begin{lemma}
A leapfrog provides a filtration
$$A_1^{alt} \hookrightarrow A_2^{alt} \hookrightarrow \ldots \hookrightarrow A_n^{alt} \hookrightarrow \ldots$$
of quivers, where $n-$th quiver is mutationally equivalent to $A_n.$
\end{lemma}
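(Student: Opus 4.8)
The plan is to make the quiver dual to the leapfrog completely explicit and then quote the preceding Lemma. First I would list the arcs of $\mathfrak{T'_0}$ in the order in which each one is nested inside the next,
$$(-1,1),\ (-1,2),\ (-2,2),\ (-2,3),\ (-3,3),\ (-3,4),\ \ldots$$
and record that any two consecutive arcs in this list share exactly one endpoint and occur as two sides of one triangle of the maximal non-crossing collection $\mathfrak{T'_0}$, while two non-consecutive arcs are disjoint with one nested inside the other. Reading off the quiver dual to $\mathfrak{T'_0}$ — its vertices are the arcs, and two arcs are joined by an arrow exactly when they are two of the three sides of a common triangle, the direction being fixed by the orientation of the plane — one sees that the $k$-th arc is joined only to the $(k-1)$-st and the $(k+1)$-st, so the dual quiver is the one-sided infinite path on these vertices. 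Computing the orientation inside two successive triangles shows that the third vertex of the triangle shared by the $k$-th and $(k+1)$-st arcs lies alternately to the left and to the right of $0$; hence the path carries the alternating orientation, and its truncation to the first $n$ vertices is exactly the quiver $A_n^{alt}$ of the preceding Lemma. This recovers the statement, made in the remark preceding the Lemma, that the quiver dual to the leapfrog is the direct limit of the $A_n^{alt}$.

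It then remains to see that these truncations form a filtration by full subquivers. Since the $k$-th arc is a side of a common triangle only with the $(k-1)$-st and $(k+1)$-st arcs, among the first $n+1$ arcs the $(n+1)$-st is joined in the dual quiver only to the $n$-th; hence $A_{n+1}^{alt}$ is obtained from $A_n^{alt}$ by attaching one new vertex together with one new arrow at the end, and contains $A_n^{alt}$ as a full subquiver. This yields the filtration
$$A_1^{alt}\hookrightarrow A_2^{alt}\hookrightarrow\cdots\hookrightarrow A_n^{alt}\hookrightarrow\cdots$$
whose direct limit is the quiver dual to the leapfrog, in parallel with the filtration~(\ref{filt}) of associahedra.

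Finally, the preceding Lemma states that $A_n^{alt}$ is mutation equivalent to $A_n$, which is the remaining claim. The proof is therefore short, and the only genuine work lies in the first step: correctly reading off the arrows of the quiver dual to the leapfrog triangulation and, above all, their orientations, together with the nestedness of the arcs that makes the truncations full subquivers. I expect the orientation computation — reconciling the plane-orientation convention with the alternating pattern — to be the single delicate point, and it is worth accompanying it by a small picture of two adjacent triangles of the triangulation.
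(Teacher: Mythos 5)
Your argument is correct and is exactly the one the paper intends: the paper states this lemma without proof as a "simple fact," asserting only that the quiver dual to the leapfrog is the direct limit of the $A_n^{alt}$ and relying on the preceding lemma for the mutation equivalence $A_n^{alt}\sim A_n$. Your proposal simply makes explicit the omitted verification (the nested ordering of the arcs, the path structure of the dual quiver, the alternating orientation, and the full-subquiver inclusions), which is a useful filling-in of detail rather than a different route.
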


Define as usual $T'_0, \mathscr{T'_0}, E'_0$ and $\mathscr{E'0}.$ We have the following simple consequence.

\begin{corollary}
A subalgebra $\mathcal{A}_{\mathscr{T'_0}}$ in $\mathbb{Q}(x_t)$ generated by $\rho^{\mathscr{T}}(\mbox{obj} \mathscr{E'_0})$ is a cluster algebra of type $A_{\infty}.$
\end{corollary}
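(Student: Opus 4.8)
The plan is to localize everything to a finite stage of the filtration and then transport the results already proved in type $A_n$, exactly as in the proofs of Propositions \ref{infan} and \ref{foarb}. By the two preceding lemmas, the dual quiver of the leapfrog $\mathfrak{T'_0}$ is the direct limit of the quivers $A_n^{alt}$, and each $A_n^{alt}$ is mutation equivalent to the standard orientation $A_n$. So I would first observe that the initial data carried by $\mathscr{T'_0}$---namely the transcendence basis $\{x_t : t \in T'_0\}$ of $\mathbb{Q}(x_t)_{t\in T'_0}$ together with the quiver dual to $\mathfrak{T'_0}$---is precisely an initial seed for a cluster algebra of type $A_\infty$: any finite sequence of mutations starting from $\vec{A}_{\infty}$ moves only finitely many vertices, hence takes place inside some $A_n$, which by the first lemma is mutation equivalent to $A_n^{alt}$, so the two exchange graphs and the two sets of cluster variables coincide at every finite stage.

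Next I would match flips of arcs with quiver mutations. By Theorem \ref{obj}(i), for the locally finite collection $\mathfrak{T'_0}$ one has $E'_0 = \mbox{ind}\,\mathscr{D}$, so every indecomposable object of $\mathscr{E'_0}$ is reached from $\mathscr{T'_0}$ by finitely many flips; such a finite sequence involves only finitely many arcs, all of which lie (after enlarging $n$) in the $n$-th piece of the filtration, where flips of arcs are exactly the mutations of $A_n^{alt}$, hence---via the first lemma---the mutations of an $A_n$-cluster algebra. This is the analogue of the ``adding zero vector spaces'' step in Proposition \ref{infan}: passing from the $n$-th stage to the whole leapfrog changes neither the object reached nor the flip relations among the finitely many arcs involved.

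Then I would identify the values of the cluster map with cluster variables. By Theorem \ref{locfin} the map $\rho^{\mathscr{T'_0}}$ is a cluster map, so axiom (iii) provides exactly the exchange relations attached to the triangles, i.e.\ to the flips; restricting to a finite stage and applying Theorem \ref{C-C} (or Theorem \ref{FZ}) identifies $\rho^{\mathscr{T'_0}}$ on the arcs of an $A_n^{alt}$-subcollection with the Caldero--Chapoton map, hence with the cluster variables, of the corresponding $A_n$-cluster algebra. Letting $n$ grow, $\mathcal{A}_{\mathscr{T'_0}}$ is generated by a family of elements in bijection with the cluster variables of $\mathcal{A}_{\vec{A}_{\infty}}$, compatibly with all exchange relations and with the identification of initial seeds above; assembling these identifications over the finite stages and passing to the direct limit, as in Proposition \ref{foarb}, yields an isomorphism $\mathcal{A}_{\mathscr{T'_0}} \cong \mathcal{A}_{\vec{A}_{\infty}}$, which is the assertion.

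The main obstacle I expect is the bookkeeping of the direct limit: one must check that the localization to a finite $A_n^{alt}$ is genuinely uniform---that the ``finitely many flips'' hypothesis really confines all relevant arcs to a single finite stage, that the mutation equivalences $A_n^{alt}\simeq A_n$ supplied by the first lemma can be fitted together with the inclusions of the filtration so as to give a well-defined map on the limit rather than only on each stage, and that the cluster map $\rho^{\mathscr{T'_0}}$ does not ``see'' the tail of the leapfrog, so that its restriction to a finite stage really coincides with the Caldero--Chapoton map there. Once these compatibilities are in place, the rest is the same routine transport of structure already used in this section.
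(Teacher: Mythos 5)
The paper itself gives no proof of this corollary: it is announced as a ``simple consequence'' of the two preceding lemmas (the quiver dual to the leapfrog is the direct limit of the quivers $A_n^{alt}$, and each $A_n^{alt}$ is mutation equivalent to $A_n$). Your proposal follows exactly that intended route and fills in far more detail than the paper does, so in strategy you are aligned with the author.

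The problem is that the difficulty you defer to your final paragraph as ``bookkeeping'' is the actual mathematical content, and it is not resolved. The paper defines a cluster algebra of type $A_\infty$ by requiring the \emph{specific} linearly oriented quiver $\vec{A}_\infty$ in the initial seed, while the quiver dual to $\mathfrak{T'_0}$ is the direct limit of the $A_n^{alt}$, i.e.\ the alternating orientation of the $A_\infty$ diagram. These two infinite quivers are \emph{not} connected by finitely many mutations: the mutation sequences produced by the first lemma touch on the order of $n$ vertices, grow with $n$, and do not commute with the inclusions $A_n^{alt}\hookrightarrow A_{n+1}^{alt}$ of the filtration, so the stage-$n$ identifications do not induce a map on the direct limit. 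Hence your concluding step, ``assembling these identifications over the finite stages and passing to the direct limit yields $\mathcal{A}_{\mathscr{T'_0}}\cong\mathcal{A}_{\vec{A}_\infty}$,'' does not follow as written; and any such isomorphism could not respect the cluster combinatorics anyway, since by Theorem \ref{obj} every arc of the $\infty$-gon is reachable from the leapfrog by finitely many flips, whereas from a seed with quiver $\vec{A}_\infty$ (one side of a fountain) only the arcs on that side are ever reached --- exactly the phenomenon the paper records in the remark after Proposition \ref{lfarb}. The statement is true, and immediate from the two lemmas, only under the tacitly broadened reading of ``type $A_\infty$'' as ``the initial quiver is an orientation of the $A_\infty$ diagram all of whose finite truncations are mutation equivalent to $A_n$''; if you insist on the paper's literal definition you must either adopt that broader definition explicitly or settle for an algebra isomorphism in the style of Proposition \ref{lfarb}, which is a weaker claim. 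Separately, your appeal to Theorems \ref{FZ} and \ref{C-C} to identify the values of $\rho^{\mathscr{T'_0}}$ with cluster variables is not needed here: the corollary concerns only the combinatorial seed structure carried by $\mathfrak{T'_0}$, which Theorem \ref{locfin} already equips with the required transcendence basis and exchange relations.
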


\begin{proposition} \label{lfarb}
Consider arbitrary $\mathfrak{T}$ being locally finite. There exists such isomorphism
$$ \psi_{T'_0, T}: \quad \mathcal{A}_{\mathscr{T'_0}} \rightarrow \mathcal{A}_{\mathscr{T}} $$
that 
$$ \psi_{T'_0, T} (\rho^{\mathscr{T'}_0} (E'_0)) = \rho^{\mathscr{T}} (E).$$
\end{proposition}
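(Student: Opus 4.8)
The plan is to follow the template of the proof of Proposition~\ref{foarb}, replacing the fountain case of Theorem~\ref{obj} by its locally finite case. First I would invoke Theorem~\ref{obj}(i): since both $\mathfrak{T}'_0$ (a leapfrog, hence locally finite) and $\mathfrak{T}$ are locally finite, we have $E'_0 = \mbox{ind}\,\mathscr{D} = E$; in particular the two sets of indecomposable objects on which $\rho^{\mathscr{T}'_0}$ and $\rho^{\mathscr{T}}$ are evaluated literally coincide, and both equal $\phi^{-1}(\{(k,l): k<l,\ l-k\ge 2\})$.

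Then I would \emph{define} $\psi_{T'_0, T}$ on generators by $\psi_{T'_0,T}(\rho^{\mathscr{T}'_0}(\phi^{-1}(k,l))) = \rho^{\mathscr{T}}(\phi^{-1}(k,l))$ for every arc $(k,l)$, and extend it to all of $\rho^{\mathscr{T}'_0}(\mbox{obj}\,\mathscr{E}'_0)$ and then to the whole subalgebra $\mathcal{A}_{\mathscr{T}'_0}$ using multiplicativity of a cluster map (property (ii): $\rho(c_1\oplus c_2) = \rho(c_1)\rho(c_2)$, so the images of decomposables are forced) together with property (i), which fixes the images of the objects in $\mathscr{T}'_0$. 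The point is that $\mathcal{A}_{\mathscr{T}'_0}$ and $\mathcal{A}_{\mathscr{T}}$ are generated as $\mathbb{Q}$-algebras by the Caldero--Chapoton values on the same indexing set of indecomposables, so a rule given on these generators is all the data needed to specify $\psi_{T'_0,T}$.

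The well-definedness is the only real thing to check, and here the key observation --- exactly as in Proposition~\ref{foarb} --- is that every relation among the generators of $\mathcal{A}_{\mathscr{T}'_0}$ is of Caldero--Chapoton exchange type, coming from a non-split pair of triangles $m\to b\to l$, $l\to b'\to m$ with $\mbox{dim}\,\mbox{Ext}^1_{\mathscr{D}}(m,l)=1$, via property (iii): $\rho(m)\rho(l)=\rho(b)+\rho(b')$. These triangles live intrinsically in the category $\mathscr{D}$ and do not depend on the chosen cluster tilting subcategory; hence exactly the same relations hold among the corresponding generators of $\mathcal{A}_{\mathscr{T}}$, and $\psi_{T'_0,T}$ respects them. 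Running the same argument with the roles of $T'_0$ and $T$ exchanged produces a two-sided inverse, so $\psi_{T'_0,T}$ is an isomorphism; by construction it sends $\rho^{\mathscr{T}'_0}(E'_0)$ onto $\rho^{\mathscr{T}}(E)$.

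The main obstacle, and the reason the argument is not purely formal, is justifying that these exchange relations are \emph{all} the relations, i.e.\ that the assignment on generators neither accidentally identifies nor separates elements it should not. This is precisely the input to be imported from \cite{JP} (and ultimately \cite{BIRS}): the cluster map $\rho^{\mathscr{T}}$ of Theorem~\ref{locfin} is pinned down by properties (i)--(iv), and property (iv) --- the existence of a reachable cluster tilting subcategory whose image is a transcendence basis --- fixes the ambient field $\mathbb{Q}(x_t)_{t\in T}$ and guarantees that no relations beyond the exchange relations can occur. Once that is granted, the verification above goes through verbatim; one may moreover note, using the Lemma that the leapfrog yields the filtration $A_1^{alt}\hookrightarrow A_2^{alt}\hookrightarrow\cdots$ whose $n$-th term is mutation equivalent to $A_n$, that the common isomorphism type of all these $\mathcal{A}_{\mathscr{T}}$ is that of the cluster algebra of type $A_\infty$ from Section~3.1.
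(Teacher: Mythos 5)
Your proposal is correct and follows essentially the same route as the paper, whose proof of this proposition simply says ``analogously to the proof of Proposition \ref{foarb}'': identify $E'_0 = E = \mathrm{ind}\,\mathscr{D}$ via Theorem \ref{obj}(i), define $\psi_{T'_0,T}$ on the Caldero--Chapoton images of indecomposables, extend by properties (i)--(ii) of a cluster map, and observe that the exchange relations from property (iii) are intrinsic to the triangles of $\mathscr{D}$ and hence independent of the chosen cluster tilting subcategory. Your added remarks on why these are \emph{all} the relations only make explicit a point the paper leaves implicit.
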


\begin{proof}
Analogously to the proof of Proposition \ref{foarb}
\end{proof}

\begin{corollary}
An image under Caldero-Chapoton map of each cluster tilting subcategory of $\mathscr{D}$ is either a cluster algebra of type $A_{\infty},$ if corresponding collection of arcs is locally finite, or a disjoint union of two cluster algebras of type $A_{\infty},$ if corresponding collection has a fountain.
\end{corollary}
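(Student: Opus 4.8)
The plan is to reduce the corollary, by way of the Holm and J{\o}rgensen classification of cluster tilting subcategories, to the two model situations already settled in Propositions \ref{foarb} and \ref{lfarb}, together with the two observations recorded just before them.

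First I would invoke Theorems 4.3 and 4.4 of \cite{HJ}: under the bijection $\phi$, a cluster tilting subcategory $\mathscr{T}$ of $\mathscr{D}$ corresponds to a maximal collection $\mathfrak{T}$ of pairwise non-crossing arcs, and exactly one of two mutually exclusive alternatives holds --- either $\mathfrak{T}$ is locally finite, or $\mathfrak{T}$ has a fountain at a (necessarily unique) integer $n$. A collection with infinitely many arcs ending at $n$ cannot also have only finitely many arcs at every integer, so there is neither overlap nor a third case, and it suffices to treat each alternative on its own. In either case the object to describe is the subalgebra $\mathcal{A}_{\mathscr{T}}$ generated by the Caldero-Chapoton values $\rho^{\mathscr{T}}(\mbox{obj}\,\mathscr{E})$, and by Theorem \ref{obj} the reachable subcategory $\mathscr{E}$ is all of $\mathscr{D}$ when $\mathfrak{T}$ is locally finite and consists of the two ``halves'' cut out by $n$ when $\mathfrak{T}$ has a fountain; so $\mathcal{A}_{\mathscr{T}}$ is exactly the image under the Caldero-Chapoton map of the cluster tilting subcategory.

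If $\mathfrak{T}$ is locally finite, I would apply Proposition \ref{lfarb} to obtain an isomorphism $\psi_{T'_0, T} : \mathcal{A}_{\mathscr{T'_0}} \to \mathcal{A}_{\mathscr{T}}$ carrying $\rho^{\mathscr{T'}_0}(E'_0)$ onto $\rho^{\mathscr{T}}(E)$, and combine it with the Corollary preceding Proposition \ref{lfarb}, which identifies $\mathcal{A}_{\mathscr{T'_0}}$ as a cluster algebra of type $A_{\infty}$; transporting that structure along $\psi_{T'_0, T}$ gives the claim for $\mathcal{A}_{\mathscr{T}}$. If instead $\mathfrak{T}$ has a fountain at $n$, I would proceed symmetrically: Proposition \ref{foarb} supplies an isomorphism $\psi_{T_0, T} : \mathcal{A}_{\mathscr{T_0}} \to \mathcal{A}_{\mathscr{T}}$ carrying $\rho^{\mathscr{T}_0}(E_0)$ onto $\rho^{\mathscr{T}}(E)$, and the observation made just before Proposition \ref{foarb} --- that by Theorem \ref{fount}(iii) the field splits as $\mathbb{Q}(x_t) \sqcup \mathbb{Q}(y_{t'})$ and, comparing $\mathfrak{T}_0$ with the initial quiver $\vec{A}_{\infty}$ and the flips of arcs with mutations, $\mathcal{A}_{\mathscr{T_0}}$ is a disjoint union of two cluster algebras of type $A_{\infty}$ --- transports along $\psi_{T_0, T}$ to the same conclusion for $\mathcal{A}_{\mathscr{T}}$.

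The one delicate point --- and the reason the substance of the argument lives in Propositions \ref{foarb} and \ref{lfarb} rather than here --- is that $\psi_{T'_0, T}$ and $\psi_{T_0, T}$ must be recognized as isomorphisms of \emph{cluster} algebras, not merely of rings, so that the property of being (a disjoint union of) cluster algebras of type $A_{\infty}$ is genuinely carried across. This follows from property (iii) in the definition of a cluster map: the exchange relations among the Caldero-Chapoton images are dictated by the non-split distinguished triangles of $\mathscr{D}$, which are intrinsic to $\mathscr{D}$ and hence identical on the two sides of each isomorphism, so matching the distinguished generating families $\rho^{\mathscr{T'}_0}(E'_0) \mapsto \rho^{\mathscr{T}}(E)$ and $\rho^{\mathscr{T}_0}(E_0) \mapsto \rho^{\mathscr{T}}(E)$ automatically matches clusters with clusters and mutations with mutations. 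Once that is granted, everything else is bookkeeping with the Holm and J{\o}rgensen dichotomy.
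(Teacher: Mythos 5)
Your argument is correct and is exactly the assembly the paper intends: the Holm--J{\o}rgensen dichotomy reduces to the two model collections $\mathfrak{T}_0$ and $\mathfrak{T}'_0$, whose images are identified as (disjoint unions of) cluster algebras of type $A_{\infty}$ in the discussion preceding Propositions \ref{foarb} and \ref{lfarb}, and those propositions transport the structure to an arbitrary $\mathfrak{T}$. The paper leaves this corollary without an explicit proof precisely because it follows by this bookkeeping, so your write-up matches the intended route.
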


\begin{remark}
For cluster algebras corresponding to finite quivers Saleh proved in \cite{S} (Corollary 3.4) that each automorphism of a field, which is an isomorphism of two cluster algebras and bijectively maps the set of cluster variables to the set of cluster variables, bijectively maps clusters to clusters.

Propositions \ref{foarb} and \ref{lfarb} provide examples of isomorphisms of cluster algebras (of infinite type), which bijectively map the set of cluster variables to the set of cluster variables, but does not map clusters to clusters. Indeed, there are infinitely many collections $\mathfrak{T}$ of arcs having a fountain at $n,$ which can not be obtained from $\mathfrak{T_0}$ by finitely many flips, e.g. $\mathfrak{T} = \left\{(n, n \pm 2k); (n - 2(k+1), n - 2k); (n + 2k, n + 2(k+1)) |k \in \mathbb{N}  \right\}.$ Clusters of corresponding algebras are images under $\rho^{\mathscr{T}} (\phi^{-1})$ (respectively $\rho^{\mathscr{T_0}}(\phi^{-1})$) of collections which can be obtained from $\mathfrak{T}$ (respectively $\mathfrak{T_0}$) by finitely many flips. That means that $\psi_{T_0, T}$ does not map clusters to clusters. The similar can be said for locally finite collections, as $\mathfrak{T}$ we may take $\left\{(-n, n); (-(n+1), n) | n \in \mathbb{N} \right\}.$ 
\end{remark}


We see that, on the one hand, the construction of Holm and J{\o}rgensen includes more information than the our one, since it works with categories. On the other hand, algebras corresponding to cluster tilting subcategories of their category are described in our terms in a very simple manner. Moreover, we defined cluster algebras of types $B_{\infty}, C_{\infty}$ and $D_{\infty}$ for which we proved the Proposition \ref{infan}, while corresponding cluster categories are not yet constructed. A naturalness of a description of these algebras gives the hope that such categories can be defined without big changes in the Holm-J{\o}rgensen construction.







\begin{thebibliography}{BP2}

\bibitem[B] {B} V. M. Buchstaber. Ring of Simple Polytopes and Differential Equations. Proc. Steklov Inst. Math., 263 (2008), 13–37.

\bibitem[BIRS] {BIRS} A. B. Buan, O. Iyama, I. Reiten, and J. Scott. Cluster structures for 2-Calabi-Yau categories and unipotent groups. Compositio Math. 145 (2009), 1035--1079. arXiv: math.RT/0701557

\bibitem[BK] {BK} V.M. Buchstaber and E.V. Koritskaya. The Quasi-Linear Burgers-Hopf Equation and the Stasheff Polytopes. Funct. Anal. Appl. 41:3 (2007), 196–-207.

\bibitem[BM] {BM} R. Blind and P. Mani. On puzzles and polytope isomorphism. Aequationes Math., 34 (1987), 287--297.

\bibitem[BZ] {BZ} A. Berenstein and A. Zelevinsky. Quantum cluster algebras. Adv. Math. 195 (2005), 405–-455. arXiv: math.QA/0404446

\bibitem[CC] {CC} P. Caldero and F. Chapoton. Cluster algebras as Hall algebras of quiver representations. Comment. Math. Helv. 81 (2006), no. 3, 595-–616. arXiv: math.RT/0410187

\bibitem[CZ] {CZ} C. Ceballos and G. M. Ziegler. Three non-equivalent realizations of associahedron, {\i preprint} (2010). arXiv: math.MG/1006.3487.

\bibitem[FR] {FR} S. Fomin and N. Reading. Root systems and generalized associahedra. IAS/Park City Math. Ser., 13, Amer. Math. Soc., Providence, RI (2007). arXiv: math.CO/0505518.

\bibitem[FZ1] {FZ1} S. Fomin and A. Zelevinsky. Cluster algebras I: Foundations. J. Amer. Math. Soc. 15 (2002),
497-–529. arXiv: math.RT/0104151.

\bibitem[FZ2] {FZ2}  S. Fomin and A. Zelevinsky. Cluster algebras II: Finite type classification. Invent. Math. 154
(2003), 63–-121. arXiv: math.RA/0208229.

\bibitem[FZ3] {FZ3}  S. Fomin and A. Zelevinsky. Y -systems and generalized associahedra. Annals of Math. 158 (2003), 977--1018. arXiv: hep-th/0111053

\bibitem[G] {G}  P. Gabriel. Unzerlegbare Darstellungen I. Manuscripta Math. 6 (1972), 71–-103.

\bibitem[GG] {GG} R. Gopakumar and D. J. Gross. Mastering the master field. Nucl. Phys. B    451  (1995), 379.

\bibitem[GM] {GM} D. J. Gross and A. Matytsin. Some Properties Of Large N Two-Dimensional Yang-Mills Theory. Nucl. Phys. B  437  (1995), 541.

\bibitem[GSV] {GSV}  M. Gekhtman, M. Shapiro, and A. Vainshtein. On the properties of the exchange graph of a cluster algebra. Math. Res. Lett. 15 (2008), 321--330. arXiv: math.CO/0703151.

\bibitem[H] {Hag}  J. Haglund. The q,t-Catalan Numbers and the Space of Diagonal Harmonics with an Appendix
on the Combinatorics of Macdonald Polynomials. AMS University Lecture Series Vol.41 (2008).

\bibitem[HJ] {HJ} T. Holm and P. J{\o}rgensen. On a cluster category of infinite Dynkin type, and the relation to triangulations of the infinity-gon. Math. Z, in press. arXiv: math.RT/0902.4125.

\bibitem[JP] {JP} P. J{\o}rgensen and Y. Palu. A Caldero-Chapoton map for infinite clusters. To appear in Trans. Amer. Math. Soc. arXiv: math.RT/1004.1343 

\bibitem[Kal] {Kal} G. Kalai. A simple way to tell a simple polytope from its graph. J. Comb. Th. (Ser. A) 49 (1988), 381--383

\bibitem[Kel] {Kel} B. Keller, Cluster algebras, quiver representations and triangulated categories, {\it preprint} (2008). arXiv: math.RT/0807.1960

\bibitem[Lee] {Lee} C. W. Lee. The associahedron and triangulations of the n-gon. European J. Combin. 10
(1989), no. 6, 551 -–560.

\bibitem[Lov] {Lov} L. Lov{\'a}sz. Combinatorial Problems and Exercises. Akad{\'e}miai Kiad{\'o} - North Holland, Budapest (1979)

\bibitem[S] {S} I. Saleh. On the automorphisms of cluster algebras, {\it preprint} (2010). arXiv: math.RT/1011.0894

\bibitem[Stan] {Stan} R. Stanley. Enumerative combinatorics. Vol. 2. Cambridge University Press, (1999).

\bibitem[Stas] {Stas} J. D. Stasheff, Homotopy associativity of H-spaces. I, II. Trans. Amer. Math. Soc. 108
(1963), 275--292, 293--312.

\end{thebibliography}
\end{document}